\documentclass[11pt]{amsart}

\usepackage{amsmath,amssymb,amsthm}
\usepackage{enumitem}
\usepackage{tikz}
\usetikzlibrary{arrows.meta}
\usepackage{tikz-cd}
\usepackage[hidelinks,colorlinks=true,linkcolor=blue,citecolor=blue]{hyperref}
\usepackage{geometry}
\theoremstyle{plain}
\newtheorem{theorem}{Theorem}[section]
\newtheorem{proposition}[theorem]{Proposition}
\newtheorem{lemma}[theorem]{Lemma}
\newtheorem{corollary}[theorem]{Corollary}

\theoremstyle{definition}
\newtheorem{definition}[theorem]{Definition}
\newtheorem{example}[theorem]{Example}
\newtheorem{remark}[theorem]{Remark}
\newtheorem{construction}[theorem]{Construction}
\newtheorem{algorithm}[theorem]{Algorithm}
\newtheorem{problem}[theorem]{Problem}

\DeclareMathOperator{\gldim}{gl.dim}
\DeclareMathOperator{\findim}{fin.dim}
\DeclareMathOperator{\pd}{pd}
\DeclareMathOperator{\rad}{rad}
\DeclareMathOperator{\Hom}{Hom}

\DeclareMathOperator{\soc}{soc}

\title[Syzygy-quiver formulas for special biserial algebras]{Syzygy-Quiver Formulas and Finitistic Dimensions\\ of Special Biserial Algebras}

\author{Shantanu Sardar}
\address{Department of Mathematics and Statistics, Shaanxi Normal University, Xi'an, China}
\email{shantanusardar17@gmail.com}

\subjclass[2020]{16E10, 16G20, 16S70, 18G20}
\keywords{Special biserial algebra, string algebra, global dimension, finitistic dimension, syzygy quiver}

\begin{document}

\begin{abstract}
Let $\Lambda$ be a finite dimensional algebra over a field $\mathbf{k}$.
We introduce the reachable syzygy quiver of $\Lambda$, a directed
multigraph whose vertices are the isomorphism classes of indecomposable
nonprojective direct summands of iterated syzygies of simple modules,
and whose arrows record nonprojective summands of first syzygies. The
projective dimension of a vertex is one plus its directed path height,
allowing the global and finitistic dimensions of $\Lambda$ to be read
off from this height function. This formula holds for every finite
dimensional algebra, but its states are arbitrary modules.

We show that when the first syzygy of an indecomposable module admits
a closed combinatorial description from the bound quiver, this state
space reduces to a smaller combinatorial quiver, making the global and
finitistic dimensions exact computable invariants. We carry this out
in three stages. Over a special biserial algebra, the Liu--Morin
closure theorem identifies the reachable syzygy quiver with a
string-syzygy quiver whose vertices are string modules. Over a string
algebra, this compresses to the relation-chain quiver of Xin and
Zhang, recovering their global dimension formula along with a
parallel formula for the finitistic dimension. Over an arbitrary
biserial algebra, the universal formula holds, but uniform compression
to paths or strings is impossible, with the Skowro\'nski--Waschb\"usch
algebra providing a witness. Finally, we show two string algebras may
share the same Gabriel quiver but have distinct global dimensions,
proving the ideal of relations is indispensable for these invariants,
and we prove that a pointwise homological reduction to a string
algebra on the same quiver preserving the monomials of the relations
is impossible.
\end{abstract}

\maketitle

\section{Introduction}\label{sec:introduction}

\subsection{Context and motivation}\label{subsec:context}

One of the classical homological invariants associated to a finite
dimensional algebra is its global dimension. For a finite dimensional
$\mathbf{k}$-algebra $\Lambda$, the global dimension
$\gldim\Lambda$ is defined as the supremum of the projective dimensions
of the finitely generated right $\Lambda$-modules, and by standard
homological algebra it suffices to take the supremum over the simple
right $\Lambda$-modules; we refer to \cite[Chapter~I]{ASS06} for the
standard background. Another classical invariant is the finitistic
dimension $\findim\Lambda$, defined as the supremum of the projective
dimensions of those right $\Lambda$-modules that admit a finite
projective resolution. It is clear that $\findim\Lambda \le
\gldim\Lambda$, with equality whenever $\gldim\Lambda < \infty$. The
finitistic dimension conjecture asserts that $\findim\Lambda < \infty$
for every finite dimensional algebra. This conjecture remains open in
general, although it has been resolved for many specific classes of
algebras.

The class of special biserial algebras occupies a central position in
the representation theory of finite dimensional algebras. Recall that
a bound quiver algebra $\Lambda = \mathbf{k}Q/I$ is special biserial
if every vertex of $Q$ is the source of at most two arrows and the
target of at most two arrows, and if for every arrow $\alpha$ there is
at most one arrow $\beta$ with $s(\beta) = t(\alpha)$ and
$\alpha\beta \notin I$, and at most one arrow $\gamma$ with
$t(\gamma) = s(\alpha)$ and $\gamma\alpha \notin I$. String algebras
are precisely the special biserial algebras whose ideal of relations
is generated by paths, and gentle algebras are the string algebras
satisfying an additional uniqueness condition on the relations. Every
special biserial algebra is biserial, in the sense that the radical of
every indecomposable projective left or right module is a sum of at
most two uniserial modules whose intersection is either zero or a
simple module. The converse is strictly false, as we discuss in
Section~\ref{sec:biserial}.

A theorem of Xin and Zhang expresses the global dimension of a
string algebra as the maximum length of a minimal relation chain
beginning with an arrow. A natural question is how far this philosophy
extends. In this paper we show that it extends to all special biserial
algebras, provided that the relation-chain quiver is upgraded to a
larger homological state graph whose vertices are string modules
rather than isolated paths. The key technical input is a closure
theorem of Liu and Morin, which states that the first syzygy of a
string module over a special biserial algebra decomposes into a
projective part and a finite direct sum of string modules; we recall
this result as Proposition~\ref{thm:liumorin} in
Section~\ref{sec:background}. This closure property allows us to
replace the universal module state space by a combinatorial state
space of string diagrams, rendering the global and finitistic
dimensions exact invariants computable directly from the bound quiver.

For a general biserial algebra the universal formula remains valid,
but no uniform combinatorial compression to paths or strings is
available. The Skowro\'nski--Waschb\"usch algebra serves as the
standard witness to this boundary: its global dimension is two, and
the first syzygy of the simple module at the source of one of its
arrows is an indecomposable module whose coefficient graph contains a
fundamental triangle, definitively proving that it is not a string
module. A general biserial algebra cannot be realised as an algebraic
quotient of a special biserial algebra: the class of special biserial
algebras is closed under admissible quotients, whereas the
Skowro\'nski--Waschb\"usch algebra is biserial but not special
biserial. We record the short argument in
Remark~\ref{rem:biserialquotient}.

\subsection{Main results}\label{subsec:mainresults}

Throughout, $\mathbf{k}$ is a field and $\Lambda$ is a finite
dimensional $\mathbf{k}$-algebra. We work with right modules, and we
compose paths from left to right. Let $Q = (Q_0, Q_1, s, t)$ be a
finite quiver and let $I \subseteq (\mathbf{k}Q^+)^2$ be an admissible
ideal, so that $\Lambda = \mathbf{k}Q/I$. We assume throughout that
$\Lambda$ is a split basic algebra. This assumption is automatic when
$\mathbf{k}$ is algebraically closed and entails no loss of generality
for the invariants we study, since the global and finitistic
dimensions are Morita invariants.

Our first result is valid for every finite dimensional algebra.

\begin{theorem}\label{thm:universal}
Let $\Lambda$ be a finite dimensional $\mathbf{k}$-algebra. Let
$\Gamma_\Omega(\Lambda)$ be the reachable syzygy quiver of $\Lambda$
(Definition~\ref{def:reachable}), whose vertices are the isomorphism
classes of indecomposable nonprojective right $\Lambda$-modules that
occur as direct summands of $\Omega_\Lambda^n S$ for some simple right
$\Lambda$-module $S$ and some $n \ge 0$, and whose arrow
multiplicities record the indecomposable nonprojective summands of the
first syzygy of each vertex. Let
$\Gamma_\Omega^{\mathrm{fin}}(\Lambda)$ be the finitistic syzygy
quiver (Definition~\ref{def:finitisticquiver}), whose vertices are the isomorphism classes of finitely generated indecomposable nonprojective right $\Lambda$-modules of finite projective dimension. Then the following hold.
\begin{enumerate}[label=\textup{(\arabic*)}]
\item For every vertex $[X]$ of $\Gamma_\Omega(\Lambda)$ one has
$\pd_\Lambda X = 1 + \operatorname{ht}([X])$.
\item $\gldim\Lambda = \sup\bigl(\{0\} \cup
\{1 + \operatorname{ht}([S]) : S \text{ a nonprojective simple right
} \Lambda\text{-module}\}\bigr)$.
\item $\findim\Lambda = \sup\bigl(\{0\} \cup \{1 + \operatorname{ht}([X]) :
[X] \in \Gamma_\Omega^{\mathrm{fin}}(\Lambda)\}\bigr)$.
\item $\gldim\Lambda < \infty$ if and only if
$\Gamma_\Omega(\Lambda)$ is finite and acyclic.
\end{enumerate}
\end{theorem}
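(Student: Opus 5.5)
We define $\operatorname{ht}([X])$ as the supremum of lengths of directed paths in $\Gamma_\Omega(\Lambda)$ starting at $[X]$. It lies in $\mathbb{N}\cup\{\infty\}$, and is $0$ when $[X]$ is a sink. The plan is to prove (1) first; parts (2)--(4) are then formal consequences.

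For (1), everything rests on two standard facts. First, for a nonprojective module $X$ with minimal projective cover, $\pd X = 1 + \pd\Omega X$, where we set $\pd 0 = -1$ so that $\pd\Omega X=-1$ exactly when $X$ is projective. Second, projective dimension is additive in the form $\pd(\bigoplus Y_i)=\max_i \pd Y_i$.

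Write $\Omega X = P\oplus\bigoplus_i Y_i$ with $P$ projective and each $Y_i$ indecomposable nonprojective. By definition the arrows out of $[X]$ are exactly the $[Y_i]$, counted with multiplicity. Each $Y_i$ is again a summand of an iterated syzygy of a simple module, because $\Omega$ commutes with finite direct sums up to projective summands. So the vertex set is closed under these arrows. This gives the recursion
\[
\pd X = \begin{cases} 1 & \text{if } [X] \text{ is a sink},\\ 1+\max_i \pd Y_i & \text{otherwise.}\end{cases}
\]
The height function satisfies the same recursion, namely $\operatorname{ht}([X])=0$ at sinks and $\operatorname{ht}([X])=1+\max_i \operatorname{ht}([Y_i])$ otherwise, using that each vertex has only finitely many out-arrows.

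To conclude $\pd X = 1+\operatorname{ht}([X])$ we argue in two directions.

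\emph{Finite case.} Induct on $\pd X$. If $\pd X = n<\infty$, every $Y_i$ has $\pd Y_i\le n-1$, so by induction $\operatorname{ht}([Y_i])$ is finite and the recursion gives equality.

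\emph{Infinite case.} Suppose $\pd X=\infty$. Then some summand $Y_{i_1}$ of $\Omega X$ has $\pd Y_{i_1}=\infty$, and repeating this produces an infinite directed walk starting at $[X]$, so $\operatorname{ht}=\infty$. Conversely, if $\operatorname{ht}([X])=\infty$ then $\pd X$ cannot be finite, by the finite case just proved.

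The point needing care is the infinite case. Height can be infinite either through a cycle or through unbounded paths in an infinite graph, and the argument must not assume König-type finiteness. The two-directional induction above avoids that assumption.

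For (2), recall that $\gldim\Lambda=\sup_S \pd S$ over simple modules $S$. A projective simple contributes $0$, and every nonprojective simple is itself a vertex (take $n=0$), so (1) applies to it. For (3), every module of finite projective dimension is a finite direct sum of indecomposable modules of finite projective dimension, and $\pd$ of a sum is the maximum over its summands. So $\findim\Lambda$ is the supremum over indecomposable nonprojective modules of finite projective dimension, together with $0$. Applying the recursion (1) to such modules requires that their syzygy summands are also vertices of $\Gamma_\Omega^{\mathrm{fin}}(\Lambda)$. This holds because summands of $\Omega X$ have finite projective dimension, and we read $\operatorname{ht}$ in that quiver, where the same argument as in (1) goes through verbatim.

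For (4), first suppose $\Gamma_\Omega(\Lambda)$ is finite and acyclic. Then all heights are bounded by the number of vertices, so $\gldim\Lambda<\infty$ by (2). Conversely, suppose $\gldim\Lambda=d<\infty$. A cycle would give a vertex of infinite height, contradicting (1), so the quiver is acyclic. For finiteness, each vertex is a summand of $\Omega^n S$ with $n\le d$, since $\Omega^{n}S$ is projective for $n\ge d$. There are finitely many simple modules $S$, and each $\Omega^nS$ has finitely many indecomposable summands by Krull--Schmidt, so there are only finitely many vertices.

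The main obstacle is making sure the conventions are consistent. The convention $\pd 0=-1$, the treatment of sinks, and multiplicities versus supremum all have to line up; multiplicities do not matter for heights. The infinite-height direction of (1) in an infinite quiver also requires care, and in (3) one must verify closure of $\Gamma_\Omega^{\mathrm{fin}}$ under the syzygy arrows.
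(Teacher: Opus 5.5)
Your proof is correct, but it is organised differently from the paper's. The paper first proves an $n$-step correspondence (Lemma~\ref{lem:pathsyzygy}). It says that directed paths of length $n$ from $[X]$ to $[Y]$ exist exactly when $Y$ is an indecomposable nonprojective summand of $\Omega_\Lambda^n X$. Part (1) is then read off directly: $\pd_\Lambda X$ is the least $n$ for which $\Omega_\Lambda^n X$ is projective, which is the least $n$ with no directed path of length $n$ from $[X]$. This treats finite and infinite height in one stroke. You instead use the one-step recursions $\pd_\Lambda X = 1+\max_i \pd_\Lambda Y_i$ and $\operatorname{ht}([X]) = 1+\max_i\operatorname{ht}([Y_i])$. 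You match them by induction on $\pd_\Lambda X$ when it is finite, and handle $\pd_\Lambda X=\infty$ by extracting an infinite chain of summands of infinite projective dimension. This costs you the standard facts $\pd_\Lambda X = 1+\pd_\Lambda \Omega_\Lambda X$ and that projective dimension of a finite sum is the maximum. In exchange you avoid the $n$-step lemma, and your finite-case induction transfers verbatim to $\Gamma_\Omega^{\mathrm{fin}}(\Lambda)$ for (3), which is essentially the content of Lemma~\ref{lem:findimvertexfiniteheight}. In (4) your arguments are a bit more direct than the paper's. For finiteness, you take the indecomposable summands of the finitely many modules $\Omega_\Lambda^n S$ with $n<d$ (your bound $n\le d$ is harmless slack); the paper instead grows the layers $W_k$ using finite out-degree. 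For acyclicity, you argue that a vertex on a cycle would have infinite projective dimension, which is impossible because $\gldim\Lambda$ bounds the projective dimension of every module. The paper instead first shows that the cycle is reachable from a simple state. Strictly, that step contradicts (1) together with this global bound, not (1) alone, as your wording suggests.
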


The universal formula is formally exact for every finite dimensional
algebra, but its states are arbitrary modules. It becomes a
combinatorial theorem only for a class of algebras on which the states
and the syzygy transitions admit an effective description from the
bound quiver. Special biserial algebras harbour exactly the required
closure property.

\begin{theorem}\label{thm:specialbis}
Let $\Lambda = \mathbf{k}Q/I$ be a finite dimensional special
biserial algebra. Let $\Gamma_\Omega^{\mathrm{str}}(\Lambda)$ be the
string-syzygy quiver (Definition~\ref{def:stringsyzygyquiver}), whose
vertices are the isomorphism classes of indecomposable nonprojective
string modules that occur as direct summands of $\Omega_\Lambda^n S$
for some simple string module $S$ and some $n \ge 0$, and whose arrow
multiplicities record the indecomposable nonprojective string summands
of the first syzygy of each vertex. Then the following hold.
\begin{enumerate}[label=\textup{(\arabic*)}]
\item For every vertex $[X]$ of
$\Gamma_\Omega^{\mathrm{str}}(\Lambda)$ one has
$\pd_\Lambda X = 1 + \operatorname{ht}([X])$.
\item $\gldim\Lambda = \sup\bigl(\{0\} \cup
\{1 + \operatorname{ht}([S_i]) : S_i \text{ a nonprojective simple
string module}\}\bigr)$.
\item $\findim\Lambda \ge \sup\bigl(\{0\} \cup
\{1 + \operatorname{ht}([X]) : [X] \in
\Gamma_\Omega^{\mathrm{str,fin}}(\Lambda)\}\bigr)$
(Definition~\ref{def:finitisticstringsyzygy}), with equality whenever
$\gldim\Lambda < \infty$.
\item $\gldim\Lambda < \infty$ if and only if
$\Gamma_\Omega^{\mathrm{str}}(\Lambda)$ is finite and acyclic.
\end{enumerate}
\end{theorem}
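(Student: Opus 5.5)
The plan is to deduce Theorem~\ref{thm:specialbis} from Theorem~\ref{thm:universal} by showing that, over a special biserial algebra, the string-syzygy quiver $\Gamma_\Omega^{\mathrm{str}}(\Lambda)$ and the reachable syzygy quiver $\Gamma_\Omega(\Lambda)$ coincide, with the same arrows and multiplicities. The argument is an induction on $n$ for the summands of $\Omega^n_\Lambda S$.

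\emph{Step 1: every reachable vertex is a string module.} Every simple module $S_i$ is the string module of the trivial string $e_i$. Suppose every indecomposable nonprojective summand of $\Omega^n S$ is a string module. Proposition~\ref{thm:liumorin} says that the first syzygy of a string module is a projective module plus a finite direct sum of string modules. Applying it to each such summand shows that every nonprojective indecomposable summand of $\Omega^{n+1}S$ is again a string module. Here I use that $\Omega$ commutes with finite direct sums up to projective summands, and that $\Omega$ of a projective is zero.

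\emph{Step 2: the arrows agree.} Since the vertex sets are equal, it remains to compare arrows. In both quivers, the arrows out of $[X]$ are counted by the indecomposable nonprojective summands of $\Omega X$. By Krull--Schmidt, these are exactly the nonprojective string summands. So the two quivers are equal as multigraphs, and the height function $\operatorname{ht}$ is the same on both.

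\emph{Step 3: consequences.} Parts (1), (2) and (4) now follow by transport from parts (1), (2) and (4) of Theorem~\ref{thm:universal}, because both the vertex sets and the heights agree.

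For part (3), I take $\Gamma_\Omega^{\mathrm{str,fin}}(\Lambda)$ to be the full subquiver of vertices with finite projective dimension (or of finite height). Each such vertex $[X]$ is a module of finite projective dimension with $\pd X = 1+\operatorname{ht}([X])$. This gives the inequality $\findim\Lambda \ge \sup(\cdots)$.

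Equality when $\gldim\Lambda<\infty$ comes from the fact that then $\findim\Lambda=\gldim\Lambda$. By part (2), $\gldim\Lambda$ is attained as $1+\operatorname{ht}([S_i])$ at some simple $S_i$. That $S_i$ has finite projective dimension, so $[S_i]$ lies in the finitistic subquiver and the supremum is achieved. The degenerate case $\gldim\Lambda=0$ is covered by the $\{0\}$ term.

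\emph{Main obstacle.} The delicate point is Step 1. It needs the Liu--Morin closure to hold exactly as stated for all string modules over an arbitrary special biserial algebra, including the non-monomial relations coming from binomial commutativity relations. In that setting the syzygy of a string module can contain the projective-injective band-free modules $P$ with $\rad P/\soc P$ biserial. My plan is to check that these modules are projective, so they are absorbed into the projective part. I also need to confirm that no band modules ever appear, which is precisely what Proposition~\ref{thm:liumorin} asserts.
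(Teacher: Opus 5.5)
\paragraph{Parts (1), (2) and (4).} Your treatment here is the paper's argument. Liu--Morin closure plus additivity of $\Omega_\Lambda$ (Corollary~\ref{cor:stringclosure}) identifies $\Gamma_\Omega(\Lambda)$ with $\Gamma_\Omega^{\mathrm{str}}(\Lambda)$ as multigraphs (Proposition~\ref{prop:universalequalsstring}), and the universal height theorem is then transported.

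The ``main obstacle'' you flag is not really one. The non-uniserial projective-injectives of Theorem~\ref{thm:butlerringel} are projective by definition, so they fall into the summand $P$. The absence of band summands is part of Proposition~\ref{thm:liumorin}, which the paper also uses as a black box.

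\paragraph{Part (3).} This is where the gap lies. You take $\Gamma_\Omega^{\mathrm{str,fin}}(\Lambda)$ to be the full subquiver of the \emph{reachable} quiver $\Gamma_\Omega^{\mathrm{str}}(\Lambda)$ on vertices of finite projective dimension. Definition~\ref{def:finitisticstringsyzygy} is broader: its vertices are \emph{all} indecomposable nonprojective string modules of finite projective dimension, whether or not they occur as summands of a syzygy of a simple module (cf.\ Remark~\ref{rem:findimspecialbisreachable}). Two problems follow:
\begin{itemize}
\item Your inequality is a supremum over fewer vertices, so it is weaker than the one claimed.
\item Your justification of $\pd_\Lambda X = 1 + \operatorname{ht}([X])$ comes from part (1), which says nothing about string modules that are not reachable from a simple.
\end{itemize}

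The missing step is the paper's Lemma~\ref{lem:findimvertexfiniteheightstr}. Let $X$ be a vertex of $\Gamma_\Omega^{\mathrm{str,fin}}(\Lambda)$. By Proposition~\ref{thm:liumorin}, every indecomposable nonprojective summand of $\Omega_\Lambda X$ is a string module, and it has projective dimension at most $\pd_\Lambda X - 1$. Hence $\Gamma_\Omega^{\mathrm{str,fin}}(\Lambda)$ is a full subquiver of $\Gamma_\Omega^{\mathrm{fin}}(\Lambda)$ closed under out-neighbours, and the directed paths starting at $[X]$ are the same in both quivers. Lemma~\ref{lem:findimvertexfiniteheight} then gives $\pd_\Lambda X = 1 + \operatorname{ht}([X])$ for every vertex; its path--syzygy argument never uses reachability from a simple.

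With this in place, your equality argument for $\gldim\Lambda < \infty$ goes through unchanged. The nonprojective simples are vertices of the larger quiver too, and the inequality already bounds the supremum by $\findim\Lambda = \gldim\Lambda$.
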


In the strictly monomial case, the string-syzygy quiver compresses to
a smaller quiver whose vertices are nonzero paths, and we recover the
relation-chain formula of Xin and Zhang.

\begin{theorem}\label{thm:string}
Let $\Lambda = \mathbf{k}Q/I$ be a finite dimensional string algebra,
and let $\mathcal{R}(\Lambda)$ be its relation-chain quiver
(Definition~\ref{def:relationchainquiver}), whose vertices are the
nonzero paths reached from arrows by taking minimal right
annihilators, and whose arrows record these annihilators. Then
\[
\gldim\Lambda = \sup\bigl(\{0\} \cup
\{1 + \operatorname{ht}_{\mathcal{R}(\Lambda)}(\alpha) :
\alpha \in Q_1\}\bigr),
\]
and
\[
\findim\Lambda \ge \sup\bigl(\{0\} \cup
\{1 + \operatorname{ht}_{\mathcal{R}(\Lambda)}(p) :
p \in \mathcal{R}(\Lambda) \text{ with }
\operatorname{ht}_{\mathcal{R}(\Lambda)}(p) < \infty\}\bigr),
\]
with equality whenever $\gldim\Lambda < \infty$. Moreover,
$\gldim\Lambda < \infty$ if and only if $\mathcal{R}(\Lambda)$
contains no directed cycle reachable from an arrow.
\end{theorem}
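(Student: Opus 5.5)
The plan is to deduce Theorem~\ref{thm:string} from Theorem~\ref{thm:specialbis} by building a height-preserving correspondence between the relation-chain quiver $\mathcal{R}(\Lambda)$ and the string-syzygy quiver $\Gamma_\Omega^{\mathrm{str}}(\Lambda)$. Over a string algebra, $I$ is generated by paths, so every indecomposable projective $e_i\Lambda$ is a string module of the form $\alpha_1\cdots\alpha_r \leftrightarrow$ at most two maximal paths starting at $i$. Its radical is a direct sum of at most two uniserial modules $\alpha\Lambda$, one for each arrow $\alpha$ with $s(\alpha)=i$. Hence $\Omega S_i \cong \bigoplus_{s(\alpha)=i}\alpha\Lambda$. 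The first step is the standard monomial computation: for a nonzero path $p$, the uniserial module $p\Lambda$ has projective cover $e_{t(p)}\Lambda \to p\Lambda$, $x\mapsto px$. The kernel is the right annihilator of $p$ inside $e_{t(p)}\Lambda$. Because the algebra is a string algebra, at most one arrow $\beta$ with $s(\beta)=t(p)$ satisfies $p\beta\notin I$. So the kernel is $\bigoplus_q q\Lambda$, where $q$ runs over the minimal paths with $pq\in I$ (at most two of them). Each such $q\Lambda$ is again uniserial, given by a nonzero path. This is exactly the arrow rule of Definition~\ref{def:relationchainquiver}. I take from that definition that the arrows $p\to q$ are the minimal right annihilators with $q\Lambda$ nonprojective.

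Next I define $\Phi\colon p\mapsto [p\Lambda]$ on the vertices of $\mathcal{R}(\Lambda)$ reachable from arrows. I then check that, after discarding projective summands, $\Phi$ sends the out-arrows of $p$ bijectively onto the out-arrows of $[p\Lambda]$ in $\Gamma_\Omega^{\mathrm{str}}(\Lambda)$, with multiplicities preserved. Distinct minimal annihilators $q\neq q'$ of $p$ begin with distinct arrows, so they give non-isomorphic summands unless the two paths coincide. Multiplicities are therefore fine even if $\Phi$ fails to be injective globally. What matters is that heights depend only on the module: if $p\Lambda\cong p'\Lambda$, then $p$ and $p'$ have isomorphic successor data. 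By induction on the height, using the recursion $\operatorname{ht}(v)=\sup(\{0\}\cup\{1+\operatorname{ht}(w): v\to w\})$ (with the value $\infty$ on infinite or cyclic branches), I obtain $\operatorname{ht}_{\mathcal{R}}(p)=\operatorname{ht}([p\Lambda])$. By Theorem~\ref{thm:specialbis}(1) this gives $\pd p\Lambda = 1+\operatorname{ht}_{\mathcal{R}}(p)$.

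For the global dimension, $\pd S_i = 1+\max_{s(\alpha)=i}\pd(\alpha\Lambda)$ when $S_i$ is nonprojective. This gives $\gldim\Lambda=\sup(1+\operatorname{ht}(\alpha))$ up to an off-by-one that I need to reconcile with the paper's normalisation of $\operatorname{ht}$. I expect this bookkeeping point, that is whether the arrow vertices count as the $\Omega^1$ level, to be the main obstacle. I will fix it by comparing with Theorem~\ref{thm:specialbis}(2) directly, via the arrows $[S_i]\to[\alpha\Lambda]$ in $\Gamma_\Omega^{\mathrm{str}}$. Projective $\alpha\Lambda$ contribute nothing, and the formula must then be read with the convention in Definition~\ref{def:relationchainquiver}.

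For the finitistic bound, each $p$ with finite height gives a module $p\Lambda$ of finite projective dimension $1+\operatorname{ht}(p)$, which yields the inequality. When $\gldim\Lambda<\infty$ we have $\findim=\gldim$, and the supremum over arrows is already included in the right-hand side, so equality follows. For the last assertion, an infinite height reachable from an arrow is equivalent, by finiteness of the vertex set of $\mathcal{R}(\Lambda)$ (there are finitely many nonzero paths), to a directed cycle reachable from an arrow. Combining this with the height identity and part~(4) of Theorem~\ref{thm:specialbis}, or directly with the global dimension formula, gives the equivalence.
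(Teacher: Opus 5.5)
There is a genuine gap. It sits exactly at the ``off-by-one'' you flag and then leave open, and it comes from misreading Definition~\ref{def:relationchainquiver}.

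The arrows of $\mathcal{R}(\Lambda)$ are $p\to u$ for \emph{every} minimal right annihilator $u$ of $p$, including those with $u\Lambda$ projective. These are kept as terminal sinks (Proposition~\ref{prop:relationchainsyzygy}, Remark~\ref{rem:compressionmap}). Consequently your central identity $\operatorname{ht}_{\mathcal{R}(\Lambda)}(p)=\operatorname{ht}_{\Gamma_\Omega^{\mathrm{str}}(\Lambda)}([p\Lambda])$ is false, and so is $\pd_\Lambda(p\Lambda)=1+\operatorname{ht}_{\mathcal{R}(\Lambda)}(p)$. The correct statement is $\pd_\Lambda(p\Lambda)=\operatorname{ht}_{\mathcal{R}(\Lambda)}(p)$ (Lemma~\ref{lem:pathidealsyzygyheight}). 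Equivalently, the height in $\mathcal{R}(\Lambda)$ is $1+\operatorname{ht}_{\Gamma_\Omega^{\mathrm{str}}(\Lambda)}([p\Lambda])$ when $p\Lambda$ is nonprojective, and $0$ when it is projective.

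Concretely, take $A=\mathbf{k}Q/(\alpha\beta)$ from Theorem~\ref{thm:twostring}. Here $\mathcal{R}(A)$ is $\alpha\to\beta$, with $\beta$ terminal because $\beta A\cong P_1$. Under your reading this arrow disappears, so $\operatorname{ht}(\alpha)=0$ and the formula gives $1$ instead of $\gldim A=2$. Your identity would also assert $\pd_A(\beta A)=1$ for a projective module.

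With the correct identity, you can argue directly by induction from $\Omega_\Lambda(p\Lambda)\cong\bigoplus_{u\in\mathcal{M}(p)}u\Lambda$. Unfold $\Omega^k_\Lambda(p\Lambda)$ over the length-$k$ paths of $\mathcal{R}(\Lambda)$ starting at $p$; this also handles infinite heights, which a plain induction on height does not. The global dimension formula then follows at once from $\pd_\Lambda S_v=1+\max_{s(\alpha)=v}\pd_\Lambda(\alpha\Lambda)$. No detour through Theorem~\ref{thm:specialbis} is needed, and this direct unfolding is how the paper argues.

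The same error breaks your finitistic bound. Since $p\Lambda$ has projective dimension $\operatorname{ht}_{\mathcal{R}(\Lambda)}(p)$, not $1+\operatorname{ht}_{\mathcal{R}(\Lambda)}(p)$, it is the wrong witness. You need a module whose first syzygy is $p\Lambda$. The paper takes $M_p=P_{s(p)}/p\Lambda$. Because $p\Lambda\subseteq\rad P_{s(p)}$, the projection $P_{s(p)}\to M_p$ is a projective cover, so $\pd_\Lambda M_p=1+\operatorname{ht}_{\mathcal{R}(\Lambda)}(p)$.

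A minor point: distinct minimal annihilators need not give non-isomorphic summands. For example, $\beta_1\Lambda'\cong\beta_2\Lambda'$ in Example~\ref{ex:stringnotgentle}. This is harmless for heights, but your multiplicity claim as stated is wrong.

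Once these corrections are made, the rest of your argument goes through. That covers the equality when $\gldim\Lambda<\infty$, and the cycle criterion via finiteness of $\mathcal{R}(\Lambda)$.
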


For general biserial algebras, the universal formula of
Theorem~\ref{thm:universal} remains valid, but no uniform
combinatorial compression is known. The Skowro\'nski--Waschb\"usch
algebra serves as the standard obstruction.

\begin{theorem}\label{thm:biserial}
Let $\Lambda$ be a finite dimensional biserial algebra. Then the
following hold.
\begin{enumerate}[label=\textup{(\arabic*)}]
\item The universal formula of Theorem~\ref{thm:universal} holds for
$\Lambda$.
\item The ordinary Gabriel quiver does not determine
$\gldim\Lambda$, even within the class of string algebras.
\item If $\mathbf{k}$ is algebraically closed and $\Lambda$ is
representation finite, then $\Lambda$ is Morita equivalent to a
special biserial algebra, and Theorem~\ref{thm:specialbis} applies
after passing to the basic representative.
\item If $\mathbf{k}$ is algebraically closed, every basic biserial
algebra admits a bisected presentation in the sense of
\cite{VFCB98}, and in particular a bound quiver presentation;
consequently the syzygy transitions can be computed from this
presentation by linear algebra.
\item There exists a representation-infinite biserial algebra for
which no uniform compression of the reachable syzygy quiver to a
path-state or string-state quiver is possible.
\end{enumerate}
\end{theorem}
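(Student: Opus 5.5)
The statement is a conjunction of five largely independent claims, so I will prove them one at a time, each from earlier results or an explicit example.

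Part (1) is immediate. Theorem~\ref{thm:universal} is stated for an arbitrary finite dimensional algebra, and a biserial algebra is one.

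For part (2) I will exhibit two string algebras on the same quiver with different global dimensions.
\begin{itemize}
\item Take the linear quiver $1\xrightarrow{\alpha}2\xrightarrow{\beta}3\xrightarrow{\gamma}4$.
\item Let $\Lambda_1=\mathbf{k}Q/(\alpha\beta)$ and $\Lambda_2=\mathbf{k}Q/(\alpha\beta,\beta\gamma)$. Both are monomial and satisfy the special biserial conditions, so both are string algebras.
\item I will compute the projective resolutions of the simples directly, or equivalently use Theorem~\ref{thm:string} and compute minimal relation chains.
\item In $\Lambda_1$ the chain $\alpha,\beta\gamma$ stops after one step. In $\Lambda_2$ the chain $\alpha,\beta,\gamma$ has length three.
\item This gives $\gldim\Lambda_1=2$ and $\gldim\Lambda_2=3$.
\end{itemize}
A direct check: in $\Lambda_2$ one gets $\Omega S_1=S_2$, $\Omega S_2=S_3$ and $\Omega S_3=S_4=P_4$. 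Hence $\pd S_1=3$.

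For part (3) I will invoke the known classification of representation-finite biserial algebras over an algebraically closed field. Skowroński and Waschbüsch proved that such algebras are special biserial up to Morita equivalence, i.e.\ their basic algebras are special biserial. Global and finitistic dimensions are Morita invariant, so Theorem~\ref{thm:specialbis} applies to the basic representative.

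Part (4) is a citation of \cite{VFCB98}. Every basic biserial algebra over an algebraically closed field has a bisected presentation, which in particular gives a bound quiver presentation $\mathbf{k}Q/I$ with explicit generators of $I$. Given a module as a representation, computing its projective cover and kernel is then finite-dimensional linear algebra. This makes each arrow of $\Gamma_\Omega(\Lambda)$ effectively computable.

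Part (5) is the substantive one; I expect it to be the main obstacle. I will use the Skowroński--Waschbüsch algebra, which is representation-infinite, biserial and not special biserial.
\begin{itemize}
\item Find the vertex $i$ at the source of the distinguished arrow, and compute $\Omega S_i$ explicitly as a representation.
\item Show that $\Omega S_i$ is indecomposable, for instance by computing its endomorphism ring and showing it is local.
\item Show that $\Omega S_i$ is not a string module. Every string module has a tree-like coefficient graph, with no cycles in its basis-arrow graph for any basis. The fundamental triangle is a cycle that persists under every change of basis. I would prove this persistence via a dimension or rank invariant, for example the rank of a composite map along the triangle, which is zero on every string module.
\item Conclude that $[\Omega S_i]$ is a vertex of $\Gamma_\Omega(\Lambda)$ that is neither a path state nor a string state.
\end{itemize}
Any uniform compression would have to assign this vertex such a state, which is a contradiction. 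The delicate point is the basis-independence of the non-string property, so the rank invariant is the step I would work hardest to make rigorous.
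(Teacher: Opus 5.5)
Your outline matches the paper's proof. Part (1) is Corollary~\ref{cor:universalbiserial}, parts (3) and (4) are the same citations used in Theorem~\ref{thm:whatisknown}, and part (5) uses the same algebra $C$ and the same module $\Omega_C S_1 = aC$. Part (2) differs: the paper uses the oriented $2$-cycle with $\mathbf{k}Q/(\alpha\beta)$ and $\mathbf{k}Q/(\alpha\beta,\beta\alpha)$ (Theorem~\ref{thm:twostring}), while you use a pair on $A_4$. Your pair is correct, with values $2$ and $3$. The paper's pair, with values $2$ and $\infty$, gives the stronger fact that not even finiteness of $\gldim$ is determined by the Gabriel quiver, and it is reused for Corollary~\ref{cor:notderived}. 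There is one slip in your chain for $\Lambda_1$. There $\mathcal{M}(\alpha)=\{\beta\}$, not $\{\beta\gamma\}$, because the proper initial subpath $\beta$ already kills $\alpha$. Also $\mathcal{M}(\beta)=\emptyset$ since $\beta\gamma\neq 0$, so the chain is $(\alpha,\beta)$ and your value $2$ stands.

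In part (5), the step you expect to be hardest is immediate, and the invariant you propose for it does not exist in the stated generality. Write $z=ab=acd$. The weight spaces of $aC=\langle a,ac,z\rangle$ are $aCe_2=\mathbf{k}a$, $aCe_3=\mathbf{k}\,ac$ and $aCe_4=\mathbf{k}z$, and all others are zero. So every homogeneous basis is a rescaling of $\{a,ac,z\}$, and its coefficient graph is always the triangle $a\xrightarrow{c}ac\xrightarrow{d}z$, $a\xrightarrow{b}z$; this is the paper's argument.

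By contrast, no composite along the triangle vanishes on every string module. Take the diagram $x_1\xrightarrow{c}x_2\xrightarrow{d}x_3\xleftarrow{b}x_4\xrightarrow{c}x_5\xrightarrow{d}x_6$. It is supported on $\{2,3,4\}$, hence killed by $be$ and $ab-acd$, so it gives a string module. On it, $b$ and $cd$ are both nonzero on $z_{x_4}$, and $\operatorname{Im} b\subseteq \operatorname{Im} cd$. A rank argument works only once you fix the dimension vector $(0,1,1,1,0)$. A three-vertex string diagram has two edges, so one of $b,c,d$ acts by zero, whereas all three act nonzero on $aC$. Likewise, a string module can have cycles in the coefficient graph for some bases; only one homogeneous basis needs to give a path.

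Some smaller points on part (5):
\begin{itemize}
\item Indecomposability is free: $aC$ is cyclic with simple top, indeed uniserial.
\item You assert rather than check that $C$ is biserial and representation-infinite. The paper computes the radicals of all left and right indecomposable projectives. It gets representation-infiniteness over every field from $C/C(e_1+e_5)C$ being the path algebra of the $\widetilde{A}_2$ quiver on $\{2,3,4\}$.
\item Do not claim $aC$ is not a path state: it is the principal ideal of the path $a$. As in the paper, only its failure to be a string module is used.
\end{itemize}
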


\subsection{Contents of sections}\label{subsec:contents}

The paper is organised as follows. In Section~\ref{sec:background} we
recall the background material on bound quivers, syzygies, projective
dimension, finitistic dimension, string algebras, special biserial
algebras, biserial algebras, string modules, and the Liu--Morin
closure theorem. In Section~\ref{sec:gabriel} we show that the
ordinary Gabriel quiver does not suffice to determine the global
dimension, even among string algebras. In Section~\ref{sec:universal}
we introduce the reachable syzygy quiver of an arbitrary finite
dimensional algebra and prove the universal height formula. In
Section~\ref{sec:stringsyzygy} we specialise to special biserial
algebras: using the Liu--Morin closure theorem we identify the
universal syzygy quiver with the string-syzygy quiver, prove
Theorem~\ref{thm:specialbis}, and construct an algorithm that computes
the global dimension from the bound quiver. In
Section~\ref{sec:stringalgebra} we treat string algebras, introduce
minimal path annihilators and the relation-chain quiver, and prove
Theorem~\ref{thm:string} as a compression of the string-syzygy
formula. In Section~\ref{sec:binomial} we explain why the path-state
compression of Section~\ref{sec:stringalgebra} is not available in the
presence of binomial relations, using the commutativity diamond as
the canonical paradigm, and we prove a negative result on pointwise
monomial reduction preserving the relations. In
Section~\ref{sec:biserial} we discuss general biserial algebras,
prove Theorem~\ref{thm:biserial}, and use the
Skowro\'nski--Waschb\"usch algebra as a witness that string states are
insufficient. In Section~\ref{sec:examples} we collect several
examples, accompanied by quiver diagrams, illustrating the strict
inclusions
\[
\text{gentle} \subsetneq \text{string} \subsetneq
\text{special biserial} \subsetneq \text{biserial}.
\]
In Section~\ref{sec:future} we state several open problems for future
research.

\section{Background}\label{sec:background}

In this section we recall the background material on bound quivers,
syzygies, and the four classes of algebras that we study. We
introduce bound quiver algebras in
Section~\ref{subsec:boundquivers}, recall projective covers and
syzygies in Section~\ref{subsec:syzygies}, and define the global and
finitistic dimensions in Section~\ref{subsec:gldimdef}. In
Section~\ref{subsec:specialbis} we recall the definitions of string,
special biserial, gentle, and biserial algebras, and we record the
strict inclusions between them. Section~\ref{subsec:liumorin}
introduces string modules and states the Liu--Morin closure theorem,
which is the key technical input for the construction of the
string-syzygy quiver in Section~\ref{sec:stringsyzygy}.

\subsection{Bound quivers and path algebras}\label{subsec:boundquivers}

Throughout, $\mathbf{k}$ is a field and $\Lambda$ is a finite
dimensional $\mathbf{k}$-algebra. A quiver is a quadruple
$Q = (Q_0, Q_1, s, t)$ where $Q_0$ is the set of vertices, $Q_1$ is
the set of arrows, and $s, t : Q_1 \to Q_0$ denote the source and
target functions. We assume throughout that $Q_0$ and $Q_1$ are
finite. We use small roman letters $v, w$, possibly with numerical
subscripts, to denote vertices, and small roman letters $a, b, c, d$
or Greek letters $\alpha, \beta$, possibly with numerical subscripts,
to denote arrows.

A path in $Q$ of positive length is a finite sequence
$a_1 a_2 \ldots a_n$ of elements of $Q_1$ with $n > 0$ such that
$s(a_{j+1}) = t(a_j)$ for each $1 \le j < n$. In addition, there is
a lazy path $e_v$ of length $0$ corresponding to each $v \in Q_0$.
For technical reasons we compose paths from left to right throughout,
so that $\alpha\beta$ denotes the path that first traverses $\alpha$
and then $\beta$. The source and target of a path
$a_1 a_2 \ldots a_n$ are $s(a_1)$ and $t(a_n)$ respectively. The
path algebra $\mathbf{k}Q$ is the $\mathbf{k}$-algebra whose
underlying $\mathbf{k}$-vector space has as a basis the set of all
paths in $Q$, and whose multiplication is induced by path
concatenation: the product $pq$ of two paths $p$ and $q$ is the
concatenation $pq$ if $t(p) = s(q)$, and is zero otherwise.

An ideal $I$ of $\mathbf{k}Q$ is admissible if there exists an
integer $m \ge 2$ such that
\[
(\mathbf{k}Q^+)^m \subseteq I \subseteq (\mathbf{k}Q^+)^2,
\]
where $\mathbf{k}Q^+$ denotes the arrow ideal, that is, the
two-sided ideal generated by the arrows of $Q$. We call the pair
$(Q, I)$ a bound quiver and the quotient algebra
$\Lambda = \mathbf{k}Q/I$ a bound quiver algebra. Since $I$ is
admissible, $\Lambda$ is finite dimensional.

We work with right $\Lambda$-modules. For $v \in Q_0$, the
corresponding indecomposable projective right $\Lambda$-module is
$P_v = e_v \Lambda$, and the corresponding simple right
$\Lambda$-module is $S_v = P_v / \rad P_v$, where $\rad P_v$
denotes the radical of $P_v$. We abbreviate $\rad \Lambda$ for the
Jacobson radical of $\Lambda$ considered as a right module over
itself.

For an algebra $\Lambda$ that is not given by a bound quiver, we say
that $\Lambda$ is basic if $\Lambda / \rad \Lambda$ is a product of
copies of division rings, and split basic if it is a product of
copies of $\mathbf{k}$. Over an algebraically closed field every
finite dimensional algebra is Morita equivalent to a split basic
algebra, and every split basic algebra is isomorphic to a bound
quiver algebra. Since both the global and the finitistic dimension
are Morita invariants, we may assume throughout that
$\Lambda = \mathbf{k}Q/I$ is split basic. Over an algebraically
closed field this entails no loss of generality; over a general
field we impose this as an explicit hypothesis, or extend scalars
when only the global dimension is concerned.

\subsection{Projective covers, syzygies, and projective dimension}
\label{subsec:syzygies}

Let $\Lambda$ be a finite dimensional $\mathbf{k}$-algebra and let
$M$ be a finitely generated right $\Lambda$-module. A projective
cover of $M$ is a pair $(P(M), \pi_M)$ where $P(M)$ is a projective
right $\Lambda$-module and $\pi_M : P(M) \to M$ is a surjective
$\Lambda$-module homomorphism with superfluous kernel
$\ker \pi_M \subseteq \rad P(M)$. Projective covers exist and are unique up to isomorphism whenever $\Lambda$ is an Artinian ring; we
refer to \cite[Chapter~I]{ASS06} for the standard construction.

The first syzygy of $M$ is defined as
\[
\Omega_\Lambda M := \ker \pi_M,
\]
and the $n$-th syzygy is defined inductively by
$\Omega_\Lambda^0 M = M$ and
$\Omega_\Lambda^{n+1} M = \Omega_\Lambda(\Omega_\Lambda^n M)$ for
$n \ge 0$.

The projective dimension of $M$, denoted $\pd_\Lambda M$, is the
smallest nonnegative integer $n$ such that $\Omega_\Lambda^n M$ is
projective, and is defined to be $\infty$ if no such $n$ exists. We
adopt the convention $\pd_\Lambda 0 = -\infty$, although the zero
module will not appear as a state in the quivers we construct below.

The following two lemmas are standard homological facts that will be
used repeatedly.

\begin{lemma}\label{lem:simplegldim}
Let $\Lambda$ be a finite dimensional $\mathbf{k}$-algebra and let
$S_1, \ldots, S_r$ be a complete set of pairwise nonisomorphic
simple right $\Lambda$-modules. Then
\[
\gldim\Lambda = \sup_{1 \le i \le r} \pd_\Lambda S_i.
\]
\end{lemma}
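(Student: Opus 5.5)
The plan is to prove the two inequalities separately, using the long exact Ext sequence and induction on the length of a module. Set $d := \sup_i \pd_\Lambda S_i$. Since each $S_i$ is a finitely generated module, $d \le \gldim\Lambda$ holds by definition. For the reverse inequality we may assume $d<\infty$. It then suffices to show $\pd_\Lambda M \le d$ for every finitely generated right module $M$. We will use the standard characterisation: $\pd_\Lambda M \le n$ if and only if $\mathrm{Ext}^{n+1}_\Lambda(M,-)=0$. This follows by dimension shifting, $\mathrm{Ext}^{n+1}(M,N)\cong\mathrm{Ext}^1(\Omega^n M,N)$. Vanishing for all $N$ forces $\Omega^n M$ to be projective, by taking $N=\Omega^{n+1}M$ and splitting the sequence $0\to\Omega^{n+1}M\to P\to\Omega^n M\to 0$.

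We then induct on the composition length $\ell(M)$, which is finite because $\Lambda$ is finite dimensional. If $\ell(M)=1$, then $M\cong S_i$ for some $i$ and there is nothing to prove. If $\ell(M)>1$, choose a simple submodule $S\subseteq M$; one exists since $\soc M\neq 0$. This gives a short exact sequence $0\to S\to M\to M/S\to 0$ with $\ell(M/S)<\ell(M)$. Applying $\Hom_\Lambda(-,N)$ yields the exact segment
\[
\mathrm{Ext}^{d+1}(M/S,N)\to\mathrm{Ext}^{d+1}(M,N)\to\mathrm{Ext}^{d+1}(S,N).
\]
Both outer terms vanish, the first by induction and the second by the hypothesis on $S$. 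Hence the middle term vanishes and $\pd_\Lambda M\le d$. This gives $\gldim\Lambda\le d$.

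A remaining point concerns convention. If $\gldim$ is meant as a supremum over all modules, not only finitely generated ones, we would invoke Auslander's theorem, which says the global dimension is attained on cyclic modules. For this paper, however, $\gldim$ is defined over finitely generated modules, so this step is unnecessary. The only genuinely nontrivial step is therefore the Ext characterisation of projective dimension, and it is entirely standard (\cite[Chapter~I]{ASS06}). An alternative that avoids Ext is the horseshoe lemma: it gives $\pd M\le\max(\pd S,\pd M/S)$ directly from the short exact sequence, and we would use it if we preferred to stay with syzygies as the paper does.
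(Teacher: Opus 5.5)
Your proof is correct and takes essentially the same route as the paper's. Both filter $M$ by submodules whose successive quotients are simple (you, via a simple submodule and induction on composition length) or semisimple (the paper, via the radical filtration $MJ^q$), and both use the long exact $\operatorname{Ext}$ sequence to get $\pd_\Lambda M \le \max\{\pd_\Lambda L, \pd_\Lambda N\}$ for any short exact sequence $0 \to L \to M \to N \to 0$. Your version also proves the characterisation $\pd_\Lambda M \le n \iff \operatorname{Ext}^{n+1}_\Lambda(M,-)=0$, which the paper uses without comment.
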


\begin{proof}
The inequality $\ge$ is clear. Let $n = \sup_i \pd_\Lambda S_i$ and
assume that $n < \infty$. Let $J = \rad \Lambda$. Since $\Lambda$
is finite dimensional, $J$ is nilpotent, so $J^N = 0$ for some
integer $N$. Every finitely generated right $\Lambda$-module $M$
admits the finite radical filtration
\[
M \supseteq MJ \supseteq MJ^2 \supseteq \cdots \supseteq MJ^N = 0.
\]
Each factor $MJ^q / MJ^{q+1}$ is a module over the semisimple
algebra $\Lambda/J$, hence a finite direct sum of simple modules.
Since a finite direct sum of projective resolutions of length at
most $n$ is a projective resolution of length at most $n$, every
semisimple module has projective dimension at most $n$.

For any short exact sequence $0 \to L \to M \to N \to 0$, the long
exact sequence in $\operatorname{Ext}$ gives
$\pd_\Lambda M \le \max\{\pd_\Lambda L, \pd_\Lambda N\}$. Indeed,
if $\pd_\Lambda L \le n$ and $\pd_\Lambda N \le n$, then
$\operatorname{Ext}_\Lambda^{i+1}(N, X) = 0$ and
$\operatorname{Ext}_\Lambda^{i+1}(L, X) = 0$ for all $i \ge n$,
which forces $\operatorname{Ext}_\Lambda^{i+1}(M, X) = 0$. Applying
this inequality successively upward through the radical filtration,
starting from the zero module at the bottom, shows that
$\pd_\Lambda M \le n$. Therefore $\gldim\Lambda \le n$.
\end{proof}

\begin{lemma}\label{lem:additivity}
For finitely generated right $\Lambda$-modules $M$ and $N$ one has
\[
\Omega_\Lambda(M \oplus N) \cong \Omega_\Lambda M \oplus
\Omega_\Lambda N.
\]
More generally, the first syzygy of a finite direct sum is the direct
sum of the first syzygies of the summands.
\end{lemma}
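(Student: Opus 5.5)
The plan is to use uniqueness of projective covers together with the behaviour of radicals under direct sums. Let $\pi_M : P(M) \to M$ and $\pi_N : P(N) \to N$ be projective covers. I will show that
\[
\pi_M \oplus \pi_N : P(M) \oplus P(N) \to M \oplus N
\]
is a projective cover of $M \oplus N$. Its kernel is then $\ker\pi_M \oplus \ker\pi_N$, and this kernel is $\Omega_\Lambda M \oplus \Omega_\Lambda N$ by definition.

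The key steps, in order, are as follows.

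\textbf{Step 1.} The module $P(M)\oplus P(N)$ is projective, and the map $\pi_M\oplus\pi_N$ is surjective because each component is surjective.

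\textbf{Step 2.} The kernel is superfluous. The kernel of $\pi_M\oplus\pi_N$ is visibly $\ker\pi_M\oplus\ker\pi_N$. Since $\ker\pi_M \subseteq \rad P(M)$ and $\ker\pi_N\subseteq \rad P(N)$, and
\[
\rad(P(M)\oplus P(N)) = \rad P(M)\oplus \rad P(N)
\]
(the radical of a module over an Artinian ring is $XJ$ with $J=\rad\Lambda$, and $(X\oplus Y)J = XJ\oplus YJ$), the kernel lies in the radical of the direct sum. So the kernel is superfluous, in the sense used in Section~\ref{subsec:syzygies}. This shows $(P(M)\oplus P(N), \pi_M\oplus\pi_N)$ is a projective cover.

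\textbf{Step 3.} Projective covers are unique up to isomorphism over $M\oplus N$, and this extends to an isomorphism of their kernels. So the syzygy $\Omega_\Lambda(M\oplus N)$, computed from any projective cover, is isomorphic to $\Omega_\Lambda M\oplus\Omega_\Lambda N$.

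\textbf{Step 4.} The general statement for a finite direct sum $M_1\oplus\cdots\oplus M_k$ follows by induction on $k$, applying the two-summand case to $(M_1\oplus\cdots\oplus M_{k-1})\oplus M_k$.

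There is no real obstacle. The only point needing care is Step 3: the syzygy is defined only up to isomorphism, so we need that any isomorphism of projective covers commuting with the maps to $M\oplus N$ restricts to an isomorphism of kernels. That holds because such an isomorphism $\phi$ satisfies $\pi'\phi=\pi$, so $\phi$ carries $\ker\pi$ onto $\ker\pi'$. Existence and uniqueness of projective covers over the Artinian ring $\Lambda$ are taken from \cite[Chapter~I]{ASS06}, as recalled above.
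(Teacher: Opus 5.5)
Your proposal is correct and follows the paper's proof: the direct sum of projective covers is a projective cover of $M \oplus N$, its kernel is $\Omega_\Lambda M \oplus \Omega_\Lambda N$, and induction on the number of summands gives the general case. You also spell out the identity $\rad(P(M)\oplus P(N)) = \rad P(M)\oplus\rad P(N)$ and the fact that the syzygy is well defined up to isomorphism, both of which the paper leaves implicit.
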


\begin{proof}
The direct sum of projective covers $P(M) \to M$ and $P(N) \to N$
is a projective cover for $M \oplus N$. Taking exact kernels gives
the assertion. The general statement follows by induction on the
number of summands.
\end{proof}

\subsection{Global and finitistic dimensions}\label{subsec:gldimdef}

Let $\Lambda$ be a finite dimensional $\mathbf{k}$-algebra. The
(right) global dimension of $\Lambda$ is
\[
\gldim\Lambda :=
\sup\{\pd_\Lambda M : M \text{ a finitely generated right }
\Lambda\text{-module}\}.
\]
For finite dimensional algebras this coincides with the usual
supremum over all right $\Lambda$-modules; see
\cite[Chapter~I]{ASS06}. By Lemma~\ref{lem:simplegldim}, it is the
supremum of the projective dimensions of the simple right
$\Lambda$-modules.

The finitistic dimension of $\Lambda$ is
\[
\findim\Lambda :=
\sup\{\pd_\Lambda M : M \text{ a finitely generated right }
\Lambda\text{-module with } \pd_\Lambda M < \infty\}.
\]
It is clear that $\findim\Lambda \le \gldim\Lambda$, with equality
whenever $\gldim\Lambda < \infty$. The finitistic dimension
conjecture asserts that $\findim\Lambda < \infty$ for every finite
dimensional algebra; we refer to \cite[Section~I.6]{ASS06} for the
classical formulation, and to \cite{EHIS04} for the study of the
finitistic dimension in related classes of algebras.

\begin{remark}\label{rem:gldimvsfindimbasic}
If $\gldim\Lambda < \infty$, then every finitely generated right
$\Lambda$-module has finite projective dimension, and hence the
finitistic dimension coincides with the global dimension. If
$\gldim\Lambda = \infty$, the finitistic dimension may be either
finite or infinite.
\end{remark}

\subsection{String, special biserial, and biserial algebras}
\label{subsec:specialbis}

In this subsection we recall the four classes of algebras that we
study. Throughout, $\Lambda = \mathbf{k}Q/I$ is a basic finite
dimensional algebra.

\begin{definition}\label{def:specialbis}
A basic finite dimensional algebra $\Lambda = \mathbf{k}Q/I$ is
special biserial if the following conditions hold.
\begin{enumerate}[label=\textup{(SB\arabic*)}]
\item Every vertex of $Q$ is the source of at most two arrows and the
target of at most two arrows.
\item For every arrow $\alpha \in Q_1$, there is at most one arrow
$\beta$ with $s(\beta) = t(\alpha)$ and $\alpha\beta \notin I$, and
at most one arrow $\gamma$ with $t(\gamma) = s(\alpha)$ and
$\gamma\alpha \notin I$.
\end{enumerate}
An arbitrary finite dimensional algebra is called special biserial
if it is Morita equivalent to a basic special biserial algebra.
\end{definition}

\begin{definition}\label{def:string}
A special biserial algebra $\Lambda = \mathbf{k}Q/I$ is a string
algebra if the ideal $I$ is generated by paths. Equivalently, the
presentation is monomial in addition to satisfying conditions
\textup{(SB1)} and \textup{(SB2)} of
Definition~\ref{def:specialbis}.
\end{definition}

\begin{definition}\label{def:gentle}
A string algebra $\Lambda = \mathbf{k}Q/I$ is a gentle algebra if
$I$ is generated by paths of length two and, for every arrow
$\alpha \in Q_1$, there is at most one arrow $\beta \in Q_1$ with
$s(\beta) = t(\alpha)$ and $\alpha\beta \in I$, and at most one
arrow $\gamma \in Q_1$ with $t(\gamma) = s(\alpha)$ and
$\gamma\alpha \in I$.
\end{definition}

Recall that a module is called uniserial if its submodule lattice is
a finite chain, that is, its submodules are totally ordered by
inclusion.

\begin{definition}\label{def:biserial}
A finite dimensional algebra $\Lambda$ is biserial if for every
indecomposable projective right module $P$ and every indecomposable
projective left $\Lambda$-module $L$, one can write
\[
\rad P = U + V, \qquad \rad L = U' + V',
\]
where $U, V, U', V'$ are uniserial modules and each of the
intersections $U \cap V$ and $U' \cap V'$ is either $0$ or simple. We
allow one of $U, V$ and one of $U', V'$ to be zero.
\end{definition}

The condition is stated for both left and right modules, following
\cite{SW83}.

The following strict inclusions follow from the definitions:
\[
\text{gentle} \subsetneq \text{string} \subsetneq
\text{special biserial} \subsetneq \text{biserial}.
\]
The first inclusion is proper, as Examples~\ref{ex:gentle} and
\ref{ex:stringnotgentle} in Section~\ref{sec:examples} show. The
second inclusion is proper, as the commutativity diamond in
Section~\ref{sec:binomial} shows. The third inclusion is proper, as
the Skowro\'nski--Waschb\"usch algebra of
Section~\ref{sec:biserial} shows.

\begin{remark}\label{rem:gabrielinsuff}
The Gabriel quiver $Q$ records the vertices and arrows but not the
ideal of relations $I$. It is therefore insufficient for the
computation of the global dimension. In Section~\ref{sec:gabriel} we
construct two string algebras with the same Gabriel quiver but with
different global dimensions.
\end{remark}

\subsection{String modules and the Liu--Morin closure theorem}
\label{subsec:liumorin}

We now recall the notion of a string module and the closure theorem
of Liu and Morin. Our presentation follows \cite[Section~2.2]{BR87}
and \cite[Section~2.4]{Lak16}.

\begin{definition}\label{def:stringdiagram}
Let $\Lambda = \mathbf{k}Q/I$ be a bound quiver algebra. A
$\Lambda$-string diagram is a finite connected quiver $W$ whose
underlying unoriented graph is a simple path (possibly with one
vertex), together with a quiver morphism $\ell : W \to Q$ such that
no two distinct edges of $W$ incident to the same vertex with the
same orientation (both outgoing or both incoming) map to the same
arrow of $Q$, and such that the associated $\mathbf{k}Q$-module
$M(W)$, defined below, is annihilated by the ideal $I$.
\end{definition}

Let $W$ be a string diagram. The associated right module $M(W)$ has
a $\mathbf{k}$-basis $\{z_x : x \in V(W)\}$. The idempotent $e_i$
acts by $e_i \cdot z_x = z_x$ if $\ell(x) = i$, and by
$e_i \cdot z_x = 0$ otherwise. For an arrow $\alpha \in Q_1$, the
action sends $z_x$ to $z_y$ if $W$ contains an oriented edge
labelled $\alpha$ from $x$ to $y$, and sends $z_x$ to $0$ otherwise.
The condition that $I$ annihilates $M(W)$ ensures that for every
relation $\rho \in I$, the induced linear transformation
$M(W) \xrightarrow{\rho} M(W)$ is zero.

More generally, given a right $\Lambda$-module $M$ and a
$\mathbf{k}$-basis $\mathcal{B}$ of $M$ consisting of homogeneous
elements $b \in M e_{v_b}$, the \emph{coefficient graph} of $M$
with respect to $\mathcal{B}$ is the directed graph with vertex set
$\mathcal{B}$ and a directed edge $b \xrightarrow{\alpha} b'$
labelled by $\alpha \in Q_1$ whenever $b'$ appears with a nonzero
scalar coefficient in the expansion of $b \cdot \alpha$ in
$\mathcal{B}$. A string module is a module that admits a homogeneous
basis whose coefficient graph is a $\Lambda$-string diagram.

\begin{remark}\label{rem:relationcondition}
For a monomial relation $p \in I$, the condition that $p$
annihilates $M(W)$ means that $W$ contains no directed subpath
labelled by $p$. For a binomial relation
$\alpha\beta - \lambda\gamma\delta \in I$ with
$\lambda \in \mathbf{k}^\ast$, the condition requires that
$(\alpha\beta - \lambda\gamma\delta) \cdot z_x = 0$ for every vertex
$x$ of $W$; since the underlying undirected graph of $W$ is a simple
path, this means that $W$ contains neither $\alpha\beta$ nor
$\gamma\delta$ as a directed subpath.
\end{remark}

A right module is a string module if it is isomorphic to $M(W)$ for
some string diagram $W$. A one-vertex diagram generates a simple
module. A disconnected union of string diagrams represents the direct
sum of its connected components.

\begin{example}\label{ex:stringdiagramsimple}
For every $v \in Q_0$, the simple module $S_v$ is a string module:
its string diagram is the one-vertex quiver whose sole vertex maps
to $v$. $\triangle$
\end{example}

\begin{example}\label{ex:stringdiagramprojective}
For $v \in Q_0$, the indecomposable projective module
$P_v = e_v \Lambda$ is a string module if and only if its coefficient
graph (with respect to the basis of surviving paths from $v$) has
underlying undirected graph a simple path. The property of $P_v$
being a string module neither implies nor is implied by the
uniseriality of $\rad P_v$.

Stringness does not imply uniseriality: for the string algebra
$\mathbf{k}Q$ with $Q$ the quiver with two parallel arrows
$\alpha, \beta : 1 \to 2$, the projective
$P_1 = \langle e_1, \alpha, \beta\rangle$ is a string module (its
coefficient graph has underlying undirected graph the simple path
$\alpha - e_1 - \beta$), but $\rad P_1 = S_2 \oplus S_2$ is not
uniserial.

Uniseriality does not imply stringness: the projective
$P_1 = \langle e_1, a, ac, z\rangle$ of the
Skowro\'nski--Waschb\"usch algebra of Section~\ref{sec:biserial} has
uniserial radical $aC$, but each vertex weight space $P_1 e_i$
($i = 1, 2, 3, 4$) is at most one-dimensional, so its homogeneous
basis is unique up to nonzero scalar multiples. The underlying
undirected graph of its coefficient graph is therefore rigidly the
union of a triangle spanned by $a$, $ac$, $z$ with a leaf $e_1$
attached at $a$, which is not a simple path, so $P_1$ is not a
string module.
$\triangle$
\end{example}

The diagrammatic description is equivalent to the usual description
of string modules using arrows and their formal inverses; we refer
to \cite[Section~2.4]{Lak16} for the details. Its advantage here is
that the relation condition is visible as an algebraic condition on
the module $M(W)$.

A band is a cyclic string, that is, a cyclic quiver with a morphism
to $Q$ satisfying the same relation conditions as a string diagram,
which is not a proper power of a smaller cyclic string. For a band
$\mathfrak{b}$, an integer $n \ge 1$, and an irreducible monic
polynomial $f \in \mathbf{k}[T]$ with $f(0) \ne 0$, there is a band
module $B(\mathfrak{b}, n, f)$; over an algebraically closed field
these are written $B(\mathfrak{b}, n, \lambda)$ with
$\lambda \in \mathbf{k}^\ast$. We refer to \cite[Section~2.3]{Lak16}
for the construction. The following classification result was
established by Butler and Ringel for string algebras \cite{BR87}
and extended to all special biserial algebras by Wald and
Waschb\"usch \cite{WW85}.

\begin{theorem}[\cite{BR87,WW85}]\label{thm:butlerringel}
Let $\Lambda$ be a special biserial algebra. Then every finite
dimensional indecomposable right $\Lambda$-module is exactly one of
the following:
\begin{enumerate}[label=\textup{(\roman*)}]
\item a string module;
\item a band module;
\item a non-uniserial projective-injective module.
\end{enumerate}
The third class occurs exclusively when $\Lambda$ possesses binomial
relations (commutativity relations), and contains only finitely many
isomorphism classes. In particular, if $\Lambda$ is a string
algebra, every indecomposable module is uniquely a string module or
a band module.
\end{theorem}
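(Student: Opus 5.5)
This classification is cited from \cite{BR87,WW85}, so I would not reprove it from scratch. Instead I would assemble it from the standard reduction of special biserial algebras to string algebras, together with the Butler--Ringel functorial filtration argument. The plan has three steps, carried out in order.

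\emph{Step 1: removing the projective-injectives.} Let $\Lambda=\mathbf{k}Q/I$ be special biserial and let $X$ be an indecomposable module. First I would treat the case where $X$ is projective and not uniserial. Conditions (SB1)--(SB2) force $\rad P_v=U+V$ with $U,V$ uniserial. If $U\cap V\neq 0$, then $I$ contains a binomial relation $p-\lambda q$ starting at $v$, and $\soc P_v$ is simple. A socle argument then shows that $P_v$ is also injective, since it is the injective envelope of its socle by a dual length count. If $U\cap V=0$, the coefficient graph of $P_v$ is a simple path, so $P_v$ is a string module. There are at most $|Q_0|$ such $P_v$, which gives finiteness. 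If $X$ is not isomorphic to one of the binomial projective-injectives $P_v$, then $X$ is annihilated by the socles $\soc P_v$ of those modules. This is because a nonzero map $P_v\to X$ not killing the socle would be injective, hence split, since $P_v$ is injective. So $X$ is a module over $\Lambda'=\Lambda/\bigoplus \soc P_v$. One checks that $\Lambda'$ is a string algebra: dividing by the socle turns each binomial relation $p-\lambda q$ into the two monomial relations $p$ and $q$.

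\emph{Step 2: the string algebra case.} For a string algebra I would follow Butler--Ringel. For each vertex $v$ one defines the two families of ``left'' and ``right'' subfunctors of the forgetful functor $M\mapsto Me_v$, built from images and kernels of strings. These are totally ordered, and the corresponding subquotient functors $F_{C,D}$ are indexed by pairs of half-strings. The key facts to prove are the following:
\begin{enumerate}[label=(\alph*)]
\item the functorial filtration covers $Me_v$, because the functors are linearly ordered and exhaust;
\item for a string module $M(C)$ or a band module $B(\mathfrak b,n,f)$, the values $F_{C,D}$ are computed explicitly;
\item every module $M$ admits a map from a direct sum of string and band modules that is an isomorphism on all the $F_{C,D}$, hence is an isomorphism.
\end{enumerate}
From these, Krull--Schmidt yields that every indecomposable is a string or band module. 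It also yields ``exactly one'', because the evaluation of the functors $F_{C,D}$ distinguishes string modules (finitely many nonzero values, with non-periodic words) from band modules (periodic words).

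\emph{Step 3: assembling the result.} Combining Steps 1 and 2 gives (i)--(iii) together with disjointness. The class in (iii) is disjoint from the other two: band modules are never projective, since they have no top vertex of an endpoint, and string modules have string-diagram coefficient graphs whereas the binomial $P_v$ contains a cycle. The remark about string algebras follows because in that case no binomial relations exist.

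I expect the main obstacle to be part (c) of Step 2, the surjectivity and exhaustiveness of the functorial filtration. This is where the detailed word combinatorics enters. My first attempt would be to construct, for each nonzero $F_{C,D}(M)$, the required string or band module mapping to $M$ by lifting a basis element along the defining chains of arrows and inverse arrows, using (SB2) to ensure uniqueness of continuation. For bands, I would invoke the $\mathbf{k}[T]$-module structure (rational canonical form) to obtain $f$ and $n$.
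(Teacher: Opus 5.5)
Your route is the argument behind \cite{BR87,WW85}, which is all the paper offers for this statement. You kill the socles of the non-uniserial projective-injectives to pass to the string algebra $\Lambda'$, then run the Butler--Ringel functorial filtration on $\Lambda'$. Steps 1 and 2 are sound as a sketch: the dual length count does give $\dim P_v=\dim\Lambda e_w$, and the splitting argument correctly shows that every other indecomposable is a $\Lambda'$-module.

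The step that fails as written is the disjointness argument in Step 3.
\begin{itemize}
\item ``Band modules have no top vertex of an endpoint'' does not rule out $B\cong P_v$. Band modules can have simple top, for example $B(\beta^{-1}\alpha,1,\lambda)$ over the Kronecker algebra.
\item ``The binomial $P_v$ contains a cycle'' is a statement about the path basis only. To show $P_v$ is not a string module you must exclude every homogeneous basis. Once $Q$ has oriented cycles, the weight spaces of $P_v$ need not be one-dimensional, so the rigidity used for the Skowro\'nski--Waschb\"usch algebra is not available.
\end{itemize}
Use instead the criterion you already proved in Step 1. $P_v$ is not a $\Lambda'$-module, since $e_v\cdot p_v=p_v\neq 0$. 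Every string module is a $\Lambda'$-module: by Remark~\ref{rem:relationcondition}, a string diagram killed by $p_v-\lambda_v q_v$ contains neither $p_v$ nor $q_v$, so $M(W)\,p_v=0$. Every band module produced in Step 2 is a $\Lambda'$-module by construction.

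That last point hides a convention you must state: bands have to be words for $\Lambda'$, that is, they must avoid $p_v$ and $q_v$. The paper's wording only requires a band to give a module annihilated by $I$. Under that reading, the primitive cyclic walk out along $p_v$ and back along $q_v^{-1}$, with a suitable parameter, yields $P_v$ itself, and classes (ii) and (iii) overlap. For the commutativity diamond, the walk $a,c,d^{-1},b^{-1}$ with parameter $1$ gives $P_1$.

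Similarly, ``one checks that $\Lambda'$ is a string algebra'' presupposes that $I$ is generated by paths and the binomials $p_v-\lambda_v q_v$. The paper imposes only (SB1) and (SB2) on an arbitrary admissible $I$, so you should derive this from your Step 1 analysis:
\begin{itemize}
\item the nonzero paths starting at $v$ lie on at most two chains;
\item paths on one chain are linearly independent, since they sit in distinct radical layers of a uniserial module;
\item the two chains meet only in $U\cap V\subseteq\soc P_v$.
\end{itemize}
Hence the only linear dependence is $p_v=\lambda_v q_v$, so $I+\sum_v\mathbf{k}p_v$ is spanned by paths and $\Lambda'$ is a string algebra.
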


In the next two paragraphs we assume for simplicity that
$\mathbf{k}$ is algebraically closed, so that band modules are
written $B(\mathfrak{b}, n, \lambda)$ with
$\lambda \in \mathbf{k}^\ast$. The general case is analogous, with
$\lambda$ replaced by an irreducible polynomial $f$. We follow
\cite[Section~2.4]{Lak16} and \cite[Section~4]{CB98}. Adopting the
standard indexing of this literature, which writes strings
right-to-left independent of the algebraic path composition
convention of Section~\ref{subsec:boundquivers}, let
$\mathfrak{u} = \alpha_n \ldots \alpha_2 \alpha_1$ be a string and
let $\mathfrak{v} = \alpha_i \ldots \alpha_j$ with
$1 \le j \le i \le n$ be a substring of $\mathfrak{u}$. We say that
$\mathfrak{v}$ is an image substring of $\mathfrak{u}$ if either
$\alpha_{i+1}$ is inverse or $i = n$, and either $\alpha_{j-1}$ is
direct or $j = 1$. Dually, $\mathfrak{v}$ is a factor substring of
$\mathfrak{u}$ if the analogous conditions hold after reversing the
string. If $\mathfrak{v}$ is an image substring of $\mathfrak{u}$,
there is a canonical monomorphism
$M(\mathfrak{v}) \hookrightarrow M(\mathfrak{u})$; if
$\mathfrak{v}$ is a factor substring of a string $\mathfrak{w}$,
there is a canonical epimorphism
$M(\mathfrak{w}) \twoheadrightarrow M(\mathfrak{v})$. A standard
graph map $f : M(\mathfrak{w}) \to M(\mathfrak{u})$ is a composition
of such two morphisms, and the intermediate string $\mathfrak{v}$ is
the string associated to $f$.

For a band module $B(\mathfrak{b}, n, \lambda)$ and a finite factor
substring $\mathfrak{v}$ of ${}^{\infty}\mathfrak{b}^{\infty}$, there are
canonical maps $B(\mathfrak{b}, n, \lambda) \to M(\mathfrak{v})$
indexed by $\Hom_{\mathbf{k}}(\mathbf{k}^n, \mathbf{k})$. Graph
maps of type $\mathrm{BS}$ are formal compositions of these with
graph maps existing between string modules. Graph maps of type
$\mathrm{SB}$ and $\mathrm{BB}$ are constructed dually. The
following foundational mapping theorem is due to Krause and
Crawley-Boevey.

\begin{theorem}[\cite{Kra91,CB98}]\label{thm:krausecrawley}
Assume that $\mathbf{k}$ is algebraically closed. Let $\Lambda$ be
a special biserial algebra and let $M, N$ be finite dimensional
indecomposable right $\Lambda$-modules that are either string
modules or band modules. Then the $\mathbf{k}$-vector space
$\Hom_\Lambda(M, N)$ is endowed with a $\mathbf{k}$-basis consisting
entirely of graph maps.
\end{theorem}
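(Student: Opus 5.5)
The plan is to reduce to the case of a string algebra, and then argue with the combinatorics of strings: linear independence by a triangularity argument, and spanning by subtracting graph maps one at a time. Bands are handled last by unwinding them to periodic strings.

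\emph{Step 1: reduction to string algebras.} Let $P_1,\dots,P_m$ be the non-uniserial projective-injective indecomposables of Theorem~\ref{thm:butlerringel}(iii). Put $\bar\Lambda = \Lambda/\bigl(\sum_j \soc P_j\bigr)$. I will use the standard fact that $\bar\Lambda$ is a string algebra: killing the socle of each $P_j$ turns each binomial relation $\alpha\beta-\lambda\gamma\delta$ into the two monomial relations $\alpha\beta$ and $\gamma\delta$. By Remark~\ref{rem:relationcondition}, a string module contains neither $\alpha\beta$ nor $\gamma\delta$ as a directed subpath. The same holds for band modules, which are built from cyclic strings satisfying the same conditions. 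Hence every string or band module is annihilated by these socles and is a $\bar\Lambda$-module. Since $\bar\Lambda$-modules form a full subcategory of $\Lambda$-modules, $\Hom_\Lambda(M,N)=\Hom_{\bar\Lambda}(M,N)$. The graph maps are defined by the same strings in both settings, so we may assume $\Lambda$ is a string algebra.

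\emph{Step 2: string to string.} Let $M=M(\mathfrak w)$ and $N=M(\mathfrak u)$ have bases $\{z_x\}$ and $\{z'_y\}$. Graph maps correspond to pairs consisting of a factor substring of $\mathfrak w$ and an image substring of $\mathfrak u$ that are equal as strings.

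For linear independence, give each graph map a \emph{leading pair} $(x,y)$: the first vertex $x$ of the common substring in $\mathfrak w$, matched with the corresponding vertex $y$ in $\mathfrak u$, with the common substring oriented so that this is well defined. Distinct graph maps differ either in their leading pairs or in the length of the common substring. Ordering graph maps by (leading pair, length) makes the matrix of coefficients $\langle f(z_x),z'_y\rangle$ triangular, which gives independence.

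For spanning, take $0\ne f$ and a pair $(x,y)$ with $c=\langle f(z_x),z'_y\rangle\ne 0$, chosen maximal for a suitable order on pairs (for instance, $z_x$ as high as possible in the radical filtration). Write $\alpha$ for the arrow labelling an edge of $\mathfrak w$ at $x$. The identity $f(z_x\alpha)=f(z_x)\alpha$, together with condition (SB2) (at most one continuation of each arrow), forces the following. Either the matching extends along $\alpha$ in both strings, or we have reached an end of the substring that is of factor type in $\mathfrak w$ (respectively image type in $\mathfrak u$). Maximality of $(x,y)$ excludes the bad ends.

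Propagating in both directions produces a factor substring of $\mathfrak w$ equal to an image substring of $\mathfrak u$, that is, a graph map $g$. Then $f-cg$ has strictly fewer nonzero coefficients at or above $(x,y)$, and induction on this finite measure finishes the argument.

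\emph{Step 3: bands.} For maps involving a band module $B(\mathfrak b,n,\lambda)$, I would pass to the infinite periodic string ${}^{\infty}\mathfrak b^{\infty}$. A map $B\to M(\mathfrak v)$ or $M(\mathfrak v)\to B$ lifts to, or factors through, the finite-dimensional pieces of this infinite string module, with the $n$-dimensional fibres giving the $\Hom_{\mathbf k}(\mathbf k^n,\mathbf k)$ parameters. The propagation argument of Step 2 then applies verbatim: the common substring is finite unless it wraps around the band. The wrapping case occurs only for $\mathrm{BB}$ maps between bands with the same $\mathfrak b$ and the same $\lambda$. There, the remaining part of $\Hom$ is $\Hom_{\mathbf k[T]}$ between Jordan blocks, which has the expected basis.

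I expect the main obstacle to be choosing the order on pairs $(x,y)$ in Step 2. It has to make the extension step forced, so that no bad end can occur at a maximal pair, and it has to make the subtraction strictly decrease the measure, including when $\mathfrak w$ or $\mathfrak u$ revisits the same vertex of $Q$ several times. I would first try ordering by the Loewy layer of $z_x$ and, secondarily, by the position of $z'_y$ in the socle filtration. This is essentially Crawley-Boevey's functorial-filtration ordering.
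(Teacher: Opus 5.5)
The paper gives no proof of this statement (it is quoted from \cite{Kra91,CB98}), so your proposal has to stand on its own. Step~1 is the standard reduction and is fine. There is a gap in Step~2, at exactly the point you flag, and Step~3 inherits it.

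\textbf{The gap in Step~2.} You never fix the order on pairs, and ``maximality of $(x,y)$ excludes the bad ends'' is asserted, not proved. More seriously, the decrease of your measure does not follow from what you establish. Subtracting $cg$ changes $f$ at \emph{every} pair in the support of $g$. Unless you know that $f$ takes the value $c$ on the whole support of $g$, the map $f-cg$ can acquire new nonzero coefficients, including above $(x,y)$.

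The independence argument has a similar soft spot. Two graph maps with the same leading pair and different lengths would spoil triangularity, and you do not rule this out. Also, the uniqueness you actually use is not \textup{(SB2)}, which is a condition on $I$. It is the string-diagram condition: no vertex carries two edges with the same label and orientation.

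\textbf{The missing idea.} Write the linearity equations out explicitly; then no order is needed. Each arrow $\alpha$ acts on the basis of $M(\mathfrak w)$, and on that of $M(\mathfrak u)$, by a partial injection $x\mapsto\alpha(x)$. Writing $f(z_x)=\sum_y F_{yx}z'_y$, $\Lambda$-linearity is equivalent to
\[
F_{y',\alpha(x)}=F_{\alpha^{-1}(y'),x}\qquad\text{for all }\alpha,\,x,\,y',
\]
where an entry with an undefined index is read as $0$. Consequently:
\begin{itemize}
\item $F$ is constant on the classes generated by $(x,y)\sim(\alpha(x),\alpha(y))$, and each class is a chain moving monotonically along both strings.
\item $F$ vanishes on any class containing a pair $(x,y)$ where $x$ has an incoming edge, or $y$ an outgoing edge, with no partner of the same label and orientation at the other vertex.
\item At the ends of the surviving classes, $\mathfrak w$ has an outgoing edge or none, and $\mathfrak u$ has an incoming edge or none. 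These are exactly the factor and image conditions.
\end{itemize}
So the surviving classes are precisely the supports of the graph maps. A factor end and an image end cannot be matched, so each support is a full class, and these supports are pairwise disjoint. Spanning and independence then follow at once. Bad ends are excluded by the equations themselves, not by maximality.

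\textbf{Step~3.} The same argument handles bands more cleanly than lifting to ${}^{\infty}\mathfrak b^{\infty}$. Use block entries $F_{yx}\in\Hom_{\mathbf{k}}(\mathbf{k}^n,\mathbf{k}^m)$, with $n=1$ or $m=1$ on the string side. The structure maps are identities or the invertible $J_n(\lambda)$, so bad ends still force zero, and each finite class contributes $\Hom_{\mathbf{k}}(\mathbf{k}^n,\mathbf{k}^m)$. Closed classes occur whenever the two bands agree up to rotation and inversion, \emph{independently} of the parameters. On such a class a monodromy condition of the form $F\,J_n(\lambda)=J_m(\mu)\,F$ (after normalising orientations) leaves a $\Hom_{\mathbf{k}[T]}$-space between Jordan blocks, which is nonzero only when $\lambda=\mu$. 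Your sentence that wrapping occurs only for equal $\lambda$ conflates these two points.
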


The following theorem of Liu and Morin is the key technical input
that allows us to replace arbitrary module states by string states
in Section~\ref{sec:stringsyzygy}.

\begin{proposition}[{Liu--Morin closure, \cite[Proposition~2.2]{LM04}}]
\label{thm:liumorin}
Let $\Lambda$ be a finite dimensional special biserial algebra over
a field $\mathbf{k}$, and let $X$ be a finite dimensional string
module. Then the first syzygy decomposes as
\[
\Omega_\Lambda X \cong P \oplus X_1 \oplus \cdots \oplus X_m,
\]
where $P$ is a projective module and each $X_j$ is an
indecomposable string module. Moreover, the string diagrams
representing the $X_j$ can be constructed explicitly from a string
diagram for $X$ using only the bound quiver presentation of
$\Lambda$.
\end{proposition}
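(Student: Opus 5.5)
The plan is the standard "kernel of the projective cover" computation for string modules, following Butler–Ringel and the string combinatorics of \cite[Section~2.4]{Lak16}. Write $X = M(W)$ for a string diagram $W$. Since $M(W)$ is indecomposable, take the top of $X$: its basis vectors $z_x$ are those at the \emph{sources} (peaks) of $W$, meaning vertices with no incoming edge of $W$. Say these are $x_1,\dots,x_r$, ordered along the path. Then
\[
P(X) = \bigoplus_{k=1}^{r} P_{\ell(x_k)},
\]
with the map $e_{\ell(x_k)} \mapsto z_{x_k}$. I would first check that this map is surjective: every vertex of $W$ is reached from some peak by a directed subpath, and the labels of that subpath form a nonzero path of $\Lambda$. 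Its kernel lies in the radical because the top is matched exactly, so this is a projective cover.

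The key step is to describe each $P_v$ over a special biserial algebra. By (SB1)–(SB2), $\rad P_v$ is a sum $U+V$ of at most two uniserial modules generated by the two arrows leaving $v$. Each of $U$ and $V$ is spanned by the maximal nonzero paths $\alpha p$ and $\beta q$ starting with those arrows. Two cases arise. Either $U \cap V = 0$, in which case $P_v$ is the string module of the string $p^{-1}\alpha^{-1}\beta q$. Or $U\cap V$ is simple (the binomial case), in which case $P_v$ is projective-injective. The peak $x_k$ of $W$ has at most two outgoing edges, each of which begins a maximal directed subpath of $W$ lying inside one of these two uniserial legs of $P_{\ell(x_k)}$. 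Then I would compute the kernel leg by leg:
\begin{itemize}
\item the part of a leg of $P_{\ell(x_k)}$ strictly beyond where the corresponding directed subpath of $W$ stops contributes a uniserial submodule;
\item at a valley $y$ of $W$, shared between consecutive peaks $x_k$ and $x_{k+1}$, the two paths from $x_k$ and $x_{k+1}$ to $y$ give the same element of $X$, so their difference lies in the kernel and glues the overhanging legs at $y$.
\end{itemize}
Concretely, the kernel is spanned by three families of elements:
\begin{itemize}
\item the overhanging tails at the two ends of $W$;
\item for each valley $y$, the differences $\pi_k - \lambda\pi_{k+1}$ identifying the two copies of $y$;
\item everything below them.
\end{itemize}
This exhibits $\Omega X$ as a direct sum of modules whose coefficient graphs are again strings: at each valley, a string is obtained by concatenating the continuation beyond $y$ (at most one continuation is nonzero, by (SB2)) with the overhangs. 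Any projective-injective $P_v$ with binomial relations either has its socle killed, or occurs as a whole direct summand $P$ of the kernel. This happens only when $W$ is a single vertex or the whole of $P_v$ maps injectively, and this case accounts for the projective summand $P$.

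Finally, I would verify that each resulting piece is annihilated by $I$ and satisfies the no-repeated-label condition, so that it is a genuine string diagram. This follows because each piece embeds in $\rad P(X)$ via an explicit homogeneous basis. The construction uses only the arrows of $Q$ and the relations in $I$, which gives the ``moreover'' clause.

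I expect the main obstacle to be the binomial case at valleys and at the ends of $W$. There the two legs of $P_v$ meet in the socle, and I must show that the glued kernel still has a simple-path coefficient graph rather than a cycle like the triangle in Example~\ref{ex:stringdiagramprojective}. I would handle this by showing that the common socle element is always either in the image (so it is cut off) or lies in a summand isomorphic to all of $P_v$, which is split off into $P$.
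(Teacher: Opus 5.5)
The paper does not reprove this statement. It imports it from \cite[Proposition~2.2]{LM04}, and Construction~\ref{con:stringtransition} only computes the same kernel by linear algebra. Your leg-by-leg analysis is sound in the monomial case, where the kernel is the direct sum of the end tails and one two-legged piece per interior valley.

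The gap is the binomial case, and your proposed resolution fails on both counts. First, a non-uniserial projective-injective $P_v$ can never be a direct summand of $\Omega_\Lambda X$. Since $\Omega_\Lambda X\subseteq\rad P(X)$, such a summand would be an injective submodule of $P(X)$, hence a direct summand of $P(X)$ lying inside $\rad P(X)$, which Nakayama's lemma forbids. Nor can $P_v$ map injectively into $X$. Second, the common socle is never ``cut off''. Let $x$ be a peak of $W$ with $\ell(x)=v$, and let $\soc P_v$ be spanned by $\alpha p=\lambda\beta q$. Then $\alpha p-\lambda\beta q\in I$ must annihilate $z_x$. Because $W$ is a simple path, $z_x\cdot\alpha p$ and $z_x\cdot\beta q$ are either zero or distinct basis vectors, so both vanish. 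Hence $W$ never contains a full leg of $P_v$, both overhangs at $x$ are nonzero, and both run down into $\soc P_v$, which always lies in the kernel.

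Consequently the pieces you attach on the two sides of a binomial peak overlap, and the decomposition ``end tails plus one summand per valley'' is false. Take the commutativity diamond of Example~\ref{ex:commutativediamond}:
\begin{itemize}
\item For $X=P_1/\soc P_1=M(W)$ with $W\colon 2\xleftarrow{a}1\xrightarrow{b}3$, your two end tails $\mathbf{k}\,ac$ and $\mathbf{k}\,bd$ are the same line. Thus $\Omega_A X=\mathbf{k}z\cong S_4$, not $S_4\oplus S_4$.
\item For $X=M(a)$, the tail $\mathbf{k}z$ and the unused leg $bA$ meet in $z$. Thus $\Omega_A X=bA\cong P_3$, which is two-dimensional, not $\mathbf{k}z\oplus bA$.
\end{itemize}

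What is missing is a merging step.
\begin{itemize}
\item At each binomial peak, glue the left and right pieces along the shared socle vector.
\item Prove that no cycle arises. A merge only ever joins the two neighbours of a single peak, the peaks are linearly ordered along $W$, and distinct peaks contribute distinct summands of $P(X)$.
\item The merged coefficient graph is then a zigzag. Its new bottom vertex has two incoming edges labelled by the last arrows of $\alpha p$ and $\beta q$, which are distinct (a standard consequence of (SB2) and $\alpha p\neq 0$).
\item The resulting block decomposition is direct by path-basis bookkeeping. Each basis path of $P(X)$ maps to a basis vector of $X$ or to $0$. So the kernel has a basis consisting of the paths mapping to $0$ together with the valley differences, and each of these lies in exactly one block.
\end{itemize}
The projective part $P$ then consists of blocks that happen to be projective, such as $bA\cong P_3$ above, and never of a binomial $P_v$.
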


\begin{remark}\label{rem:liumorinlimits}
The proposition is stated for finite dimensional string modules over
a finite dimensional special biserial algebra. The hypothesis of
finite dimensionality on $\Lambda$ is essential; the result does not
apply to infinite dimensional algebras.
\end{remark}

Proposition~\ref{thm:liumorin} has the following consequence, which
will be used repeatedly.

\begin{corollary}\label{cor:stringclosure}
Let $\Lambda$ be a special biserial algebra and let $S$ be a simple
string module. Then for every $n \ge 0$, every indecomposable
nonprojective direct summand of $\Omega_\Lambda^n S$ is a string
module.
\end{corollary}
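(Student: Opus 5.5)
The plan is to argue by induction on $n$, using Proposition~\ref{thm:liumorin} for the inductive step and Lemma~\ref{lem:additivity} to pass from indecomposable summands to the whole syzygy. The precise statement to prove by induction is slightly stronger than the corollary: for every $n \ge 0$, $\Omega_\Lambda^n S \cong P_n \oplus Y_{n,1} \oplus \cdots \oplus Y_{n,k_n}$, where $P_n$ is projective and each $Y_{n,j}$ is an indecomposable string module. Formulating it this way keeps the projective part and the string part separate at every stage.

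For the base case $n = 0$, we have $\Omega_\Lambda^0 S = S$. By Example~\ref{ex:stringdiagramsimple}, $S$ is a string module given by a one-vertex diagram, so we take $P_0 = 0$ and $Y_{0,1} = S$.

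For the inductive step, assume the decomposition holds for $\Omega_\Lambda^n S$. By Lemma~\ref{lem:additivity},
\[
\Omega_\Lambda^{n+1} S \cong \Omega_\Lambda P_n \oplus \bigoplus_j \Omega_\Lambda Y_{n,j}.
\]
The projective cover of a projective module is the identity, so $\Omega_\Lambda P_n = 0$. Each $Y_{n,j}$ is a finite dimensional string module, so Proposition~\ref{thm:liumorin} gives $\Omega_\Lambda Y_{n,j} \cong P' \oplus X_1 \oplus \cdots \oplus X_m$, with $P'$ projective and each $X_i$ an indecomposable string module. Collecting the projective parts into $P_{n+1}$ and the string parts into the $Y_{n+1,j}$ completes the induction.

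It then remains to identify the indecomposable nonprojective summands of $\Omega_\Lambda^n S$, and this is the only step that needs care. I would apply the Krull--Schmidt theorem, which holds for finite dimensional modules. It implies that any indecomposable summand of $\Omega_\Lambda^n S$ is isomorphic either to an indecomposable summand of $P_n$, hence projective, or to some $Y_{n,j}$. So every indecomposable nonprojective summand is isomorphic to a string module, and stringness is invariant under isomorphism by definition. Note also that a $Y_{n,j}$ may itself be projective (a string projective); this does not affect the claim, since such summands are excluded from the statement.
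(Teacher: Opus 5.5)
Your proof is correct and follows the same route as the paper: start from the fact that simple modules are string modules, then apply the Liu--Morin closure inductively, using additivity of $\Omega_\Lambda$. You also spell out the Krull--Schmidt step that identifies the nonprojective summands, and you note that projective string summands are harmless; the paper leaves both points implicit.
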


\begin{proof}
Every simple module is a string module by
Example~\ref{ex:stringdiagramsimple}. Applying
Proposition~\ref{thm:liumorin} inductively to the syzygies, and
using the additivity of $\Omega_\Lambda$ from
Lemma~\ref{lem:additivity}, gives the assertion.
\end{proof}

\section{The ordinary Gabriel quiver is insufficient}\label{sec:gabriel}

In this section we show that the ideal of relations $I$ is
indispensable for the computation of the global dimension. We
compare two string algebras with the same Gabriel quiver: the global
dimension of one is finite, while that of the other is infinite.
This shows that no formula for the global dimension depending only
on the Gabriel quiver can exist, even in the class of string
algebras. In Section~\ref{subsec:twostring} we introduce the two
algebras and compute their global dimensions, and in
Section~\ref{subsec:gabrielconsequences} we draw the consequences
for special biserial and biserial algebras. This motivates the use
of the bound quiver $(Q, I)$ throughout the rest of the paper.

\subsection{Two string algebras with the same Gabriel quiver}
\label{subsec:twostring}

Throughout this subsection $Q$ denotes the oriented $2$-cycle with
vertices $1$, $2$ and arrows
\[
\alpha : 1 \to 2, \qquad \beta : 2 \to 1,
\]
as in Figure~\ref{fig:twocycle}. We consider the two bound quiver
algebras
\[
A := \mathbf{k}Q/(\alpha\beta), \qquad
B := \mathbf{k}Q/(\alpha\beta, \beta\alpha),
\]
where $(\alpha\beta)$ denotes the two-sided ideal of $\mathbf{k}Q$
generated by the path $\alpha\beta$, and similarly for
$(\alpha\beta, \beta\alpha)$.

\vspace{-1.0em}
\begin{figure}[h]
\centering
\begin{tikzpicture}[scale=1.1,>=Stealth]
\node (1) at (0,0) {$1$};
\node (2) at (2,0) {$2$};
\draw[->] (1) to[bend left=25] node[above] {$\alpha$} (2);
\draw[->] (2) to[bend left=25] node[below] {$\beta$} (1);
\end{tikzpicture}
\caption{The oriented $2$-cycle $Q$.}
\label{fig:twocycle}
\end{figure}

We first record an isomorphism that will be used in the proof of the
main theorem of this subsection.

\begin{lemma}\label{lem:isomorphism}
In the algebra $A = \mathbf{k}Q/(\alpha\beta)$ the right $A$-modules
$P_1 = e_1A$ and $\beta A$ are isomorphic. In particular, $\beta A$
is a projective right $A$-module.
\end{lemma}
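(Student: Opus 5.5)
The plan is to write down the obvious left multiplication map and check that it is an isomorphism of right modules. Define
\[
\phi : P_1 = e_1A \longrightarrow \beta A, \qquad \phi(x) = \beta x.
\]
This map is right $A$-linear because left multiplication commutes with right multiplication. It is surjective, since $\beta = \beta e_1$ gives $\beta a = \beta e_1 a = \phi(e_1 a)$ for every $a \in A$, so $\beta A = \beta e_1 A$. The only substantive point is injectivity, which I will settle by comparing bases.

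Step one is to describe the basis of $P_1 = e_1A$. Working with left-to-right composition, the paths starting at $1$ are $e_1, \alpha, \alpha\beta, \alpha\beta\alpha, \ldots$. In $A$ every path of length at least two starting at $1$ contains the factor $\alpha\beta$ and so vanishes. Hence $P_1$ has basis $\{e_1, \alpha\}$.

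Step two is to describe the basis of $\beta A$. A basis of $A$ is given by the paths avoiding the subword $\alpha\beta$. These are exactly the paths of the form $\beta^{\epsilon}(\alpha\ldots)$ that do not continue past an $\alpha$ with a $\beta$. Concretely, they are
\[
e_1,\quad e_2,\quad \alpha,\quad \beta,\quad \beta\alpha.
\]
Multiplying on the left by $\beta$ gives the nonzero products $\beta e_1 = \beta$ and $\beta\alpha$, while $\beta e_2 = 0$, $\beta\beta = 0$ (not composable), and $\beta\beta\alpha = 0$. So $\beta A$ is spanned by $\{\beta, \beta\alpha\}$. These two elements are distinct surviving paths, hence linearly independent, and $\dim \beta A = 2$.

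Step three concludes the argument. The map $\phi$ sends the basis $e_1, \alpha$ to $\beta, \beta\alpha$, which is a basis of $\beta A$. Therefore $\phi$ is bijective, and $\beta A \cong P_1$ is projective.

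The only point requiring care is confirming that $\beta\alpha \neq 0$ in $A$, that is, that $\beta\alpha \notin (\alpha\beta)$. This holds because the ideal is monomial, so it is spanned by the paths containing $\alpha\beta$ as a subpath, and $\beta\alpha$ contains no such subpath. One must also check that the algebra is finite dimensional in the sense used above: $\beta\alpha\beta$ contains $\alpha\beta$, so there are no longer surviving paths. This is the same monomial-basis check as in step two.
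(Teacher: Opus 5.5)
Your proof is correct and is essentially the paper's argument. You use the same left-multiplication map $\phi(x)=\beta x$, get right linearity from associativity and surjectivity from $\phi(e_1)=\beta$, and get injectivity because $\phi$ sends the basis $\{e_1,\alpha\}$ of $P_1$ to the distinct surviving paths $\beta,\beta\alpha$, which are linearly independent in the monomial basis of $A$; the only cosmetic difference is that you first compute $\beta A=\langle \beta,\beta\alpha\rangle$ and argue basis-to-basis, whereas the paper checks the kernel directly from $\lambda\beta+\mu\beta\alpha=0$.
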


\begin{proof}
Define a $\mathbf{k}$-linear map
\[
\phi : P_1 \longrightarrow A, \qquad x \longmapsto \beta x.
\]
In the left-to-right path composition convention, the concatenation
$\beta x$ is a well-defined path for every $x \in e_1 A$, because
$t(\beta) = 1 = s(x)$. Since $A$ is associative, $\phi$ is a right
$A$-module homomorphism:
\[
\phi(x\cdot a) = \beta(xa) = (\beta x)a = \phi(x)\cdot a
\]
for all $x \in P_1$ and $a \in A$.

The image of $\phi$ is contained in $\beta A$. Since $P_1$ is
generated as a right $A$-module by $e_1$ and $\phi(e_1) = \beta$, the
image is exactly $\beta A$. Hence $\phi$ is a surjection
$P_1 \to \beta A$.

We show that $\phi$ is injective. The right $A$-module $P_1 = e_1A$
has $\mathbf{k}$-basis $\{e_1, \alpha\}$: the only paths in $Q$
starting at $1$ are $e_1$, $\alpha$, $\alpha\beta$, $\alpha\beta\alpha$,
$\ldots$, and all paths of length at least two contain $\alpha\beta$
as a subpath and hence vanish in $A$. Let
$x = \lambda e_1 + \mu\alpha \in P_1$ with $\lambda, \mu \in
\mathbf{k}$, and suppose that $\phi(x) = 0$ in $A$, that is,
$\lambda\beta + \mu\beta\alpha = 0$. The paths $\beta$ and
$\beta\alpha$ both start at $2$, both are nonzero in $A$ (neither
contains $\alpha\beta$ as a subpath), and are distinct
$\mathbf{k}$-basis elements of $A$. Hence their linear combination
is zero in $A$ if and only if both coefficients vanish, so
$\lambda = \mu = 0$ and $x = 0$. Therefore $\phi$ is injective, and
consequently an isomorphism of right $A$-modules. The final
assertion follows because $P_1 = e_1A$ is projective.
\end{proof}

\begin{theorem}\label{thm:twostring}
The algebras $A = \mathbf{k}Q/(\alpha\beta)$ and
$B = \mathbf{k}Q/(\alpha\beta, \beta\alpha)$ are string algebras with
the same Gabriel quiver $Q$, but
\[
\gldim A = 2, \qquad \gldim B = \infty.
\]
Consequently the ordinary Gabriel quiver alone cannot determine the
global dimension, even in the class of string algebras.
\end{theorem}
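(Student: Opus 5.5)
First I will check that both algebras are string algebras with Gabriel quiver $Q$. Both ideals are generated by paths of length two, so they lie in $(\mathbf{k}Q^+)^2$. Admissibility needs $(\mathbf{k}Q^+)^m \subseteq I$ for some $m$. In $Q$ every path of length at least $3$ contains both $\alpha\beta$ and $\beta\alpha$ as subpaths, so $(\mathbf{k}Q^+)^3 \subseteq (\alpha\beta)$, and the same holds for $B$. Hence both quotients are finite dimensional, and their Gabriel quiver is $Q$. Conditions (SB1) and (SB2) hold trivially, since each vertex is the source and target of exactly one arrow. The ideals are monomial, so both algebras are string algebras.

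Next I will compute $\gldim A$ through Lemma~\ref{lem:simplegldim}, by writing out minimal projective resolutions of $S_1$ and $S_2$. The bases are as follows:
\begin{itemize}
\item $P_1 = e_1A$ has basis $\{e_1,\alpha\}$, as in the proof of Lemma~\ref{lem:isomorphism};
\item $P_2 = e_2A$ has basis $\{e_2,\beta,\beta\alpha\}$, since $\beta\alpha\beta$ contains $\alpha\beta$.
\end{itemize}
Then:
\begin{itemize}
\item $\Omega S_1 = \rad P_1 = \alpha A$ is one-dimensional at vertex $2$, because $\alpha\beta=0$. So $\Omega S_1 \cong S_2$.
\item $\Omega S_2 = \rad P_2 = \beta A$, which is projective and isomorphic to $P_1$ by Lemma~\ref{lem:isomorphism}. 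So $\pd S_2 = 1$.
\item Since $S_2$ is not projective, $\pd S_1 = 1 + \pd S_2 = 2$.
\end{itemize}
Therefore $\gldim A = \max(2,1) = 2$.

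For $B$, the projectives are $P_1 = \langle e_1,\alpha\rangle$ and $P_2 = \langle e_2,\beta\rangle$. In $B$ we have $\alpha\beta = 0$ and $\beta\alpha = 0$, so $\rad P_1 \cong S_2$ and $\rad P_2 \cong S_1$. This gives $\Omega S_1 \cong S_2$ and $\Omega S_2 \cong S_1$. Neither simple is projective, because each $P_i$ is two-dimensional. Thus $\Omega^n S_1$ alternates between $S_2$ and $S_1$ and is never projective, so $\pd S_1 = \infty$ and $\gldim B = \infty$.

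The consequence follows at once: $A$ and $B$ share the Gabriel quiver $Q$ but have different global dimensions. No step is a real obstacle. The only care needed is the basis bookkeeping under the left-to-right composition convention, in particular checking that $\beta\alpha \neq 0$ in $A$ while $\alpha\beta = 0$. This asymmetry is exactly what makes $\beta A$ projective in $A$ but not in $B$.
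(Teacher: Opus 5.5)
Your proposal is correct and follows essentially the same route as the paper. Both arguments establish admissibility via the length-three paths, read off \textup{(SB1)}/\textup{(SB2)} from the fact that each vertex has one incoming and one outgoing arrow, compute $\Omega_A S_1 \cong S_2$ and $\Omega_A S_2 = \beta A \cong P_1$ via Lemma~\ref{lem:isomorphism}, and use the periodic syzygies $\Omega_B S_1 \cong S_2$, $\Omega_B S_2 \cong S_1$ to get $\gldim B = \infty$.
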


\begin{proof}
Both ideals $(\alpha\beta)$ and $(\alpha\beta, \beta\alpha)$ are
monomial and admissible. For $(\alpha\beta)$ the admissibility is
verified as follows: the only paths of length three in $Q$ are
$\alpha\beta\alpha$ and $\beta\alpha\beta$, and both contain
$\alpha\beta$ as a subpath, so
$(\mathbf{k}Q^+)^3 \subseteq (\alpha\beta) \subseteq (\mathbf{k}Q^+)^2$.
For $(\alpha\beta, \beta\alpha)$ the admissibility is immediate,
since $(\alpha\beta, \beta\alpha) = (\mathbf{k}Q^+)^2$.

The quiver $Q$ satisfies conditions \textup{(SB1)} and \textup{(SB2)}
of Definition~\ref{def:specialbis}: each vertex is the source of
exactly one arrow and the target of exactly one arrow, and each
arrow has exactly one possible predecessor and exactly one possible
successor. Since the defining ideals are monomial, both $A$ and $B$
are string algebras in the sense of Definition~\ref{def:string}.
Since $Q$ has no parallel arrows, the Gabriel quiver of both
algebras is $Q$ itself.

We compute the global dimensions of $A$ and $B$ separately.

\emph{The algebra $A = \mathbf{k}Q/(\alpha\beta)$.} The
$\mathbf{k}$-basis of $A$ consists of the residue classes of the
paths that do not lie in $(\alpha\beta)$. Since $(\alpha\beta)$ is
generated by the single path $\alpha\beta$, and every path of length
at least three in $Q$ contains $\alpha\beta$ as a subpath, the
$\mathbf{k}$-basis of $A$ is
\[
\{e_1, e_2, \alpha, \beta, \beta\alpha\}.
\]
The indecomposable projective right $A$-modules have the
$\mathbf{k}$-bases
\[
P_1 = \langle e_1, \alpha\rangle, \qquad
P_2 = \langle e_2, \beta, \beta\alpha\rangle,
\]
consisting of the paths starting at $1$ and $2$ respectively. Their
radicals are
\[
\rad P_1 = \alpha A = \langle \alpha\rangle, \qquad
\rad P_2 = \beta A = \langle \beta, \beta\alpha\rangle.
\]
The right $A$-module $\alpha A$ is one-dimensional, and its radical
is zero because $\alpha \cdot \alpha$ and $\alpha \cdot \beta$ are not
nonzero paths in $A$. Hence $\alpha A$ is a simple module, supported
at vertex $2$, so $\alpha A \cong S_2$. Therefore
$\Omega_A S_1 \cong S_2$.

By Lemma~\ref{lem:isomorphism}, the right $A$-module $\beta A$ is
isomorphic to $P_1$ and hence is projective. Therefore
$\Omega_A S_2 \cong \beta A \cong P_1$, and the minimal projective
resolution of $S_2$ has length $1$. Consequently $\pd_A S_2 = 1$.

The minimal projective resolution of $S_1$ is then
\[
0 \to P_1 \xrightarrow{\beta\cdot} P_2 \xrightarrow{\alpha\cdot} P_1
\to S_1 \to 0,
\]
where the map $P_1 \to P_2$ is left multiplication by
$\beta : 2 \to 1$ and the map $P_2 \to P_1$ is left multiplication
by $\alpha : 1 \to 2$ (so in each case left multiplication by a path
$p : i \to j$ gives a map $P_j \to P_i$).

We verify exactness at the middle term. An element of $P_2$ has the
form $\lambda e_2 + \mu\beta + \nu\beta\alpha$ with
$\lambda, \mu, \nu \in \mathbf{k}$. Its image under $\alpha\cdot$ is
$\lambda\alpha$, since $\alpha\beta = 0$ in $A$ and hence also
$\alpha\beta\alpha = 0$. So $\alpha\cdot$ sends $x$ to zero if and
only if $\lambda = 0$. Hence
\[
\ker(\alpha\cdot) = \{\mu\beta + \nu\beta\alpha : \mu, \nu \in \mathbf{k}\}
= \beta A,
\]
which equals the image of $\beta\cdot : P_1 \to P_2$. By
Lemma~\ref{lem:isomorphism}, the map $\beta\cdot$ is injective.
Therefore the resolution is exact, and $\pd_A S_1 = 2$. By
Lemma~\ref{lem:simplegldim},
\[
\gldim A = \sup\{2, 1\} = 2.
\]

\emph{The algebra $B = \mathbf{k}Q/(\alpha\beta, \beta\alpha)$.} Since
both defining relations are paths of length two, and every path of
length at least two in $Q$ contains $\alpha\beta$ or $\beta\alpha$
as a subpath, the $\mathbf{k}$-basis of $B$ is
\[
\{e_1, e_2, \alpha, \beta\}.
\]
The indecomposable projective right $B$-modules have the
$\mathbf{k}$-bases
\[
P_1 = \langle e_1, \alpha\rangle, \qquad
P_2 = \langle e_2, \beta\rangle,
\]
with the right $B$-module structure determined by
\[
\alpha \cdot e_2 = \alpha, \quad \alpha \cdot \beta = 0, \qquad
\beta \cdot e_1 = \beta, \quad \beta \cdot \alpha = 0.
\]
The radicals are
\[
\rad P_1 = \alpha B = \langle \alpha\rangle \cong S_2, \qquad
\rad P_2 = \beta B = \langle \beta\rangle \cong S_1.
\]
Hence $\Omega_B S_1 \cong S_2$ and $\Omega_B S_2 \cong S_1$.
Therefore $\Omega_B^2 S_1 \cong S_1$ and
$\Omega_B^2 S_2 \cong S_2$. Since $S_1$ and $S_2$ are nonprojective
(their projective covers $P_1$ and $P_2$ have nonzero radicals), no
iterated syzygy of either simple module is projective.
Consequently
\[
\pd_B S_1 = \pd_B S_2 = \infty,
\]
and by Lemma~\ref{lem:simplegldim},
\[
\gldim B = \infty.
\]
\end{proof}

\begin{remark}\label{rem:resolutioncomparison}
For $A$, the minimal projective resolution of $S_1$ has length two,
whereas for $B$ it is periodic of period two:
\[
\cdots \to P_2 \xrightarrow{\alpha\cdot} P_1 \xrightarrow{\beta\cdot}
P_2 \xrightarrow{\alpha\cdot} P_1 \to S_1 \to 0.
\]
The periodic resolution of $B$ reflects the fact that
$\Omega_B^2 S_1 \cong S_1$ and $\Omega_B^2 S_2 \cong S_2$. The two
algebras therefore have different homological behaviour, even though
their Gabriel quivers coincide.
\end{remark}

\subsection{Consequences for special biserial and biserial algebras}
\label{subsec:gabrielconsequences}

Theorem~\ref{thm:twostring} has the following immediate
consequences.

\begin{corollary}\label{cor:gabrielinsufficient}
There exist two finite dimensional string algebras with isomorphic
Gabriel quivers and distinct global dimensions. Consequently no
formula for the global dimension depending only on the Gabriel
quiver can be valid, even in the class of string algebras, and
hence also in the classes of special biserial and biserial algebras.
\end{corollary}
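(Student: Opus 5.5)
The plan is to derive the corollary directly from Theorem~\ref{thm:twostring}, with only bookkeeping about the class inclusions. First, I exhibit the pair $A=\mathbf{k}Q/(\alpha\beta)$ and $B=\mathbf{k}Q/(\alpha\beta,\beta\alpha)$ on the oriented $2$-cycle $Q$. Theorem~\ref{thm:twostring} already establishes that both are finite dimensional string algebras (the ideals are monomial and admissible, and (SB1), (SB2) hold), that both have Gabriel quiver $Q$ (no parallel arrows, and $I\subseteq(\mathbf{k}Q^+)^2$ so no arrow is killed), and that $\gldim A=2\neq\infty=\gldim B$. This gives the existence statement at once.

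Next, I prove the impossibility statement by contradiction. Suppose $F$ is any assignment defined on quivers such that $\gldim\Lambda=F(Q_\Lambda)$ for every string algebra $\Lambda$, where $Q_\Lambda$ is its Gabriel quiver. Then $Q_A\cong Q_B$, since both are $Q$, so $F(Q_A)=F(Q_B)$. This forces $\gldim A=\gldim B$, contradicting $2\neq\infty$. The only point that needs care is that ``depending only on the Gabriel quiver'' must mean invariance under quiver isomorphism, and that the global dimension is an isomorphism invariant of the algebra. Both are immediate, so this step is formal.

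Finally, I extend the conclusion to special biserial and biserial algebras. I use the inclusions string $\subseteq$ special biserial $\subseteq$ biserial recorded in Section~\ref{subsec:specialbis}. Any formula valid on a larger class restricts to a formula valid on string algebras, so the previous step rules it out on the larger classes as well. For the biserial class I also have to confirm that $A$ and $B$ are biserial in the sense of Definition~\ref{def:biserial}, which uses both left and right modules. Rather than invoking the inclusion abstractly, I check this directly for the four indecomposable projectives (left and right) of $A$ and $B$. Each has a radical that is uniserial of length at most two: for example, the right projective $P_2$ of $A$ has radical $\langle\beta,\beta\alpha\rangle$, and the left projectives are spanned by paths ending at a vertex. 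So a decomposition $\rad P=U+0$ works.

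I do not expect any real obstacle. The only step warranting a sentence of verification is this last one, namely that the string examples genuinely lie in the biserial class under the two-sided definition. The direct uniseriality check settles it.
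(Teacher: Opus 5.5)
Your proposal is correct and follows the paper's route: the paper's proof also reads the corollary directly off Theorem~\ref{thm:twostring}. Your extra direct check that $A$ and $B$ are biserial under the two-sided Definition~\ref{def:biserial} is also correct, since every left and right indecomposable projective has a uniserial radical of length at most two. That check makes the extension to the biserial class self-contained, rather than resting on the inclusion special biserial $\subseteq$ biserial.
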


\begin{proof}
Immediate from Theorem~\ref{thm:twostring}.
\end{proof}

\begin{corollary}\label{cor:notderived}
The algebras $A$ and $B$ of Theorem~\ref{thm:twostring} are not
derived equivalent.
\end{corollary}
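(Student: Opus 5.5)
The plan is to use the fact that derived equivalence between finite dimensional algebras preserves finiteness of global dimension. By Theorem~\ref{thm:twostring}, $\gldim A = 2 < \infty$ while $\gldim B = \infty$, so once this invariance is in hand the corollary follows at once.

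\textbf{Step 1: pass to perfect complexes.} Suppose, for contradiction, that $D^b(\mathrm{mod}\,A) \simeq D^b(\mathrm{mod}\,B)$ as triangulated categories. By Rickard's theorem, any such equivalence restricts to an equivalence between the full subcategories of perfect complexes, $K^b(\mathrm{proj}\,A) \simeq K^b(\mathrm{proj}\,B)$. Intrinsically, the perfect complexes are the compact objects of the unbounded derived category. Alternatively, they are the homologically finite objects $X$, meaning those for which $\Hom(X, Y[n]) = 0$ for $|n| \gg 0$ for every $Y$ in $D^b$.

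\textbf{Step 2: characterise finite global dimension categorically.} A finite dimensional algebra $\Lambda$ has finite global dimension if and only if the canonical functor $K^b(\mathrm{proj}\,\Lambda) \to D^b(\mathrm{mod}\,\Lambda)$ is essentially surjective. One direction is standard: if every simple module, and hence every module, has a finite projective resolution, then every bounded complex is perfect. For the converse, if every simple module $S_i$ is isomorphic in $D^b$ to a bounded complex of projectives, then $\pd S_i < \infty$, and Lemma~\ref{lem:simplegldim} gives $\gldim\Lambda < \infty$.

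\textbf{Step 3: conclude.} Since the equivalence identifies the two perfect subcategories with each other and the two bounded derived categories with each other, it preserves the property that every object is perfect. Finite global dimension is therefore a derived invariant, which contradicts $\gldim A = 2$ and $\gldim B = \infty$.

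The only nontrivial input is Step~1. I would cite Rickard's intrinsic characterisation of perfect complexes rather than reprove it.

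A self-contained alternative uses the fact that a derived equivalence preserves the number of simple modules (the rank of $K_0$) and sends a tilting complex $T$ with $\operatorname{End}(T) \cong B$ to a generator. This is more cumbersome, so I would use the invariance argument above.
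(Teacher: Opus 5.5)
Your proposal is correct and follows the same route as the paper: finite global dimension is equivalent to every object of $D^b$ being perfect, derived equivalences preserve perfect complexes, and $\gldim A = 2$ versus $\gldim B = \infty$ finishes the argument. You add a little more detail than the paper's one-paragraph proof, namely Rickard's homological characterisation of perfect complexes and the reduction to simple modules via Lemma~\ref{lem:simplegldim}, but the logic is the same.
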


\begin{proof}
For a finite dimensional algebra $\Lambda$ over a field,
$\gldim\Lambda < \infty$ if and only if every object of the bounded
derived category $D^b(\operatorname{mod}\Lambda)$ is isomorphic to a
bounded complex of finitely generated projective modules. Since any
derived equivalence preserves the subcategory of perfect complexes,
the finiteness of the global dimension is invariant under derived
equivalence. Since $\gldim A = 2 \neq \infty = \gldim B$, the two
algebras cannot be derived equivalent.
\end{proof}

\begin{remark}\label{rem:cartan}
The two algebras $A$ and $B$ have the same Gabriel quiver and the
same simple modules, but their Cartan matrices are not congruent
over $\mathbb{Z}$. We compute the Cartan matrix with the convention
$C_{ij} = \dim_{\mathbf{k}} e_i \Lambda e_j$, which records the
dimension of the space of paths from $i$ to $j$.

For $A$, the entries are $C_{11} = 1$, $C_{12} = 1$, $C_{21} = 1$, and $C_{22} = 2$. So the Cartan matrix of $A$ is
\[
C_A = \begin{pmatrix} 1 & 1 \\ 1 & 2 \end{pmatrix}.
\]
For $B$, the path $\beta\alpha$ lies in the ideal
$(\alpha\beta, \beta\alpha)$ and is therefore zero in $B$, and the
same computation gives
\[
C_B = \begin{pmatrix} 1 & 1 \\ 1 & 1 \end{pmatrix}.
\]
Derived equivalent finite dimensional algebras have congruent Cartan
matrices over $\mathbb{Z}$; in particular, their determinants must
coincide. Here $\det C_A = 1$ while $\det C_B = 0$. This provides a
second invariant distinguishing the two algebras, and it records the
homological consequence of the survival of the path $\beta\alpha$ in
$A$ and of its vanishing in $B$.
\end{remark}

\begin{remark}\label{rem:boundquiverinput}
Corollary~\ref{cor:gabrielinsufficient} shows that the ideal of
relations $I$ is indispensable for the computation of the global
dimension. The correct input is the bound quiver $(Q, I)$, not the
Gabriel quiver $Q$ alone. This motivates the constructions of
Sections~\ref{sec:universal}--\ref{sec:stringsyzygy}, where the
reachable syzygy quiver and the string-syzygy quiver are built
explicitly from the bound quiver presentation.
\end{remark}

\section{The reachable syzygy quiver of a finite dimensional algebra}
\label{sec:universal}

In this section we associate to an arbitrary finite dimensional
algebra a directed multigraph whose vertices are the isomorphism
classes of indecomposable nonprojective modules that appear as
summands of iterated syzygies of simple modules, and we show that the
projective dimension of a vertex equals one plus its directed path
height in this graph. This gives a universal formula for the global
and finitistic dimensions in terms of heights in the graph. The
formula is exact for every finite dimensional algebra, but its state
space consists of arbitrary modules, and hence it is not yet a
combinatorial statement. The rest of the paper is devoted to
identifying classes of algebras on which the quiver can be replaced
by a combinatorial object.

We follow the conventions of Section~\ref{sec:background}.
Throughout, $\Lambda$ is a finite dimensional $\mathbf{k}$-algebra,
and all modules are right $\Lambda$-modules.

\subsection{The reachable syzygy quiver}\label{subsec:reachable}

We construct a directed multigraph from the syzygies of the simple
right $\Lambda$-modules.

\begin{definition}\label{def:reachable}
The reachable syzygy quiver of $\Lambda$, denoted
$\Gamma_\Omega(\Lambda)$, is the directed multigraph defined as
follows.
\begin{enumerate}[label=\textup{(\roman*)}]
\item A vertex of $\Gamma_\Omega(\Lambda)$ is an isomorphism class
$[X]$ of an indecomposable nonprojective right $\Lambda$-module $X$
that occurs as a direct summand of $\Omega_\Lambda^n S$ for some
simple right $\Lambda$-module $S$ and some $n \ge 0$.
\item The multiplicity of an arrow $[X] \to [Y]$ is the multiplicity
of $Y$ among the indecomposable nonprojective direct summands of
$\Omega_\Lambda X$.
\end{enumerate}
Projective summands of $\Omega_\Lambda X$ are discarded.
\end{definition}

The definition is well posed. For every simple right $\Lambda$-module
$S$ and every $n \ge 0$, the syzygy $\Omega_\Lambda^n S$ is a finite
dimensional right $\Lambda$-module: indeed, $\Omega_\Lambda^n S$ is a
submodule of the projective cover $P(\Omega_\Lambda^{n-1} S)$, which
is finite dimensional over the finite dimensional algebra $\Lambda$.
Hence, by the Krull--Schmidt theorem, $\Omega_\Lambda^n S$ decomposes
into a finite number of indecomposable summands. A projective simple
module contributes no vertices: if $S$ is projective, then
$\Omega_\Lambda^0 S = S$ is projective and $\Omega_\Lambda^n S = 0$
for all $n \ge 1$.

\begin{remark}\label{rem:mult}
The multiplicities are not used in the height computations of this
section, but they record the complete nonprojective part of each
first syzygy. We retain them because they are used in
Section~\ref{sec:stringsyzygy} to compare the reachable syzygy quiver
of a special biserial algebra with its string-syzygy quiver.
\end{remark}

\begin{example}\label{ex:reachablesimple}
Let $\Lambda = \mathbf{k}Q/I$ be a string algebra. Suppose that $S$
is a simple right $\Lambda$-module and that $\Omega_\Lambda S$
decomposes as $P \oplus X$ with $P$ projective and $X$ an
indecomposable nonprojective string module. Then $[S] \to [X]$ is an
arrow of $\Gamma_\Omega(\Lambda)$. If $X$ happens to be projective,
no arrow is drawn and $[S]$ is a sink. $\triangle$
\end{example}

\begin{remark}\label{rem:reachableinfinite}
The set of vertices of $\Gamma_\Omega(\Lambda)$ need not be finite.
When $\Lambda$ has infinite global dimension and
$\Gamma_\Omega(\Lambda)$ is acyclic, the quiver must be infinite by
Theorem~\ref{thm:universalheight}\textup{(3)} below. On the other
hand, the algebra $B$ of Theorem~\ref{thm:twostring} has infinite
global dimension and its reachable syzygy quiver is finite,
consisting of two vertices joined by a directed $2$-cycle.
\end{remark}

\begin{lemma}\label{lem:reachablelocalfinite}
Every vertex of $\Gamma_\Omega(\Lambda)$ has finite out-degree.
\end{lemma}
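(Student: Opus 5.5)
The plan is to bound the out-degree of a vertex $[X]$ by a dimension count, using only that $\Omega_\Lambda X$ is finite dimensional. Fix a vertex $[X]$ of $\Gamma_\Omega(\Lambda)$ and a representative $X$, which is an indecomposable nonprojective summand of some $\Omega_\Lambda^n S$. By the well-posedness discussion after Definition~\ref{def:reachable}, $\Omega_\Lambda^n S$ is finite dimensional, so $X$ is finite dimensional too. Its projective cover $P(X)$ is a finite direct sum of indecomposable projectives $e_v\Lambda$, hence finite dimensional. Consequently $\Omega_\Lambda X = \ker \pi_X \subseteq P(X)$ is finite dimensional, with
\[
\dim_{\mathbf{k}} \Omega_\Lambda X \le \dim_{\mathbf{k}} P(X) \le \dim_{\mathbf{k}} \Lambda \cdot \dim_{\mathbf{k}} X .
\]

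Next apply the Krull--Schmidt theorem to $\Omega_\Lambda X$. Write $\Omega_\Lambda X \cong Y_1 \oplus \cdots \oplus Y_r$ with each $Y_j$ indecomposable and nonzero. Since every $Y_j$ has dimension at least one, we get $r \le \dim_{\mathbf{k}} \Omega_\Lambda X < \infty$, and the multiset of isomorphism classes $\{[Y_j]\}$ is uniquely determined.

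By Definition~\ref{def:reachable}(ii), the arrows leaving $[X]$, counted with multiplicity, correspond exactly to the nonprojective $Y_j$. So the total out-degree is at most $r$. This also shows that each target $[Y_j]$ is a genuine vertex: $Y_j$ is a summand of $\Omega_\Lambda^{n+1} S = \Omega_\Lambda X \oplus (\text{other summands})$, by Lemma~\ref{lem:additivity}.

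There is no real obstacle here. The only point needing care is that the arrow multiplicity is well defined and finite independently of the chosen representative $X$ and of the chosen decomposition. Both follow from the uniqueness of projective covers up to isomorphism and from the uniqueness part of Krull--Schmidt, so the argument reduces to the finite dimensionality of $\Omega_\Lambda X$.
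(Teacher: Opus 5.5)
Your proposal is correct and follows essentially the same argument as the paper: $X$ is finite dimensional (finitely generated), so $\Omega_\Lambda X$ is too, and Krull--Schmidt gives finitely many indecomposable summands, which bounds the out-degree. Your explicit bound $r \le \dim_{\mathbf{k}} \Omega_\Lambda X$ and your remarks on why the arrow multiplicities are well defined are harmless refinements of that same idea.
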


\begin{proof}
Let $[X]$ be a vertex. Then $X$ is a finitely generated right
$\Lambda$-module, since it occurs as a direct summand of an iterated
syzygy of a simple module, and syzygies of finitely generated modules
over a finite dimensional algebra remain finitely generated. Hence
$\Omega_\Lambda X$ is finitely generated, and therefore has finitely
many indecomposable direct summands. Consequently only finitely many
arrows emanate from $[X]$.
\end{proof}

\subsection{Directed height}\label{subsec:height}

In this subsection we define directed height for vertices of a
directed multigraph, and we record the elementary facts about height
that will be used below. Throughout, a directed path in a multigraph
is a finite sequence of composable arrows; vertices and arrows of the
path may repeat.

\begin{definition}\label{def:height}
Let $x$ be a vertex of a directed multigraph. The directed height of
$x$ is
\[
\operatorname{ht}(x) :=
\sup\{\ell : \text{there is a directed path of } \ell \text{ edges
originating at } x\}.
\]
The value of $\operatorname{ht}(x)$ belongs to
$\mathbb{N}_0 \cup \{\infty\}$. A path of length zero, consisting
only of the vertex itself, is allowed, so a sink has height $0$.
\end{definition}

\begin{remark}\label{rem:pathsrepeat}
A directed path in Definition~\ref{def:height} is a composable
sequence of arrows in the quiver-theoretic sense, and repetitions of
vertices and arrows are allowed. This is essential for detecting
infinite height: a directed cycle reachable from a vertex produces
paths of arbitrary length, hence yields infinite height at that
vertex.
\end{remark}

\begin{example}\label{ex:heightcycle}
Let $\Gamma$ be the directed graph with two vertices $x$, $y$ and
arrows $x \to y$ and $y \to x$. Then
$\operatorname{ht}(x) = \operatorname{ht}(y) = \infty$, because the
cycle can be traversed arbitrarily many times. If the arrow
$y \to x$ is removed, then $\operatorname{ht}(x) = 1$ and
$\operatorname{ht}(y) = 0$. $\triangle$
\end{example}

\begin{remark}\label{rem:heightcomparison}
If $\operatorname{ht}(x) = h < \infty$, then there is a directed path
of length $h$ starting at $x$ and no directed path of length $h + 1$
starting at $x$. If $\operatorname{ht}(x) = \infty$, then for every
$\ell \ge 0$ there is a directed path of length $\ell$ starting at
$x$.
\end{remark}

The following lemma connects directed paths in the reachable syzygy
quiver to the indecomposable nonprojective summands of iterated
syzygies.

\begin{lemma}\label{lem:pathsyzygy}
Let $\Lambda$ be a finite dimensional $\mathbf{k}$-algebra and let
$[X]$, $[Y]$ be vertices of $\Gamma_\Omega(\Lambda)$. For every
$n \ge 0$ the following are equivalent.
\begin{enumerate}[label=\textup{(\roman*)}]
\item There is a directed path of length $n$ from $[X]$ to $[Y]$ in
$\Gamma_\Omega(\Lambda)$.
\item The module $Y$ occurs as an indecomposable nonprojective direct
summand of $\Omega_\Lambda^n X$.
\end{enumerate}
If multiplicities are retained, the number of distinct directed walks
of length $n$ from $[X]$ to $[Y]$ equals the multiplicity of $Y$ in
the nonprojective part of $\Omega_\Lambda^n X$.
\end{lemma}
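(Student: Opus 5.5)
We argue by induction on $n$, proving the multiplicity statement directly; the equivalence of (i) and (ii) then follows, since a multiplicity is positive exactly when a walk exists. For a finitely generated module $M$, write $\mathrm{np}(M)$ for its nonprojective part, i.e.\ the direct sum of the indecomposable nonprojective summands in a Krull--Schmidt decomposition. It is well defined up to isomorphism by the Krull--Schmidt theorem. The claim to be proved is: for every $n\ge 0$ and every vertex $[Y]$, the multiplicity of $Y$ in $\mathrm{np}(\Omega^n_\Lambda X)$ equals the number $w_n([X],[Y])$ of directed walks of length $n$ from $[X]$ to $[Y]$.

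For $n=0$ we have $\Omega^0_\Lambda X = X$, which is indecomposable and nonprojective. So the multiplicity of $Y$ in $\mathrm{np}(X)$ is $1$ if $Y\cong X$ and $0$ otherwise, which matches $w_0$. For $n=1$ the claim is exactly Definition~\ref{def:reachable}(ii).

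For the inductive step, write $\Omega^n_\Lambda X \cong P\oplus \bigoplus_{Z} Z^{m_Z}$, where $P$ is projective and $Z$ runs over pairwise nonisomorphic indecomposable nonprojective modules with $m_Z = w_n([X],[Z])$ by induction. Each such $Z$ is a direct summand of $\Omega^n_\Lambda X$, and $X$ is a summand of some $\Omega^k_\Lambda S$. By Lemma~\ref{lem:additivity}, iterated syzygies commute with direct sums, so $Z$ is a summand of $\Omega^{n+k}_\Lambda S$ and hence $[Z]$ is a vertex of $\Gamma_\Omega(\Lambda)$. The same observation guarantees that the summands of $\Omega Z$ are again vertices.

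Next we apply Lemma~\ref{lem:additivity}: $\Omega^{n+1}_\Lambda X \cong \Omega P\oplus\bigoplus_Z (\Omega Z)^{m_Z}$. Here $\Omega P = 0$, because the projective cover of a projective module is the identity. Taking nonprojective parts commutes with finite direct sums, so the multiplicity of $Y$ in $\mathrm{np}(\Omega^{n+1}_\Lambda X)$ is $\sum_Z m_Z\cdot \mu(Z,Y)$, where $\mu(Z,Y)$ is the arrow multiplicity of $[Z]\to[Y]$. This sum is precisely the walk count $w_{n+1}([X],[Y])=\sum_Z w_n([X],[Z])\,\mu([Z],[Y])$, obtained by decomposing a walk according to its last arrow. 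This completes the induction.

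The only delicate point is bookkeeping: we need that the identification of $\Omega^n_\Lambda X$ is up to isomorphism, and that minimal syzygies are well defined up to isomorphism, so that Lemma~\ref{lem:additivity} applies to the chosen decomposition. Uniqueness of projective covers gives exactly this. Apart from that the argument is formal.
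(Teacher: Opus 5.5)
Your proof is correct and follows essentially the same route as the paper. Both argue by induction on $n$, using the additivity of $\Omega_\Lambda$ (Lemma~\ref{lem:additivity}) and Krull--Schmidt, and split a walk at its last arrow to get the recursion $\sum_{[Z]} m_Z\, n_{Z,Y}$. The differences are only organisational: you prove the multiplicity count first and read off (i)$\Leftrightarrow$(ii) from positivity, whereas the paper argues the two implications separately. You also make explicit that every indecomposable nonprojective summand of $\Omega_\Lambda^n X$ is again a vertex of $\Gamma_\Omega(\Lambda)$, which the paper uses without comment.
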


\begin{proof}
We argue by induction on $n$.

\emph{The case $n = 0$.} A path of length zero from $[X]$ to $[Y]$
exists if and only if $[X] = [Y]$, that is, $X \cong Y$. Since
$\Omega_\Lambda^0 X = X$, this happens if and only if $Y$ occurs as
an indecomposable nonprojective direct summand of
$\Omega_\Lambda^0 X$.

\emph{The inductive step.} Suppose the statement holds for some
$n \ge 0$. We prove it for $n + 1$.

Suppose first that there is a directed path of length $n + 1$ from
$[X]$ to $[Y]$. Factor it as a directed path of length $n$ from
$[X]$ to some intermediate vertex $[Z]$, followed by an arrow
$[Z] \to [Y]$. By the induction hypothesis, $Z$ occurs as an
indecomposable nonprojective direct summand of $\Omega_\Lambda^n X$.
Write $\Omega_\Lambda^n X \cong Z \oplus Z'$ for some module $Z'$.
By Lemma~\ref{lem:additivity},
\[
\Omega_\Lambda^{n+1} X
= \Omega_\Lambda(\Omega_\Lambda^n X)
\cong \Omega_\Lambda Z \oplus \Omega_\Lambda Z'.
\]
Since $[Z] \to [Y]$ is an arrow of $\Gamma_\Omega(\Lambda)$, the
module $Y$ occurs as an indecomposable nonprojective direct summand
of $\Omega_\Lambda Z$. Hence $Y$ occurs as an indecomposable
nonprojective direct summand of $\Omega_\Lambda^{n+1} X$. This
proves \textup{(i)} $\Rightarrow$ \textup{(ii)}.

Conversely, suppose that $Y$ occurs as an indecomposable
nonprojective direct summand of $\Omega_\Lambda^{n+1} X$. Write
$\Omega_\Lambda^{n+1} X = \Omega_\Lambda(\Omega_\Lambda^n X)$, and
decompose $\Omega_\Lambda^n X \cong \bigoplus_i M_i$ with each $M_i$
indecomposable. By Lemma~\ref{lem:additivity},
$\Omega_\Lambda^{n+1} X \cong \bigoplus_i \Omega_\Lambda M_i$. Hence
$Y$ occurs as an indecomposable nonprojective direct summand of
$\Omega_\Lambda M_j$ for some $j$. If $M_j$ were projective, then
$\Omega_\Lambda M_j = 0$, which is impossible since $Y \neq 0$.
Therefore $M_j$ is nonprojective. Set $[M_j] = [Z]$. By the
induction hypothesis, there is a directed path of length $n$ from
$[X]$ to $[Z]$. Since $Y$ occurs as an indecomposable nonprojective
direct summand of $\Omega_\Lambda Z$, there is an arrow
$[Z] \to [Y]$ of $\Gamma_\Omega(\Lambda)$. Concatenating gives a
directed path of length $n + 1$ from $[X]$ to $[Y]$. This proves
\textup{(ii)} $\Rightarrow$ \textup{(i)}.

For the statement about multiplicities, suppose that the number of
directed walks of length $n$ from $[X]$ to $[Z]$ equals the
multiplicity $m_Z$ of $Z$ in the nonprojective part of
$\Omega_\Lambda^n X$, and that the multiplicity of the arrow
$[Z] \to [Y]$ equals $n_{Z, Y}$. Then the number of directed walks of
length $n + 1$ from $[X]$ to $[Y]$ equals
$\sum_{[Z]} m_Z n_{Z, Y}$. By additivity of $\Omega_\Lambda$ and by
the Krull--Schmidt decomposition, this sum equals the multiplicity of
$Y$ in the nonprojective part of $\Omega_\Lambda^{n+1} X$.
\end{proof}

\begin{remark}\label{rem:pathsyzygyconsequence}
Lemma~\ref{lem:pathsyzygy} has the following consequence. For a
vertex $[X]$ of $\Gamma_\Omega(\Lambda)$, the iterated syzygy
$\Omega_\Lambda^n X$ is projective if and only if there is no
directed path of length $n$ starting at $[X]$.
\end{remark}

\subsection{The height formula for global dimension}
\label{subsec:gldim}

The goal of this subsection is to prove the universal height formula.

\begin{theorem}\label{thm:universalheight}
Let $\Lambda$ be a finite dimensional $\mathbf{k}$-algebra. Then the
following hold.
\begin{enumerate}[label=\textup{(\arabic*)}]
\item For every vertex $[X]$ of $\Gamma_\Omega(\Lambda)$,
\[
\pd_\Lambda X = 1 + \operatorname{ht}([X]).
\]
\item Let $\mathcal{S}_{\mathrm{np}}$ denote the set of isomorphism
classes of nonprojective simple right $\Lambda$-modules. Then
\[
\gldim\Lambda = \sup\bigl(\{0\} \cup
\{1 + \operatorname{ht}([S]) : S \in \mathcal{S}_{\mathrm{np}}\}\bigr).
\]
\item The global dimension $\gldim\Lambda$ is finite if and only if
$\Gamma_\Omega(\Lambda)$ is finite and acyclic.
\item If $\Gamma_\Omega(\Lambda)$ is finite, then
$\gldim\Lambda = \infty$ if and only if a directed cycle of
$\Gamma_\Omega(\Lambda)$ is reachable from a simple state.
\end{enumerate}
\end{theorem}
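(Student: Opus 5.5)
The plan is to derive everything from Lemma~\ref{lem:pathsyzygy} and its consequence, Remark~\ref{rem:pathsyzygyconsequence}. Together they translate ``$\Omega_\Lambda^n X$ is projective'' into ``there is no directed path of length $n$ starting at $[X]$''.

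For part (1), let $[X]$ be a vertex. Then $X$ is nonprojective, so $\pd_\Lambda X \ge 1$. First record that the set of lengths of directed paths starting at $[X]$ is downward closed: truncating a path of length $m$ gives paths of every smaller length. If $\operatorname{ht}([X]) = h < \infty$, Remark~\ref{rem:heightcomparison} gives paths of every length $n \le h$ starting at $[X]$ and none of length $h+1$. By Remark~\ref{rem:pathsyzygyconsequence}, $\Omega_\Lambda^n X$ is therefore nonprojective for $n \le h$ and projective for $n = h+1$, so $\pd_\Lambda X = h+1$. If $\operatorname{ht}([X]) = \infty$, there are paths of every length, so no $\Omega_\Lambda^n X$ is projective, and $\pd_\Lambda X = \infty = 1 + \operatorname{ht}([X])$.

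For part (2), apply Lemma~\ref{lem:simplegldim}. A projective simple module has projective dimension $0$, which the $\{0\}$ term accounts for. A nonprojective simple $S$ is a vertex of $\Gamma_\Omega(\Lambda)$ (take $n = 0$ in Definition~\ref{def:reachable}), so part (1) gives $\pd_\Lambda S = 1 + \operatorname{ht}([S])$.

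For parts (3) and (4), the key auxiliary observation is that every vertex is reachable from a simple state. Suppose $X$ is a nonprojective summand of $\Omega_\Lambda^n S$. Then $S$ is nonprojective, since otherwise $\Omega_\Lambda^n S$ is $S$ or $0$ and would have no such summand. Lemma~\ref{lem:pathsyzygy} then gives a path of length $n$ from $[S]$ to $[X]$.

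For the forward direction of (3), suppose $\gldim\Lambda = d < \infty$. Then $\Omega_\Lambda^n S$ is projective for $n \ge d$, so every vertex is a summand of some $\Omega_\Lambda^n S$ with $n < d$. There are finitely many simples, and each of these finitely many finite dimensional syzygies has finitely many indecomposable summands by Krull--Schmidt, so $\Gamma_\Omega(\Lambda)$ is finite. A directed cycle would be reachable from some $[S]$, forcing $\operatorname{ht}([S]) = \infty$ and hence $\pd_\Lambda S = \infty$ by (1), a contradiction. For the converse, in a finite acyclic graph every path has length at most the number of vertices minus one, so all heights are bounded and (2) gives $\gldim\Lambda < \infty$.

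For (4), assume $\Gamma_\Omega(\Lambda)$ is finite. By (2) and (1), $\gldim\Lambda = \infty$ if and only if some nonprojective simple $S$ has $\operatorname{ht}([S]) = \infty$. If a directed cycle is reachable from $[S]$, traversing it repeatedly gives unbounded path lengths. Conversely, a path from $[S]$ longer than the number of vertices repeats a vertex by pigeonhole, which exhibits a reachable cycle.

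The only real subtlety I expect is in (3): finiteness of $\Gamma_\Omega(\Lambda)$ must be deduced from the uniform bound $n < d$ on the syzygy index, not from acyclicity alone. I would handle this exactly as above, via Krull--Schmidt applied to the finitely many modules $\Omega_\Lambda^n S$.
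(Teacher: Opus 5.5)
Your proposal is correct and follows the paper's proof essentially step for step: parts (1), (2), (4) and the acyclicity half of (3) are argued exactly as there. The only variation is in the finiteness half of (3). You bound the vertex set directly by the indecomposable summands of the finitely many modules $\Omega_\Lambda^n S$ with $n < d$, via Krull--Schmidt, whereas the paper builds the out-neighbour closures $W_0 \subseteq W_1 \subseteq \cdots$ using Lemma~\ref{lem:reachablelocalfinite}; both work, and yours is slightly more direct.
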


\begin{proof}
\emph{Part \textup{(1)}.} Let $[X]$ be a vertex of
$\Gamma_\Omega(\Lambda)$ and set $h = \operatorname{ht}([X])$.

Suppose first that $h < \infty$. By Definition~\ref{def:height}
there is a directed path of length $h$ starting at $[X]$, and there
is no directed path of length $h + 1$ starting at $[X]$. By
Lemma~\ref{lem:pathsyzygy}, the iterated syzygy $\Omega_\Lambda^h X$
has a nonprojective indecomposable direct summand, while
$\Omega_\Lambda^{h+1} X$ has no nonprojective indecomposable direct
summand. Hence $\Omega_\Lambda^{h+1} X$ is projective, while
$\Omega_\Lambda^h X$ is not. Therefore $\pd_\Lambda X = h + 1$.

Suppose now that $h = \infty$. By Definition~\ref{def:height}, for
every $\ell \ge 0$ there is a directed path of length $\ell$ starting
at $[X]$. By Lemma~\ref{lem:pathsyzygy}, $\Omega_\Lambda^\ell X$ has
a nonprojective indecomposable direct summand for every $\ell \ge 0$.
Hence no iterated syzygy of $X$ is projective, and
$\pd_\Lambda X = \infty$. Since $1 + \infty = \infty$, the identity
$\pd_\Lambda X = 1 + \operatorname{ht}([X])$ holds in this case as
well.

\emph{Part \textup{(2)}.} By Lemma~\ref{lem:simplegldim},
\[
\gldim\Lambda = \sup_{S \text{ simple}} \pd_\Lambda S.
\]
For a projective simple module $S$, $\pd_\Lambda S = 0$, and such a
simple contributes nothing to the supremum beyond the value $0$. For
a nonprojective simple module $S$, part \textup{(1)} gives
$\pd_\Lambda S = 1 + \operatorname{ht}([S])$. Combining these two
observations gives the displayed formula. If
$\mathcal{S}_{\mathrm{np}}$ is empty, that is, if every simple module
is projective, then the supremum of $\{0\}$ equals $0$, which is the
correct value for a semisimple algebra.

\emph{Part \textup{(3)}.} Suppose first that
$\Gamma_\Omega(\Lambda)$ is finite and acyclic. Let $N$ denote the
number of vertices of $\Gamma_\Omega(\Lambda)$. Since the quiver is acyclic, every directed path in 
$\Gamma_\Omega(\Lambda)$ has length at most $N - 1$ (with the empty quiver 
having global dimension $0$): each step along the path arrives at a vertex that
has not been visited earlier, since a repetition would produce a
directed cycle. In particular, $\operatorname{ht}(x) \le N - 1$ for
every vertex $x$ of $\Gamma_\Omega(\Lambda)$. Hence the supremum in
part \textup{(2)} is at most $N$, and $\gldim\Lambda < \infty$.

Conversely, suppose that $\gldim\Lambda = d < \infty$. For every
nonprojective simple module $S$, part \textup{(2)} gives
$\pd_\Lambda S \le d$, and part \textup{(1)} gives
$\operatorname{ht}([S]) = \pd_\Lambda S - 1 \le d - 1$.

We first show that $\Gamma_\Omega(\Lambda)$ is finite. Let $W_0$
denote the set of vertices of $\Gamma_\Omega(\Lambda)$ of the form
$[S]$ with $S$ nonprojective simple. The set $W_0$ is finite. For
$k \ge 0$, define $W_{k+1}$ as the union of $W_k$ with the set of
all out-neighbours of vertices in $W_k$. Each $W_k$ is finite by
induction on $k$, using Lemma~\ref{lem:reachablelocalfinite}. We
claim that every vertex of $\Gamma_\Omega(\Lambda)$ is contained in
$\bigcup_{k=0}^{d-1} W_k$. Indeed, let $[X]$ be a vertex. By
Definition~\ref{def:reachable}, the module $X$ is an indecomposable
nonprojective direct summand of $\Omega_\Lambda^n S$ for some simple
module $S$ and some $n \ge 0$. If $S$ is projective, then
$\Omega_\Lambda^0 S = S$ is projective and $\Omega_\Lambda^n S = 0$
for $n \ge 1$, so $S$ contributes no vertices; hence $S$ must be
nonprojective. By Lemma~\ref{lem:pathsyzygy}, there is a directed
path of length $n$ from $[S]$ to $[X]$ in $\Gamma_\Omega(\Lambda)$.
Since $[S] \in W_0$ and $\operatorname{ht}([S]) \le d - 1$, the
length $n$ satisfies $n \le d - 1$. Hence
$[X] \in W_n \subseteq \bigcup_{k=0}^{d-1} W_k$. Therefore
$\Gamma_\Omega(\Lambda)$ is contained in the finite set
$\bigcup_{k=0}^{d-1} W_k$, and is finite.

We now show that $\Gamma_\Omega(\Lambda)$ is acyclic. Suppose for
contradiction that there is a directed cycle. Every vertex of
$\Gamma_\Omega(\Lambda)$ is reachable from a nonprojective simple
state by the argument of the previous paragraph, so the cycle is
reachable from some nonprojective simple state $[S]$. Traversing the
cycle repeatedly produces directed paths from $[S]$ of arbitrary
length. Hence $\operatorname{ht}([S]) = \infty$, which by part
\textup{(1)} implies $\pd_\Lambda S = \infty$. This contradicts
$\gldim\Lambda = d < \infty$. Therefore $\Gamma_\Omega(\Lambda)$ is
acyclic.

\emph{Part \textup{(4)}.} Suppose that $\Gamma_\Omega(\Lambda)$ is
finite. If a directed cycle is reachable from a nonprojective simple
state $[S]$, then traversing the cycle repeatedly produces directed
paths from $[S]$ of arbitrary length, so
$\operatorname{ht}([S]) = \infty$, and part \textup{(2)} gives
$\gldim\Lambda = \infty$.

Conversely, suppose that $\gldim\Lambda = \infty$. By part
\textup{(2)} there is a nonprojective simple module $S$ with
$\operatorname{ht}([S]) = \infty$. Since $\Gamma_\Omega(\Lambda)$ is
finite, the set of vertices reachable from $[S]$ is finite. Because
$\operatorname{ht}([S]) = \infty$, there are directed paths from
$[S]$ of arbitrary length. Take a directed path from $[S]$ of length
strictly greater than the number of vertices reachable from $[S]$.
By the pigeonhole principle, some vertex of $\Gamma_\Omega(\Lambda)$
is visited at least twice along this path; the segment of the path
between two consecutive visits to that vertex is a directed cycle
reachable from $[S]$.
\end{proof}

\begin{remark}\label{rem:universalnotcombinatorial}
Part \textup{(2)} of Theorem~\ref{thm:universalheight} gives an exact
formula for the global dimension of an arbitrary finite dimensional
algebra in terms of the reachable syzygy quiver. The formula is not
yet a combinatorial statement, because the vertices of
$\Gamma_\Omega(\Lambda)$ are arbitrary modules. In
Section~\ref{sec:stringsyzygy} we will see that for special biserial
algebras, the reachable syzygy quiver can be replaced by a quiver
whose vertices are exclusively string modules, rendering the formula
combinatorial.
\end{remark}

\subsection{The height formula for finitistic dimension}
\label{subsec:findim}

The formula of Theorem~\ref{thm:universalheight} takes the supremum
over simple modules. The finitistic dimension requires a larger
quiver, because modules of finite projective dimension need not be
reachable from simple modules.

\begin{definition}\label{def:finitisticquiver}
The finitistic syzygy quiver of $\Lambda$, denoted
$\Gamma_\Omega^{\mathrm{fin}}(\Lambda)$, is the directed multigraph
whose vertices are the isomorphism classes $[X]$ of finitely generated indecomposable nonprojective right $\Lambda$-modules $X$ with $\pd_\Lambda X < \infty$, and whose arrow multiplicities record the
indecomposable nonprojective direct summands of the first syzygy of
each vertex. Projective summands are discarded.
\end{definition}

The definition is well posed because the vertex set is closed under
taking first syzygies. Indeed, if $X$ is an indecomposable
nonprojective right $\Lambda$-module with
$\pd_\Lambda X = n < \infty$, then $\Omega_\Lambda X$ has projective
dimension $n - 1$, and every indecomposable nonprojective direct
summand of $\Omega_\Lambda X$ has projective dimension at most
$n - 1 < \infty$. Hence every nonprojective summand of $\Omega_\Lambda X$ is finitely generated and is a vertex of $\Gamma_\Omega^{\mathrm{fin}}(\Lambda)$, and every projective summand
is discarded. The finitistic syzygy quiver is locally finite by the
same argument as in Lemma~\ref{lem:reachablelocalfinite}.

\begin{remark}\label{rem:finitisticquivercontainsreachable}
Every vertex of $\Gamma_\Omega(\Lambda)$ that has finite projective
dimension is a vertex of $\Gamma_\Omega^{\mathrm{fin}}(\Lambda)$.
The finitistic quiver may contain additional vertices that do not
occur in the iterated syzygies of any simple module.
\end{remark}

The proof of Lemma~\ref{lem:pathsyzygy} uses only the additivity of
$\Omega_\Lambda$ and the basic definition of the quiver's edges; it
therefore applies to $\Gamma_\Omega^{\mathrm{fin}}(\Lambda)$ without
change, since the vertex set of
$\Gamma_\Omega^{\mathrm{fin}}(\Lambda)$ is closed under taking
nonprojective indecomposable summands of first syzygies. We will use
this observation without further mention.

\begin{lemma}\label{lem:findimvertexfiniteheight}
Every vertex of $\Gamma_\Omega^{\mathrm{fin}}(\Lambda)$ has finite
height in $\Gamma_\Omega^{\mathrm{fin}}(\Lambda)$.
\end{lemma}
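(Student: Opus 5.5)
The plan is to prove the sharper statement that $\operatorname{ht}([X]) \le \pd_\Lambda X - 1$ for every vertex $[X]$ of $\Gamma_\Omega^{\mathrm{fin}}(\Lambda)$. Since $\pd_\Lambda X < \infty$ by the definition of the vertex set, this bound gives finiteness at once. The argument transfers Lemma~\ref{lem:pathsyzygy} to the finitistic quiver; as the paper observes just before the statement, this transfer is valid because the vertex set of $\Gamma_\Omega^{\mathrm{fin}}(\Lambda)$ is closed under taking nonprojective indecomposable summands of first syzygies.

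Fix a vertex $[X]$ and put $n = \pd_\Lambda X$. Because $X$ is nonprojective, $1 \le n < \infty$. By the definition of projective dimension, $\Omega_\Lambda^n X$ is projective. The module $\Omega_\Lambda^n X$ may also be zero, which is the case for instance when $X$ has finite length and is eventually resolved; zero also has no nonprojective summands, so it fits the argument below. Now suppose there were a directed path of length $\ell \ge n$ starting at $[X]$ in $\Gamma_\Omega^{\mathrm{fin}}(\Lambda)$, ending at some vertex $[Y]$.

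Truncating this path to its first $n$ arrows gives a directed path of length $n$ from $[X]$ to some vertex $[Z]$. The direction (i) $\Rightarrow$ (ii) of Lemma~\ref{lem:pathsyzygy}, read in $\Gamma_\Omega^{\mathrm{fin}}(\Lambda)$, then shows that $Z$ is an indecomposable nonprojective direct summand of $\Omega_\Lambda^n X$. This is impossible: by Krull--Schmidt, every indecomposable summand of the projective module $\Omega_\Lambda^n X$ is projective. Hence every directed path from $[X]$ has length at most $n-1$, and so $\operatorname{ht}([X]) \le n - 1 < \infty$.

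The only step that needs care is confirming that the induction in Lemma~\ref{lem:pathsyzygy} still runs in the new quiver. In the inductive step, the intermediate vertex $[Z]$ must itself be a vertex of $\Gamma_\Omega^{\mathrm{fin}}(\Lambda)$, and an arrow $[Z] \to [Y]$ must mean exactly that $Y$ is a nonprojective summand of $\Omega_\Lambda Z$. The first point holds because a summand of $\Omega_\Lambda^k X$ has projective dimension at most $n - k$, hence finite. The second holds by Definition~\ref{def:finitisticquiver}. I do not expect any further obstacle. The argument in fact yields equality $\pd_\Lambda X = 1 + \operatorname{ht}([X])$ exactly as in part (1) of Theorem~\ref{thm:universalheight}, but only the finiteness is required here.
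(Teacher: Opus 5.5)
Your argument is correct and is essentially the upper-bound half of the paper's proof. The paper likewise applies the analogue of Lemma~\ref{lem:pathsyzygy} in $\Gamma_\Omega^{\mathrm{fin}}(\Lambda)$ and uses projectivity of $\Omega_\Lambda^n X$ to rule out paths of length $n$; it also exhibits a path of length $n-1$ to get $\operatorname{ht}([X]) = n-1$, which, as you note, finiteness does not need.

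One harmless slip: your aside that $\Omega_\Lambda^n X$ may be zero is wrong. For $n = \pd_\Lambda X \ge 1$ it is always nonzero, since otherwise $\Omega_\Lambda^{n-1} X$ would be projective.
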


\begin{proof}
Let $[X]$ be a vertex of
$\Gamma_\Omega^{\mathrm{fin}}(\Lambda)$. By definition,
$\pd_\Lambda X = n < \infty$. We show that
$\operatorname{ht}([X]) = n - 1$.

The iterated syzygy $\Omega_\Lambda^{n-1} X$ is nonprojective: this
is clear for $n = 1$, since $\Omega_\Lambda^0 X = X$, and for
$n \ge 2$ it follows from the minimality of $n$ in the definition of
projective dimension. Every indecomposable nonprojective direct
summand of $\Omega_\Lambda^{n-1} X$ has finite projective dimension
at most $1$, hence is a vertex of
$\Gamma_\Omega^{\mathrm{fin}}(\Lambda)$. Choose such a summand $Y$.
Then $[Y]$ has out-degree $0$ in
$\Gamma_\Omega^{\mathrm{fin}}(\Lambda)$: indeed,
$\Omega_\Lambda Y$ is a direct summand of $\Omega_\Lambda^n X$,
which is projective, so $\Omega_\Lambda Y$ is projective, and every
indecomposable summand of $\Omega_\Lambda Y$ is projective and is
discarded. Hence $\operatorname{ht}([Y]) = 0$.

By the analogue of Lemma~\ref{lem:pathsyzygy} for
$\Gamma_\Omega^{\mathrm{fin}}(\Lambda)$, there is a directed path of
length $n - 1$ from $[X]$ to $[Y]$ in
$\Gamma_\Omega^{\mathrm{fin}}(\Lambda)$. On the other hand, the
iterated syzygy $\Omega_\Lambda^n X$ is projective, so it has no
nonprojective indecomposable direct summand, and by the same
analogue of Lemma~\ref{lem:pathsyzygy} there is no directed path of
length $n$ starting at $[X]$. Hence
$\operatorname{ht}([X]) = n - 1$, which is finite.
\end{proof}

\begin{theorem}\label{thm:findimheight}
Let $\Lambda$ be a finite dimensional $\mathbf{k}$-algebra. Then
\[
\findim\Lambda
= \sup\bigl(\{0\} \cup \{1 + \operatorname{ht}([X]) :
[X] \in \Gamma_\Omega^{\mathrm{fin}}(\Lambda)\}\bigr).
\]
In particular, $\findim\Lambda < \infty$ if and only if the set of
heights of the vertices of
$\Gamma_\Omega^{\mathrm{fin}}(\Lambda)$ is bounded.
\end{theorem}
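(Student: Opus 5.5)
The plan is to compare the two suprema directly, using Lemma~\ref{lem:findimvertexfiniteheight}. Its proof shows more than finiteness: every vertex $[X]$ of $\Gamma_\Omega^{\mathrm{fin}}(\Lambda)$ satisfies $\operatorname{ht}([X]) = \pd_\Lambda X - 1$. So
\[
1 + \operatorname{ht}([X]) = \pd_\Lambda X
\]
for every vertex. The right-hand side of the displayed formula is therefore $\sup(\{0\}\cup\{\pd_\Lambda X\})$, where $X$ ranges over the finitely generated indecomposable nonprojective modules of finite projective dimension. It then remains to show that this equals $\findim\Lambda$.

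\emph{Inequality $\le$.} Each such $X$ is itself a finitely generated module of finite projective dimension, so $\pd_\Lambda X \le \findim\Lambda$. Since $\findim\Lambda \ge \pd_\Lambda 0$ or $\pd_\Lambda\Lambda = 0$, the value $0$ in the supremum is also dominated.

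\emph{Inequality $\ge$.} Let $M$ be finitely generated with $\pd_\Lambda M < \infty$. Decompose $M \cong \bigoplus_i M_i$ into indecomposables by Krull--Schmidt. By Lemma~\ref{lem:additivity}, $\Omega^n_\Lambda M \cong \bigoplus_i \Omega^n_\Lambda M_i$. A direct sum is projective if and only if each summand is, so
\[
\pd_\Lambda M = \max_i \pd_\Lambda M_i.
\]
In particular each $\pd_\Lambda M_i$ is finite. The projective summands contribute $0$, and the nonprojective ones are vertices of $\Gamma_\Omega^{\mathrm{fin}}(\Lambda)$. Hence $\pd_\Lambda M$ is bounded by the right-hand side. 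If $M = 0$, or $M$ is projective, the term $\{0\}$ covers it.

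The ``in particular'' clause follows at once. $\findim\Lambda$ is finite exactly when the set $\{1+\operatorname{ht}([X])\}$ is bounded, which is the same as the set of heights being bounded. Each individual height is already finite by Lemma~\ref{lem:findimvertexfiniteheight}, so there is no $\infty$ term to worry about.

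The only delicate point is making sure the height in Lemma~\ref{lem:findimvertexfiniteheight} is computed in $\Gamma_\Omega^{\mathrm{fin}}(\Lambda)$ and equals exactly $\pd_\Lambda X - 1$, rather than merely being finite. This requires the analogue of Lemma~\ref{lem:pathsyzygy} for the finitistic quiver, which the paper notes holds because the vertex set is closed under nonprojective summands of syzygies. I expect this to be the main point to verify carefully. Everything else is formal bookkeeping with Krull--Schmidt and additivity of $\Omega_\Lambda$.
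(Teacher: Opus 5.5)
Your proposal is correct and follows essentially the same route as the paper. First, Lemma~\ref{lem:findimvertexfiniteheight} (whose proof gives $\operatorname{ht}([X]) = \pd_\Lambda X - 1$ exactly) identifies the right-hand side with the supremum of $\pd_\Lambda X$ over indecomposable nonprojective modules of finite projective dimension. Then you pass to arbitrary modules via Krull--Schmidt and $\pd_\Lambda(\bigoplus_i M_i) = \max_i \pd_\Lambda M_i$, which you justify more explicitly through Lemma~\ref{lem:additivity} than the paper does.
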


\begin{proof}
Let $M$ be a finitely generated indecomposable right
$\Lambda$-module of finite projective dimension. If $M$ is projective, then
$\pd_\Lambda M = 0$. If $M$ is nonprojective, then $M$ is a vertex
of $\Gamma_\Omega^{\mathrm{fin}}(\Lambda)$ by
Definition~\ref{def:finitisticquiver}, and
Lemma~\ref{lem:findimvertexfiniteheight} gives
$\pd_\Lambda M = 1 + \operatorname{ht}([M])$.

Conversely, every indecomposable module $X$ whose isomorphism class
is a vertex of $\Gamma_\Omega^{\mathrm{fin}}(\Lambda)$ has finite
projective dimension by definition, and
$1 + \operatorname{ht}([X]) = \pd_\Lambda X$ by
Lemma~\ref{lem:findimvertexfiniteheight}.

Therefore
\[
\sup\{\pd_\Lambda M : M \text{ indecomposable, }
\pd_\Lambda M < \infty\}
= \sup\{1 + \operatorname{ht}([X]) :
[X] \in \Gamma_\Omega^{\mathrm{fin}}(\Lambda)\}.
\]

Since every finitely generated module of finite projective dimension is a finite direct sum of indecomposable modules of finite projective dimension, and since the
projective dimension of a finite direct sum equals the maximum of
the projective dimensions of its summands, the supremum over all
modules of finite projective dimension equals the supremum over the
indecomposable ones. This proves the displayed formula.

The finiteness criterion follows. The supremum is finite if and only
if the set of heights of the vertices of
$\Gamma_\Omega^{\mathrm{fin}}(\Lambda)$ is bounded. By
Lemma~\ref{lem:findimvertexfiniteheight}, every vertex has finite
height, so the set of heights is finite if and only if it is
bounded, which is the case if and only if the projective dimensions
of the indecomposable modules of finite projective dimension are
bounded above.
\end{proof}

\begin{remark}\label{rem:gldimvsfindim}
If $\gldim\Lambda < \infty$, then every finitely generated right $\Lambda$-module has
finite projective dimension, so $\Gamma_\Omega(\Lambda)$ is a full subquiver of
$\Gamma_\Omega^{\mathrm{fin}}(\Lambda)$. The two quivers need not coincide, but their
height suprema agree, and Theorems~\ref{thm:universalheight}
and~\ref{thm:findimheight} give $\findim\Lambda = \gldim\Lambda$.
This is consistent with Remark~\ref{rem:gldimvsfindimbasic}. If
$\gldim\Lambda = \infty$, the finitistic dimension may be either
finite or infinite, as the examples in
Section~\ref{sec:examples} show.
\end{remark}

\begin{remark}\label{rem:notquiver}
The reachable syzygy quiver $\Gamma_\Omega(\Lambda)$ of
Definition~\ref{def:reachable} has as its vertices isomorphism
classes of modules, not combinatorial objects such as paths or
strings. For a direct computation of the global dimension from the
bound quiver, one requires a class of algebras for which the
following two conditions hold.
\begin{enumerate}[label=\textup{(\alph*)}]
\item The first syzygy of every module appearing in the construction
decomposes into a finite direct sum of modules from a class that can
be described combinatorially from the bound quiver.
\item The projective covers, the radicals, and the syzygy
transitions can be computed explicitly from the bound quiver within
this class.
\end{enumerate}
In Section~\ref{sec:stringsyzygy} we will see that special biserial
algebras satisfy both conditions, with the class of string modules
playing the role of the combinatorial state space.
\end{remark}

\section{The string-syzygy quiver of a special biserial algebra}
\label{sec:stringsyzygy}

In this section we specialise the universal formula of
Section~\ref{sec:universal} to finite dimensional special biserial
algebras. The key input is Proposition~\ref{thm:liumorin}, which
states that the first syzygy of a string module decomposes into a
projective part and a finite direct sum of string modules. Using this
closure property we replace the arbitrary module states of the
reachable syzygy quiver by string modules, and show that the global
dimension is given by heights in a combinatorial quiver. For the
finitistic dimension we obtain a lower bound in general and an
equality when the global dimension is finite. We also construct an
algorithm that produces the transition from a string module to its
nonprojective string summands directly from the bound quiver, and we
record a consequence for symmetric special biserial algebras and
Brauer graph algebras.

Theorems~\ref{thm:gldimspecialbis} and~\ref{thm:findimspecialbis}
below together establish the statements of
Theorem~\ref{thm:specialbis}. The proof in each case is independent
of the numbering in the introduction.

We follow the conventions of Sections~\ref{sec:background}
and~\ref{sec:universal}. Throughout, $\Lambda = \mathbf{k}Q/I$ is a
finite dimensional special biserial algebra.

\subsection{The string-syzygy quiver}\label{subsec:stringsyzygyquiver}

We define the combinatorial quiver that replaces the reachable
syzygy quiver of Definition~\ref{def:reachable} in the special
biserial case.

\begin{definition}\label{def:stringsyzygyquiver}
Let $\Lambda$ be a finite dimensional special biserial algebra. The
string-syzygy quiver of $\Lambda$, denoted
$\Gamma_\Omega^{\mathrm{str}}(\Lambda)$, is the directed multigraph
defined as follows.
\begin{enumerate}[label=\textup{(\roman*)}]
\item A vertex of $\Gamma_\Omega^{\mathrm{str}}(\Lambda)$ is an
isomorphism class $[X]$ of an indecomposable nonprojective string
module $X$ that occurs as a direct summand of $\Omega_\Lambda^n S$
for some simple right $\Lambda$-module $S$ and some $n \ge 0$.
\item The multiplicity of an arrow $[X] \to [Y]$ is the multiplicity
of $Y$ among the indecomposable nonprojective string summands of
$\Omega_\Lambda X$.
\end{enumerate}
Projective summands of $\Omega_\Lambda X$ are discarded.
\end{definition}

The definition is well posed. Every simple right $\Lambda$-module is
a string module by Example~\ref{ex:stringdiagramsimple}, and by
Proposition~\ref{thm:liumorin} every indecomposable nonprojective
summand of an iterated syzygy of a string module is again a string
module. The following proposition makes the relation between the two
quivers precise.

\begin{proposition}\label{prop:universalequalsstring}
Let $\Lambda$ be a finite dimensional special biserial algebra. Then
the reachable syzygy quiver $\Gamma_\Omega(\Lambda)$ of
Definition~\ref{def:reachable} and the string-syzygy quiver
$\Gamma_\Omega^{\mathrm{str}}(\Lambda)$ of
Definition~\ref{def:stringsyzygyquiver} have the same vertex set and
the same arrow multiplicities. In particular, the two quivers
coincide as directed multigraphs.
\end{proposition}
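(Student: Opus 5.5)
The plan is to compare the two definitions directly: first the vertex sets, then the arrow multiplicities. Corollary~\ref{cor:stringclosure} is the only real input.

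\emph{Vertex sets.} By Definition~\ref{def:stringsyzygyquiver}, every vertex $[X]$ of $\Gamma_\Omega^{\mathrm{str}}(\Lambda)$ is the class of an indecomposable nonprojective summand of some $\Omega_\Lambda^n S$ with $S$ simple. Hence it is a vertex of $\Gamma_\Omega(\Lambda)$ by Definition~\ref{def:reachable}. Conversely, let $[X]$ be a vertex of $\Gamma_\Omega(\Lambda)$, so $X$ is an indecomposable nonprojective summand of $\Omega_\Lambda^n S$. By Example~\ref{ex:stringdiagramsimple}, $S$ is a simple string module. Corollary~\ref{cor:stringclosure} then says $X$ is a string module, so $[X]$ is a vertex of $\Gamma_\Omega^{\mathrm{str}}(\Lambda)$. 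Isomorphism classes are taken in the same category of right $\Lambda$-modules in both quivers, so the two vertex sets coincide.

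\emph{Arrows.} Fix a common vertex $[X]$, which is a string module. By Proposition~\ref{thm:liumorin} we can write
\[
\Omega_\Lambda X \cong P \oplus X_1 \oplus \cdots \oplus X_m
\]
with $P$ projective and each $X_j$ an indecomposable string module. Decompose $P$ further into indecomposable projectives. By Krull--Schmidt, the multiset of indecomposable nonprojective summands of $\Omega_\Lambda X$ is well defined. It equals the multiset of the nonprojective $X_j$, since the summands of $P$ are projective.

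\begin{itemize}
\item In $\Gamma_\Omega(\Lambda)$, the multiplicity of $[X]\to[Y]$ counts $Y$ among all indecomposable nonprojective summands of $\Omega_\Lambda X$.
\item In $\Gamma_\Omega^{\mathrm{str}}(\Lambda)$, it counts $Y$ among the nonprojective \emph{string} summands.
\end{itemize}

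These counts agree because every nonprojective summand is a string module. We should also check that every such target $Y$ is again a vertex. This holds because a summand of $\Omega_\Lambda X$ is a summand of $\Omega_\Lambda^{n+1}S$, using Lemma~\ref{lem:additivity}. So arrow multiplicities agree for all pairs of vertices.

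\emph{Main obstacle.} The delicate point is Krull--Schmidt uniqueness. Being a string module is a property of the isomorphism class, so "string summands" is unambiguous. An indecomposable nonprojective summand could a priori be a band module or a non-uniserial projective-injective, as in Theorem~\ref{thm:butlerringel}. Liu--Morin rules out the band case. The projective-injective ones are projective and are discarded in both quivers. With this observation, the counts match and the quivers coincide as directed multigraphs.
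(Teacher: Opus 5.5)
Your proposal is correct and follows essentially the same route as the paper. The vertex inclusion is immediate from the definitions in one direction and follows from Corollary~\ref{cor:stringclosure} in the other, and the arrow multiplicities agree because every indecomposable nonprojective summand of $\Omega_\Lambda X$ is a string module. The differences are cosmetic: you apply Proposition~\ref{thm:liumorin} directly to the string vertex $X$ together with Krull--Schmidt where the paper re-invokes Corollary~\ref{cor:stringclosure}, and you state explicitly that arrow targets are again vertices, which the paper leaves implicit.
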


\begin{proof}
Every vertex of $\Gamma_\Omega^{\mathrm{str}}(\Lambda)$ is a vertex
of $\Gamma_\Omega(\Lambda)$ by definition, since a string module that
occurs as a direct summand of $\Omega_\Lambda^n S$ is an
indecomposable nonprojective module that occurs as a direct summand
of the same syzygy.

Conversely, let $[X]$ be a vertex of $\Gamma_\Omega(\Lambda)$. By
Definition~\ref{def:reachable}, $X$ is an indecomposable
nonprojective right $\Lambda$-module that occurs as a direct summand
of $\Omega_\Lambda^n S$ for some simple right $\Lambda$-module $S$
and some $n \ge 0$. By Corollary~\ref{cor:stringclosure}, $X$ is a
string module. Hence $[X]$ is a vertex of
$\Gamma_\Omega^{\mathrm{str}}(\Lambda)$.

The equality of arrow multiplicities follows because both quivers
record the multiplicities of the indecomposable nonprojective
summands of the first syzygy of each vertex, and by
Corollary~\ref{cor:stringclosure} every such summand is a string
module.
\end{proof}

\begin{remark}\label{rem:stringsyzygylocalfinite}
Let $X$ be a vertex of $\Gamma_\Omega^{\mathrm{str}}(\Lambda)$,
represented by a string module $M(W)$ with string diagram $W$. The
out-degree of $[X]$ in $\Gamma_\Omega^{\mathrm{str}}(\Lambda)$ is the
number of indecomposable nonprojective string summands of
$\Omega_\Lambda X$. By Proposition~\ref{thm:liumorin} this number is
finite. In particular, the string-syzygy quiver is locally finite.
\end{remark}

\subsection{The exact formula for global dimension}
\label{subsec:gldimstringsyzygy}

We state and prove the formula for the global dimension of a special
biserial algebra.

\begin{theorem}\label{thm:gldimspecialbis}
Let $\Lambda = \mathbf{k}Q/I$ be a finite dimensional special
biserial algebra. Then the following hold.
\begin{enumerate}[label=\textup{(\arabic*)}]
\item For every vertex $[X]$ of
$\Gamma_\Omega^{\mathrm{str}}(\Lambda)$ one has
\[
\pd_\Lambda X
= 1 + \operatorname{ht}_{\Gamma_\Omega^{\mathrm{str}}(\Lambda)}([X]).
\]
\item $\gldim\Lambda = \sup\bigl(\{0\} \cup
\{1 + \operatorname{ht}_{\Gamma_\Omega^{\mathrm{str}}(\Lambda)}([S_i]) :
S_i \text{ a nonprojective simple string module}\}\bigr)$.
\item The global dimension $\gldim\Lambda$ is finite if and only if
the string-syzygy quiver
$\Gamma_\Omega^{\mathrm{str}}(\Lambda)$ is finite and acyclic.
\item If $\Gamma_\Omega^{\mathrm{str}}(\Lambda)$ is finite, then
$\gldim\Lambda = \infty$ if and only if a directed cycle of
$\Gamma_\Omega^{\mathrm{str}}(\Lambda)$ is reachable from a simple
state.
\end{enumerate}
\end{theorem}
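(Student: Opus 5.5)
The plan is to deduce all four parts from the universal height formula, Theorem~\ref{thm:universalheight}, by transporting it along the identification of Proposition~\ref{prop:universalequalsstring}. That proposition shows that $\Gamma_\Omega(\Lambda)$ and $\Gamma_\Omega^{\mathrm{str}}(\Lambda)$ have the same vertex set and the same arrow multiplicities. The first step is therefore to note that directed paths, and hence the height function of Definition~\ref{def:height}, depend only on the underlying directed multigraph. It follows that
\[
\operatorname{ht}_{\Gamma_\Omega^{\mathrm{str}}(\Lambda)}([X]) = \operatorname{ht}_{\Gamma_\Omega(\Lambda)}([X])
\]
for every vertex $[X]$. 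The same invariance holds for the properties ``finite'', ``acyclic'', and ``a directed cycle is reachable from a given vertex''.

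With this in hand, each part follows by substitution into the corresponding part of Theorem~\ref{thm:universalheight}. Part (1) is part (1) of that theorem combined with the equality of heights. For part (2), I would first observe via Example~\ref{ex:stringdiagramsimple} that every simple right $\Lambda$-module is a simple string module. Then the set of nonprojective simple string modules is exactly $\mathcal{S}_{\mathrm{np}}$, and each $[S]$ with $S\in\mathcal{S}_{\mathrm{np}}$ is a vertex of both quivers (take $n=0$). Part (2) of the universal theorem then transfers verbatim. Parts (3) and (4) follow in the same way, since the identification is an isomorphism of directed multigraphs that fixes the simple states.

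The only point needing care is that the proposition really gives an equality of quivers and not merely an inclusion. In particular, the arrows of $\Gamma_\Omega^{\mathrm{str}}(\Lambda)$ must not omit any nonprojective non-string summand. This is exactly where Corollary~\ref{cor:stringclosure}, and through it Proposition~\ref{thm:liumorin}, enters. It guarantees that every indecomposable nonprojective summand of $\Omega_\Lambda X$ for a vertex $X$ is again a string module, because $X$ itself is a summand of some $\Omega^n_\Lambda S$. So the restriction to string summands in Definition~\ref{def:stringsyzygyquiver}(ii) discards nothing. Since this has already been recorded in Proposition~\ref{prop:universalequalsstring}, I expect no genuine obstacle: the proof is a formal transfer, with the mathematical content residing in the Liu--Morin closure theorem.
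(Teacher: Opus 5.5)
Your proposal is correct and follows the paper's own proof: the paper likewise invokes Proposition~\ref{prop:universalequalsstring} to identify $\Gamma_\Omega(\Lambda)$ and $\Gamma_\Omega^{\mathrm{str}}(\Lambda)$ as directed multigraphs, then reads off parts (1)--(4) from Theorem~\ref{thm:universalheight}, noting that the nonprojective simple states are exactly the nonprojective simple string modules. Your additional remarks simply make explicit what the paper leaves implicit: that height, finiteness, acyclicity and reachability depend only on the multigraph, and that Corollary~\ref{cor:stringclosure} is what makes the arrow sets agree.
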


\begin{proof}
By Proposition~\ref{prop:universalequalsstring}, the reachable
syzygy quiver $\Gamma_\Omega(\Lambda)$ and the string-syzygy quiver
$\Gamma_\Omega^{\mathrm{str}}(\Lambda)$ coincide as directed
multigraphs. The height of a vertex is computed in the same graph, so
the height formula of Theorem~\ref{thm:universalheight}\textup{(1)}
applies without change and gives
\[
\pd_\Lambda X = 1 + \operatorname{ht}([X])
\]
for every vertex $[X]$ of $\Gamma_\Omega^{\mathrm{str}}(\Lambda)$.
This is part \textup{(1)}. The formula for the global dimension in
part \textup{(2)} is obtained from
Theorem~\ref{thm:universalheight}\textup{(2)} by restricting the
supremum to the nonprojective simple states, which are precisely the
simple string modules. Parts \textup{(3)} and \textup{(4)} follow
from Theorem~\ref{thm:universalheight}\textup{(3)} and \textup{(4)}
together with the identification of the two quivers.
\end{proof}

\begin{remark}\label{rem:gldimspecialbisalgorithm}
Theorem~\ref{thm:gldimspecialbis} gives a combinatorial formula for
the global dimension of a special biserial algebra. Unlike the
universal formula of Theorem~\ref{thm:universalheight}, whose states
are general indecomposable modules, the formula here uses only
string modules as states. These can be computed from the bound quiver using
Proposition~\ref{thm:liumorin}, as we describe in
Section~\ref{subsec:algorithm}.
\end{remark}

\subsection{The string-syzygy bound for finitistic dimension}
\label{subsec:findimstringsyzygy}

The finitistic dimension requires a quiver whose vertices include all
string modules of finite projective dimension. We define this quiver
and obtain a lower bound that becomes an equality whenever the
global dimension is finite.

\begin{definition}\label{def:finitisticstringsyzygy}
Let $\Lambda$ be a finite dimensional special biserial algebra. The
finitistic string-syzygy quiver of $\Lambda$, denoted
$\Gamma_\Omega^{\mathrm{str,fin}}(\Lambda)$, is the directed
multigraph whose vertices are the isomorphism classes of
indecomposable nonprojective string modules $X$ with
$\pd_\Lambda X < \infty$, and whose arrow multiplicities record the
indecomposable nonprojective string summands of the first syzygy of
each vertex.
\end{definition}

The definition is well posed because the vertex set is closed under
taking first syzygies. If $X$ is an indecomposable nonprojective
string module with $\pd_\Lambda X < \infty$, then every
indecomposable nonprojective summand of $\Omega_\Lambda X$ is a
string module by Proposition~\ref{thm:liumorin} and has projective
dimension at most $\pd_\Lambda X - 1 < \infty$.

\begin{lemma}\label{lem:findimvertexfiniteheightstr}
Every vertex $[X]$ of $\Gamma_\Omega^{\mathrm{str,fin}}(\Lambda)$
has finite height in $\Gamma_\Omega^{\mathrm{str,fin}}(\Lambda)$,
and
\[
\pd_\Lambda X = 1 +
\operatorname{ht}_{\Gamma_\Omega^{\mathrm{str,fin}}(\Lambda)}([X]).
\]
\end{lemma}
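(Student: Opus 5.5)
The plan is to mirror the proof of Lemma~\ref{lem:findimvertexfiniteheight}, with Proposition~\ref{thm:liumorin} used to keep every module in the argument inside the string class. First I check that the analogue of Lemma~\ref{lem:pathsyzygy} holds for $\Gamma_\Omega^{\mathrm{str,fin}}(\Lambda)$. The proof of Lemma~\ref{lem:pathsyzygy} uses only two ingredients: additivity of $\Omega_\Lambda$ (Lemma~\ref{lem:additivity}) and the fact that the vertex set is closed under taking indecomposable nonprojective summands of first syzygies. For the string version, the closure property is exactly the well-posedness remark after Definition~\ref{def:finitisticstringsyzygy}. Let $X$ be a string module with $\pd_\Lambda X<\infty$. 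By Proposition~\ref{thm:liumorin}, every indecomposable nonprojective summand of $\Omega_\Lambda X$ is a string module, and it has projective dimension at most $\pd_\Lambda X-1$. The Krull--Schmidt step in the inductive argument also goes through, because the arrows of $\Gamma_\Omega^{\mathrm{str,fin}}(\Lambda)$ record all nonprojective summands of $\Omega_\Lambda X$, and these are all string modules. The conclusion is that, for vertices $[X],[Y]$, there is a directed path of length $k$ from $[X]$ to $[Y]$ if and only if $Y$ is an indecomposable nonprojective summand of $\Omega_\Lambda^k X$.

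Next, fix a vertex $[X]$ and set $n=\pd_\Lambda X$. Since $X$ is nonprojective, $1\le n<\infty$. By minimality of $n$, the module $\Omega_\Lambda^{n-1}X$ is not projective. It therefore has an indecomposable nonprojective summand $Y$. Applying Proposition~\ref{thm:liumorin} iteratively (equivalently, Corollary~\ref{cor:stringclosure} with $X$ in place of $S$), $Y$ is a string module. It has finite projective dimension, since $\Omega_\Lambda Y$ is a summand of the projective module $\Omega_\Lambda^n X$. So $[Y]$ is a vertex, and the path analogue gives a directed path of length $n-1$ from $[X]$ to $[Y]$.

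On the other hand, $\Omega_\Lambda^n X$ is projective, so it has no nonprojective summands. By the path analogue, no directed path of length $n$ starts at $[X]$. Any longer path would contain an initial segment of length $n$, so no such paths exist either. Hence $\operatorname{ht}([X])=n-1$, which is finite, and $\pd_\Lambda X=1+\operatorname{ht}([X])$.

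The main obstacle is the first step. One has to make sure that iterating Proposition~\ref{thm:liumorin} applies to arbitrary string vertices and not only to simple modules, so that no non-string summand (a band module or a projective-injective module) ever appears in a syzygy of a vertex. If one did appear, the path analogue would lose edges and the height could drop below $n-1$. Proposition~\ref{thm:liumorin} is stated for arbitrary finite dimensional string modules, so inducting on $k$ together with Lemma~\ref{lem:additivity} should settle this. Projective summands, including any projective-injective ones, cause no trouble because they are discarded in any case.
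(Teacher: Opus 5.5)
Your proof is correct, and it rests on the same key fact as the paper's: Proposition~\ref{thm:liumorin} makes $\Gamma_\Omega^{\mathrm{str,fin}}(\Lambda)$ closed under taking indecomposable nonprojective summands of first syzygies. The paper packages this more economically: it notes that $\Gamma_\Omega^{\mathrm{str,fin}}(\Lambda)$ is a full subquiver of $\Gamma_\Omega^{\mathrm{fin}}(\Lambda)$ closed under out-neighbours, so directed paths from $[X]$, and hence heights, agree in the two quivers, and it then cites Lemma~\ref{lem:findimvertexfiniteheight} instead of re-running that lemma's argument inside the string quiver as you do.
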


\begin{proof}
By Proposition~\ref{thm:liumorin},
$\Gamma_\Omega^{\mathrm{str,fin}}(\Lambda)$ is a full subquiver of
$\Gamma_\Omega^{\mathrm{fin}}(\Lambda)$ that is closed under
out-neighbours. Consequently, the directed paths originating at a
string vertex $[X]$ in $\Gamma_\Omega^{\mathrm{str,fin}}(\Lambda)$
are identical to those in $\Gamma_\Omega^{\mathrm{fin}}(\Lambda)$.
The claim then follows immediately from
Lemma~\ref{lem:findimvertexfiniteheight}.
\end{proof}

\begin{theorem}\label{thm:findimspecialbis}
Let $\Lambda = \mathbf{k}Q/I$ be a finite dimensional special
biserial algebra. Then
\[
\findim\Lambda \ge \sup\bigl(\{0\} \cup
\{1 + \operatorname{ht}_{\Gamma_\Omega^{\mathrm{str,fin}}(\Lambda)}([X]) :
[X] \in \Gamma_\Omega^{\mathrm{str,fin}}(\Lambda)\}\bigr).
\]
If $\gldim\Lambda < \infty$, then equality holds.
\end{theorem}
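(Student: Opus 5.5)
The inequality will come from Lemma~\ref{lem:findimvertexfiniteheightstr} together with the definition of the finitistic dimension. For every vertex $[X]$ of $\Gamma_\Omega^{\mathrm{str,fin}}(\Lambda)$, the module $X$ is a finitely generated indecomposable string module with $\pd_\Lambda X < \infty$. The lemma gives $\pd_\Lambda X = 1 + \operatorname{ht}_{\Gamma_\Omega^{\mathrm{str,fin}}(\Lambda)}([X])$. Each such value is therefore the projective dimension of a module of finite projective dimension, and so it is at most $\findim\Lambda$. The value $0$ is trivially bounded by $\findim\Lambda$, since $\findim\Lambda \ge 0$ (it is witnessed by $\Lambda$ itself). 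Taking suprema gives the displayed inequality. Alternatively, one can phrase this via Theorem~\ref{thm:findimheight}: $\Gamma_\Omega^{\mathrm{str,fin}}(\Lambda)$ is a full subquiver of $\Gamma_\Omega^{\mathrm{fin}}(\Lambda)$ closed under out-neighbours, so heights agree, and a supremum over a subset is at most the supremum over the whole vertex set.

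For the equality under $\gldim\Lambda = d < \infty$, I would show that the right-hand side is at least $d$. It is bounded above by $\findim\Lambda = \gldim\Lambda = d$ by the first part and Remark~\ref{rem:gldimvsfindimbasic}. The case $d = 0$ is trivial because the supremum contains $0$. If $d \ge 1$, Lemma~\ref{lem:simplegldim} provides a simple module $S$ with $\pd_\Lambda S = d$. This $S$ is nonprojective, it is a string module by Example~\ref{ex:stringdiagramsimple}, and it has finite projective dimension. Hence $[S]$ is a vertex of $\Gamma_\Omega^{\mathrm{str,fin}}(\Lambda)$. By Lemma~\ref{lem:findimvertexfiniteheightstr}, $1 + \operatorname{ht}([S]) = d$, so the supremum is at least $d$ and equality follows.

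I expect no serious obstacle, since everything reduces to results already established. The only point that needs care is checking that the heights computed in $\Gamma_\Omega^{\mathrm{str,fin}}(\Lambda)$ are the correct ones, that is, that out-neighbours of string vertices are again string vertices of finite projective dimension. This is exactly the closure under out-neighbours given by Proposition~\ref{thm:liumorin}, which is already packaged in Lemma~\ref{lem:findimvertexfiniteheightstr}. I would not try to prove equality without finiteness of the global dimension. In that case a module of maximal finite projective dimension could be a band module, or a summand that is not reachable from string modules, so only the lower bound is claimed.
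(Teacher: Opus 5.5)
Your proof is correct and follows essentially the same route as the paper. The lower bound comes from Lemma~\ref{lem:findimvertexfiniteheightstr}, since each vertex contributes the projective dimension of a module of finite projective dimension. Equality under $\gldim\Lambda<\infty$ comes from the nonprojective simple string modules being vertices of $\Gamma_\Omega^{\mathrm{str,fin}}(\Lambda)$, combined with $\findim\Lambda=\gldim\Lambda$.

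The one small difference is an improvement. You pick a simple module of maximal projective dimension via Lemma~\ref{lem:simplegldim} and read its height directly in $\Gamma_\Omega^{\mathrm{str,fin}}(\Lambda)$. The paper instead passes through Theorem~\ref{thm:gldimspecialbis}(2), whose heights are taken in $\Gamma_\Omega^{\mathrm{str}}(\Lambda)$ and must tacitly be identified with heights in the finitistic quiver. Your version avoids that identification.
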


\begin{proof}
Since $\Gamma_\Omega^{\mathrm{str,fin}}(\Lambda)$ is a subquiver of
$\Gamma_\Omega^{\mathrm{fin}}(\Lambda)$, and by
Lemma~\ref{lem:findimvertexfiniteheightstr} every vertex $[X]$ of
$\Gamma_\Omega^{\mathrm{str,fin}}(\Lambda)$ satisfies
$\pd_\Lambda X = 1 + \operatorname{ht}([X])$, the module $X$ is an
indecomposable module of finite projective dimension with
$\pd_\Lambda X = 1 + \operatorname{ht}([X])$. Taking the supremum
over all vertices yields the lower bound, by
Theorem~\ref{thm:findimheight}.

We turn to the equality when $\gldim\Lambda < \infty$. In this case
$\findim\Lambda = \gldim\Lambda$ by
Remark~\ref{rem:gldimvsfindim}. Every nonprojective simple string module is a vertex of $\Gamma_\Omega^{\mathrm{str,fin}}(\Lambda)$, since its
projective dimension is at most $\gldim\Lambda < \infty$.
Consequently, by Theorem~\ref{thm:gldimspecialbis}\textup{(2)},
\begin{align*}
\gldim\Lambda
&= \sup\bigl(\{0\} \cup \{1 + \operatorname{ht}([S_i]) :
S_i \text{ a nonprojective simple string module}\}\bigr) \\
&\le \sup\bigl(\{0\} \cup \{1 + \operatorname{ht}([X]) :
[X] \in \Gamma_\Omega^{\mathrm{str,fin}}(\Lambda)\}\bigr).
\end{align*}

Conversely, every vertex $[X]$ of
$\Gamma_\Omega^{\mathrm{str,fin}}(\Lambda)$ satisfies
$\pd_\Lambda X \le \gldim\Lambda$ by the definition of the global
dimension, and hence $1 + \operatorname{ht}([X]) \le \gldim\Lambda$.
Therefore the right-hand side is at most $\gldim\Lambda$, and the
two inequalities give equality.
\end{proof}

\begin{remark}\label{rem:findimspecialbisreachable}
The finitistic string-syzygy quiver may contain vertices that are
not reachable from the simple modules. For the global dimension this
does not matter, because by Lemma~\ref{lem:simplegldim} it suffices
to consider the simple states. For the finitistic dimension, all
string modules of finite projective dimension must be considered,
including those that do not appear in the minimal resolutions of
simple modules. Equality is stated under the stronger hypothesis $\gldim\Lambda < \infty$, which guarantees that every string module has finite projective dimension.
\end{remark}

\begin{remark}\label{rem:findimspecialbisgldim}
If $\gldim\Lambda < \infty$, then $\findim\Lambda = \gldim\Lambda$.
This is a general property of finite dimensional algebras, recorded
in Remark~\ref{rem:gldimvsfindim}; for special biserial algebras it
follows from Theorem~\ref{thm:findimspecialbis} together with
Theorem~\ref{thm:gldimspecialbis}.
\end{remark}

\subsection{Computing a transition from the bound quiver}
\label{subsec:algorithm}

We describe the construction that computes the string-syzygy quiver
directly from the bound quiver presentation. The construction is a
direct transcription of the proof of
Proposition~\ref{thm:liumorin}, and can be implemented
algorithmically.

\begin{construction}[One string-syzygy transition]
\label{con:stringtransition}
Let $\Lambda = \mathbf{k}Q/I$ be a finite dimensional special
biserial algebra and let $W$ be a string diagram with associated
string module $X = M(W)$. The following procedure computes the
indecomposable nonprojective string summands of $\Omega_\Lambda X$.
\begin{enumerate}[label=\textup{(\arabic*)}]
\item Identify the source vertices $x_1, \ldots, x_r$ of the quiver
$W$ (those of in-degree $0$ in $W$); the residue classes
$\overline{z}_{x_1}, \ldots, \overline{z}_{x_r}$ form a canonical
$\mathbf{k}$-basis of the top $X / X\rad\Lambda$, where $z_{x_j}$
lies at quiver vertex $i_j = \ell(x_j)$.
\item Form the projective cover
\[
P(X) = \bigoplus_{j=1}^r P_{i_j},
\]
and define the surjection $\pi : P(X) \to X$ by sending the
generator $e_{i_j}$ of the $j$-th summand to $z_{x_j}$.
\item Express each $P_{i_j}$ in a path basis modulo $I$. The matrix
of $\pi$ is determined by following the labelled edges of $W$ and
reducing paths using the relations in $I$.
\item Compute the kernel $\ker \pi$ by finite linear algebra over
$\mathbf{k}$.
\item By Proposition~\ref{thm:liumorin}, the nonprojective
components of this kernel are string diagrams $W_1, \ldots, W_m$
such that
\[
\Omega_\Lambda M(W)
\cong P \oplus M(W_1) \oplus \cdots \oplus M(W_m),
\]
where $P$ is projective.
\item Insert an arrow $[W] \to [W_j]$ in
$\Gamma_\Omega^{\mathrm{str}}(\Lambda)$ for each $j$, retaining the
multiplicity.
\end{enumerate}
\end{construction}

\begin{remark}\label{rem:binomialconstruct}
Step~\textup{(4)} of Construction~\ref{con:stringtransition} is
where a binomial relation affects the computation. If
$p - \lambda q \in I$, then the kernel can contain a vector of the
form $u - \lambda v$ supported on two distinct path branches.
Replacing the binomial relation by the two zero relations
$p = 0 = q$ changes the kernel and may change the global dimension;
we illustrate this in Section~\ref{sec:binomial}.
\end{remark}

\begin{algorithm}[Breadth-first computation]\label{alg:breadthfirst}
Let $\Lambda = \mathbf{k}Q/I$ be a finite dimensional special
biserial algebra. The following procedure computes the global
dimension of $\Lambda$.
\begin{enumerate}[label=\textup{(\arabic*)}]
\item Initialise a queue with all nonprojective simple string
diagrams.
\item When a state $[W]$ is removed from the queue, apply
Construction~\ref{con:stringtransition} and discard projective
summands. For each nonprojective string summand $[W_j]$, add the
transition arrow $[W] \to [W_j]$ to the graph; if $[W_j]$ has not
been visited previously, record $[W_j]$ as a vertex and push it onto
the queue. If a newly added arrow creates a directed cycle reachable
from a simple state, output $\gldim\Lambda = \infty$ and terminate.
\item If the queue empties, the reachable part of the
string-syzygy quiver is finite. Topologically order it and set
\[
d(W) := 1 + \max\{d(Y) : [W] \to [Y]\},
\]
with $d(W) = 1$ for a sink.
\item Output $\max\{d(S) : S \text{ a nonprojective simple}\}$, or
$0$ if there is no such simple.
\end{enumerate}
\end{algorithm}

\begin{remark}\label{rem:algorithmtermination}
If the string-syzygy quiver is finite and acyclic,
Algorithm~\ref{alg:breadthfirst} computes its height, terminates,
and outputs the exact value of the global dimension. If a directed
cycle is reachable from a simple state, the algorithm outputs
$\gldim\Lambda = \infty$ and terminates. If the quiver is infinite
and acyclic, then by Theorem~\ref{thm:gldimspecialbis} the global
dimension is infinite, but the algorithm does not terminate. The
algorithm therefore terminates if and only if the global dimension
is finite or a directed cycle is detected.
\end{remark}

\subsection{Symmetric special biserial algebras and Brauer graph algebras}\label{subsec:symmetric}

We record a consequence of the results above for symmetric
algebras. Recall that a finite dimensional $\mathbf{k}$-algebra
$\Lambda$ is symmetric if there is an isomorphism
$\Lambda \cong \Lambda^\ast$ of $\Lambda$-$\Lambda$-bimodules,
where $\Lambda^\ast = \Hom_{\mathbf{k}}(\Lambda, \mathbf{k})$ is the
$\mathbf{k}$-dual. A symmetric algebra is self-injective.

\begin{corollary}\label{cor:symmetricinfinite}
Let $\Lambda$ be a finite dimensional symmetric special biserial
algebra. If $\Lambda$ is not semisimple, then
$\gldim\Lambda = \infty$. In particular, every nonsemisimple
Brauer graph algebra has infinite global dimension.
\end{corollary}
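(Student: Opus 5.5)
The plan is to use self-injectivity directly, and then read the result through Theorem~\ref{thm:gldimspecialbis}. A symmetric algebra is self-injective, so projective and injective right $\Lambda$-modules coincide.

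First, I will show that over a self-injective algebra every nonprojective module $X$ has $\pd_\Lambda X = \infty$. Suppose instead that $\pd_\Lambda X = n$ with $1 \le n < \infty$. Take a minimal projective resolution
\[
0 \to P_n \to P_{n-1} \to \cdots \to P_0 \to X \to 0.
\]
Since $P_n$ is injective, the monomorphism $P_n \to P_{n-1}$ splits. Its image is then a direct summand of $P_{n-1}$ lying inside $\rad P_{n-1}$, because the resolution is minimal. A nonzero direct summand cannot lie in the radical, so $P_n = 0$. This contradicts minimality of $n$.

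Equivalently, $\Omega_\Lambda^{n-1}X$ is nonprojective while $\Omega_\Lambda^{n}X$ is projective, hence injective. The sequence
\[
0 \to \Omega_\Lambda^n X \to P_{n-1} \to \Omega_\Lambda^{n-1} X \to 0
\]
therefore splits, which makes $\Omega_\Lambda^{n-1}X$ a direct summand of a projective module. This is a contradiction. In the language of the paper, this shows that no vertex of $\Gamma_\Omega(\Lambda)$ has finite height.

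Next I will produce a nonprojective simple module. Since $\Lambda$ is not semisimple, $\rad\Lambda \neq 0$, so some $P_v$ has $\rad P_v \ne 0$. Then $S_v$ is nonprojective, because $P_v \to S_v$ is a projective cover with nonzero kernel. By the first step $\pd_\Lambda S_v = \infty$, and Lemma~\ref{lem:simplegldim} gives $\gldim\Lambda = \infty$.

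To phrase this through the string-syzygy quiver, I note that $[S_v]$ is a vertex of $\Gamma_\Omega^{\mathrm{str}}(\Lambda)$. Theorem~\ref{thm:gldimspecialbis}(1) then forces $\operatorname{ht}([S_v]) = \infty$. Part (3) of that theorem then says the quiver is infinite or contains a cycle. In fact, every string state has a nonprojective string successor, since $\Omega_\Lambda X$ is nonprojective whenever $X$ is.

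For Brauer graph algebras, I use that they are symmetric special biserial, which is standard. A nonsemisimple one therefore falls under the first part.

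The main obstacle is the splitting step: I must make sure that minimality places the image of $P_n$ inside the radical, so that a split summand there is forced to vanish. The cleanest route is the second formulation, which needs only that a projective submodule of a projective module over a self-injective algebra splits off. That formulation avoids any radical argument.
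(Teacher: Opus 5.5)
Your proof is correct and follows essentially the same route as the paper: self-injectivity makes $0 \to \Omega_\Lambda^n X \to P_{n-1} \to \Omega_\Lambda^{n-1}X \to 0$ split, so every module of finite projective dimension is projective, and a nonprojective simple (which exists because $\rad\Lambda \neq 0$) therefore has infinite projective dimension. The reformulation through $\Gamma_\Omega^{\mathrm{str}}(\Lambda)$ is harmless but not needed; the paper instead records the byproduct $\findim\Lambda = 0$.
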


\begin{proof}
We first show that for a symmetric algebra $\Lambda$, every right
$\Lambda$-module of finite projective dimension is projective. Let
$M$ be a right $\Lambda$-module of finite projective dimension $n$.
If $n = 0$, then $M$ is projective. Assume for contradiction that
$n \ge 1$, and consider a minimal projective resolution
\[
0 \to P_n \xrightarrow{d_n} P_{n-1} \to \cdots \to P_1 \to P_0
\to M \to 0.
\]
Let $\Omega_\Lambda^{n-1} M := \operatorname{coker} d_n$, which
coincides with $M$ when $n = 1$. Since $\pd_\Lambda M = n$, the
module $\Omega_\Lambda^{n-1} M$ is not projective, while its first
syzygy $\Omega_\Lambda^n M \cong P_n$ is projective.

Because $\Lambda$ is symmetric, it is self-injective, so the
projective module $P_n$ is also injective. Consequently, the short
exact sequence
\[
0 \to P_n \xrightarrow{d_n} P_{n-1} \to \Omega_\Lambda^{n-1} M
\to 0
\]
splits, implying that $\Omega_\Lambda^{n-1} M$ is isomorphic to a
direct summand of $P_{n-1}$ and hence is projective. This
contradicts the assumption that $\pd_\Lambda M = n$. Therefore
$n = 0$, so $M$ is projective.

It follows that $\findim\Lambda = 0$. If $\Lambda$ is not
semisimple, then there exists a nonprojective simple right
$\Lambda$-module $S$; by the argument above, $\pd_\Lambda S$ cannot
be finite, so $\pd_\Lambda S = \infty$, and
$\gldim\Lambda = \infty$.

The assertion for Brauer graph algebras follows because a Brauer
graph algebra is a symmetric special biserial algebra; see
\cite[Theorem~1.1]{Sch15}.
\end{proof}

\begin{remark}\label{rem:brauersymmetric}
Corollary~\ref{cor:symmetricinfinite} shows that no nontrivial
Brauer graph algebra has finite global dimension. This is
consistent with the fact that Brauer graph algebras are
self-injective, and self-injective algebras of finite global
dimension are semisimple. The corollary also applies to the trivial
extensions of gentle algebras studied in \cite{Sch15}.
\end{remark}

\section{The relation-chain quiver of a string algebra}
\label{sec:stringalgebra}

In Section~\ref{sec:stringsyzygy} we showed that for a special
biserial algebra the reachable syzygy quiver can be replaced by the
string-syzygy quiver, whose vertices are string modules. For a string
algebra the compression goes further: the state space reduces to a
quiver whose vertices are nonzero paths. We recover the
relation-chain formula of Xin and Zhang for the global dimension,
and we obtain a corresponding formula for the finitistic dimension.
The compression uses the monomiality of the ideal of relations. When
all relations are paths, the kernel of the map
$P_{t(p)} \to P_{s(p)}$ sending $x$ to $px$ is generated by minimal
right annihilators of $p$, and each such annihilator is again a path.
Once binomial relations are allowed, the kernel may be generated by a
linear combination of paths, and the path-state compression fails; we
illustrate this in Section~\ref{sec:binomial}.

Throughout this section $\Lambda = \mathbf{k}Q/I$ is a finite
dimensional string algebra, so that $I$ is generated by paths. We
follow the conventions of Sections~\ref{sec:background}
and~\ref{sec:universal}.

\subsection{Minimal path annihilators}\label{subsec:annihilators}

Let $\operatorname{Pa}(\Lambda)$ denote the set of residue classes of
paths in $\mathbf{k}Q$ that do not belong to $I$, including the
trivial paths $e_v$ for $v \in Q_0$. Let
$\operatorname{Pa}_{>0}(\Lambda)$ denote the subset of nontrivial
paths. Since $I$ is monomial, the nontrivial elements of
$\operatorname{Pa}(\Lambda)$ together with the trivial paths form a
$\mathbf{k}$-basis of $\Lambda$.

For $p \in \operatorname{Pa}_{>0}(\Lambda)$, let $\mathcal{M}(p)$
denote the set of minimal right annihilators of $p$, as in the
following definition.

\begin{definition}\label{def:minimalannihilator}
Let $p, u \in \operatorname{Pa}_{>0}(\Lambda)$ with $t(p) = s(u)$.
We say that $u$ is a minimal right annihilator of $p$ if
\[
pu = 0 \text{ in } \Lambda,
\]
and no proper nontrivial initial subpath $u'$ of $u$ satisfies
$pu' = 0$.
\end{definition}

\begin{remark}\label{rem:annihilatorfinite}
Since $\Lambda$ is finite dimensional and $I$ is monomial, the set of
paths not in $I$ is finite. Hence the set of minimal right
annihilators of a given path $p$ is finite.
\end{remark}

\begin{definition}\label{def:relationchain}
A minimal relation chain is a finite or infinite sequence
\[
W = (w_1, w_2, w_3, \ldots)
\]
of nontrivial paths in $\operatorname{Pa}_{>0}(\Lambda)$ such that
$w_{j+1} \in \mathcal{M}(w_j)$ for every $j$. The length of $W$ is
the number of entries; for a finite chain of length $m$ we write
$|W| = m$.
\end{definition}

We will primarily be interested in chains whose initial term is an
arrow.

\begin{definition}\label{def:relationchainquiver}
The relation-chain quiver of $\Lambda$, denoted
$\mathcal{R}(\Lambda)$, is the directed multigraph whose vertices are
the nonzero paths reached from arrows by repeatedly taking minimal
right annihilators, and whose arrows are
\[
p \longrightarrow u
\]
whenever $u$ is a minimal right annihilator of $p$.
\end{definition}

\begin{remark}\label{rem:relationchainfinite}
Since $\Lambda$ is finite dimensional, only finitely many paths
survive modulo $I$, so $\mathcal{R}(\Lambda)$ is finite.
\end{remark}

The following lemma will be used in the proof of
Proposition~\ref{prop:relationchainsyzygy}.

\begin{lemma}\label{lem:atmosttwo}
For every $p \in \operatorname{Pa}_{>0}(\Lambda)$, the set
$\mathcal{M}(p)$ has at most two elements.
\end{lemma}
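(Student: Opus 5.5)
To bound $|\mathcal{M}(p)|$, I would let the first arrow of a minimal right annihilator range over the arrows leaving $t(p)$, and show that the first arrow determines the annihilator. Let $p \in \operatorname{Pa}_{>0}(\Lambda)$ and let $u \in \mathcal{M}(p)$, written $u = \beta_1\beta_2\cdots\beta_k$ with $s(\beta_1) = t(p)$. By \textup{(SB1)} of Definition~\ref{def:specialbis}, at most two arrows start at $t(p)$. So it suffices to show that for each arrow $\beta$ with $s(\beta) = t(p)$ there is at most one $u \in \mathcal{M}(p)$ whose first arrow is $\beta$.

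Fix such a $\beta$ and suppose $u = \beta u_1$ and $v = \beta v_1$ both lie in $\mathcal{M}(p)$. I would first show that one of $u$, $v$ is an initial subpath of the other. Both $u$ and $v$ are nonzero in $\Lambda$ by definition of $\operatorname{Pa}_{>0}(\Lambda)$. The key observation is the following consequence of \textup{(SB2)}: a nonzero path in a string algebra is determined by its first arrow and its length. Indeed, given a nonzero path $\gamma_1\cdots\gamma_j$, each $\gamma_{i+1}$ must satisfy $\gamma_i\gamma_{i+1} \notin I$, since $I$ is an ideal and the length-two subpath $\gamma_i\gamma_{i+1}$ lies in $I$ would force the whole path into $I$. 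By \textup{(SB2)} there is at most one such arrow. Induction on the position then shows that the arrows of $u$ and $v$ agree as long as both are defined. Hence, if $|u| \le |v|$, the path $u$ is an initial subpath of $v$.

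Now minimality finishes the argument. Suppose $u$ is an initial subpath of $v$ with $u \neq v$. Then $u$ is a proper nontrivial initial subpath of $v$ with $pu = 0$, which contradicts $v \in \mathcal{M}(p)$ by Definition~\ref{def:minimalannihilator}. Therefore $u = v$. Combining this with the bound on arrows leaving $t(p)$ gives $|\mathcal{M}(p)| \le 2$.

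The only delicate point is the determinism step, where I invoke \textup{(SB2)} for consecutive arrows inside a path not in $I$. Here I must use that a subpath of a path not in $I$ is again not in $I$. This holds because $I$ is a two-sided ideal: if a subpath lay in $I$, multiplying it on both sides by the remaining arrows would put the whole path in $I$. Monomiality of $I$ is not even needed for this step; it only ensures that paths form a basis, which Definition~\ref{def:minimalannihilator} already presupposes.
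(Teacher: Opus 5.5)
Your proof is correct and is essentially the paper's argument: the paper takes two distinct minimal right annihilators with the same first arrow and uses minimality to see that their longest common initial subpath is proper in both. The last arrow of that subpath then has two continuations outside $I$, contradicting \textup{(SB2)}; this is your determinism-plus-minimality argument run in the opposite order, and your uniform observation that every length-two subpath of a path $u \notin I$ lies outside $I$ covers what the paper splits into two cases.
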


\begin{proof}
Let $u, u'$ be two distinct minimal right annihilators of $p$
beginning with the same arrow $\alpha$. Let $w$ be the longest common
initial subpath of $u$ and $u'$. Since $u \ne u'$ and both begin with $\alpha$, we have $|w| \ge 1$. 
Neither $u$ nor $u'$ can be a proper initial subpath of the other, 
by the minimality condition in Definition~\ref{def:minimalannihilator}; 
hence $w$ is a proper initial subpath of both, and the decompositions 
below are valid. Write
\[
u = w\beta u_1, \qquad u' = w\gamma u_2,
\]
where $\beta, \gamma$ are distinct arrows starting at $t(w)$, and let
$\eta$ be the last arrow of $w$. We claim that $\eta\beta \notin I$
and $\eta\gamma \notin I$, which would violate condition
\textup{(SB2)} of Definition~\ref{def:specialbis} at $\eta$ and
contradict the hypothesis that $\Lambda$ is special biserial.

\emph{The claim $\eta\beta \notin I$.} Suppose for contradiction that
$\eta\beta \in I$. We distinguish two cases.

\emph{Case \textup{(i)}: $u_1$ non-trivial.} Then $w\beta$ is a
proper initial subpath of $u$, so by the minimality of $u$ we have
$pw\beta \ne 0$. Since $I$ is a two-sided ideal in $\mathbf{k}Q$ and
$\eta\beta \in I$, the path $w\beta = w'\eta\beta$ lies in $I$,
where $w = w'\eta$. Hence $pw\beta \in I$, contradicting
$pw\beta \ne 0$.

\emph{Case \textup{(ii)}: $u_1$ trivial, so $u = w\beta$.} Since
$u \in \operatorname{Pa}_{>0}(\Lambda)$, the path $u = w\beta$ does
not lie in $I$. The path $\eta\beta$ is a subpath of $w\beta = u$.
If $\eta\beta \in I$, then $\eta\beta$ contains a generator of the
monomial ideal $I$ as a subpath. The only subpaths of the two-arrow
path $\eta\beta$ are $\eta$, $\beta$, and $\eta\beta$ itself. Since
$\eta$ and $\beta$ are arrows, they do not lie in $I$ (the ideal $I$
is admissible, so $I \subseteq (\mathbf{k}Q^+)^2$). Hence
$\eta\beta$ itself must be a generator of $I$, forcing
$u = w\beta \in I$, contradicting
$u \in \operatorname{Pa}_{>0}(\Lambda)$.

Both cases give a contradiction, so $\eta\beta \notin I$. The same
argument applied to $u'$ gives $\eta\gamma \notin I$.

We have shown that $\eta$ admits two distinct continuations
$\beta, \gamma$ with both $\eta\beta, \eta\gamma \notin I$. This
contradicts condition \textup{(SB2)}. Therefore distinct minimal
right annihilators of $p$ begin with distinct arrows. By condition
\textup{(SB1)}, at most two arrows leave $t(p)$, and hence
$|\mathcal{M}(p)| \le 2$.
\end{proof}

\subsection{The kernel of a principal path map}
\label{subsec:kernel}

Let $p \in \operatorname{Pa}_{>0}(\Lambda)$. Consider the right
$\Lambda$-module map
\[
\mu_p : P_{t(p)} \longrightarrow P_{s(p)}, \qquad x \longmapsto px.
\]
The image of $\mu_p$ is the principal right ideal $p\Lambda$, which
we regard as a right $\Lambda$-module generated by $p$.

\begin{lemma}\label{lem:kernel}
For $p \in \operatorname{Pa}_{>0}(\Lambda)$ one has
\[
\ker \mu_p \cong \bigoplus_{u \in \mathcal{M}(p)} u\Lambda,
\]
where $\mathcal{M}(p)$ is the set of minimal right annihilators of
$p$.
\end{lemma}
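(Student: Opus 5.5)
I will show that $\ker\mu_p$ is spanned by the paths it contains, that these paths are exactly the nonzero paths having some $u\in\mathcal{M}(p)$ as an initial subpath, and that the right ideals $u\Lambda$ for distinct $u\in\mathcal{M}(p)$ intersect trivially.

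\emph{Step 1: the kernel is spanned by paths.} Since $I$ is monomial, $P_{t(p)}$ has the $\mathbf{k}$-basis $\operatorname{Pa}(\Lambda)\cap e_{t(p)}\Lambda$. Take a basis path $x$. Then $px$ is either zero or a nonzero basis path of $P_{s(p)}$. Suppose $x\neq x'$ and both $px$ and $px'$ are nonzero. Then $px\neq px'$ as paths in $Q$, because left cancellation of the common prefix $p$ recovers $x$. Hence $\mu_p$ sends basis elements either to $0$ or injectively to distinct basis elements. It follows that $\ker\mu_p$ is spanned by the basis paths $x$ with $px=0$: if a linear combination of paths is killed, the surviving images are linearly independent, so every coefficient on a non-annihilated path vanishes.

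\emph{Step 2: each annihilated path extends a unique minimal annihilator.} Let $x$ be a basis path with $px=0$. The trivial path is not such an $x$, because $p\neq0$. Let $u$ be the shortest nontrivial initial subpath of $x$ with $pu=0$. Then $u\in\mathcal{M}(p)$, we can write $x=uv$, and so $x\in u\Lambda$. Conversely, every element of $u\Lambda$ lies in $\ker\mu_p$. Therefore $\ker\mu_p=\sum_{u\in\mathcal{M}(p)}u\Lambda$.

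\emph{Step 3: the sum is direct.} This is the step I expect to be the main obstacle. Each $u\Lambda$ is spanned by the nonzero paths $uv$. By the proof of Lemma~\ref{lem:atmosttwo}, distinct elements of $\mathcal{M}(p)$ begin with distinct arrows, and there are at most two of them. So for $u\neq u'$ in $\mathcal{M}(p)$, the path bases of $u\Lambda$ and $u'\Lambda$ are disjoint sets of paths, because their first arrows differ. Since both sets lie inside the path basis of $\Lambda$, their union is linearly independent, and hence $u\Lambda\cap u'\Lambda=0$. This gives the internal direct sum $\ker\mu_p=\bigoplus_{u\in\mathcal{M}(p)}u\Lambda$ of right submodules, which proves the isomorphism.

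The subtle point is the appeal to Lemma~\ref{lem:atmosttwo}, which is where (SB2) is used. Without it, two minimal annihilators $u=w\beta u_1$ and $u'=w\gamma u_2$ with a common prefix $w$ could produce overlapping spans; in fact, they could not, because a shared path would force $u$ and $u'$ to be comparable prefixes of it. I would note that this comparability argument also gives directness on its own. If a nonzero path $x$ lay in both $u\Lambda$ and $u'\Lambda$, then $u$ and $u'$ would both be initial subpaths of $x$. One of them would then be an initial subpath of the other, and minimality forces $u=u'$. I will use this cleaner argument as the primary one, so that Step 3 relies only on monomiality and the minimality in Definition~\ref{def:minimalannihilator}.
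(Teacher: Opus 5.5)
Your proof is correct. Steps 1 and 2 follow the paper's proof. The paper simply asserts that membership in $\ker\mu_p$ can be tested path by path; your observation that $\mu_p$ sends distinct non-annihilated basis paths to distinct basis paths supplies the justification it leaves out.

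The real difference is in the directness step. Your first version of Step 3 is the paper's argument. The paper uses the proof of Lemma~\ref{lem:atmosttwo} to see that distinct minimal right annihilators begin with distinct arrows at $t(p)$. It then places the ideals $u\Lambda$ in distinct summands of $\rad P_{t(p)} = \bigoplus_{\alpha}\alpha\Lambda$, and this relies on condition (SB2).

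Your primary argument is different. A nonzero path lying in both $u\Lambda$ and $u'\Lambda$ has $u$ and $u'$ as initial subpaths, so they are comparable, and minimality forces $u=u'$. This uses only monomiality and Definition~\ref{def:minimalannihilator}. So it proves the lemma for an arbitrary monomial algebra, and (SB2) is needed only for the separate bound $|\mathcal{M}(p)|\le 2$. Without (SB2), two minimal annihilators can indeed share a first arrow, yet your argument still applies.

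The path bases of the various $u\Lambda$ are pairwise disjoint subsets of one basis of $\Lambda$, so their union is linearly independent. This gives directness of the whole sum, not merely pairwise zero intersections, and independently of how many summands there are. The paper's route instead gives directness together with the distribution of the summands over distinct first arrows, but at the cost of the special biserial hypothesis.
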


\begin{proof}
The image of $\mu_p$ is $p\Lambda$ by definition. Since $I$ is
monomial, the nontrivial nonzero paths together with the trivial
paths form a $\mathbf{k}$-basis of $\Lambda$. An element
$x \in P_{t(p)} = e_{t(p)}\Lambda$ belongs to $\ker \mu_p$ exactly
when every path basis term occurring in $x$ is killed by left
multiplication by $p$.

Let $v$ be a nonzero path starting at $t(p)$ with $pv = 0$. Choose
the shortest nontrivial initial subpath $u$ of $v$ such that
$pu = 0$. Then $u$ is a minimal right annihilator of $p$ by
Definition~\ref{def:minimalannihilator}, and $v = ur$ for some path
$r$ in $\Lambda$, so $v \in u\Lambda$. This shows that the kernel is
the sum of the right ideals $u\Lambda$ for $u \in \mathcal{M}(p)$.

We show that this sum is direct. By the proof of
Lemma~\ref{lem:atmosttwo}, distinct minimal right annihilators of $p$
begin with distinct arrows at $t(p)$. The radical of $P_{t(p)}$
decomposes as the direct sum $\bigoplus_\alpha \alpha\Lambda$ over
the arrows $\alpha$ starting at $t(p)$. Since the paths in
$u\Lambda$ and $u'\Lambda$ for distinct minimal right annihilators
belong to disjoint summands of this direct sum, their intersection is
zero. Therefore their sum is direct:
\[
\ker \mu_p = \bigoplus_{u \in \mathcal{M}(p)} u\Lambda.
\qedhere
\]
\end{proof}

\begin{remark}\label{rem:projectivecoverofpA}
Lemma~\ref{lem:kernel} shows in particular that the map
$\mu_p : P_{t(p)} \to p\Lambda$ is a projective cover of $p\Lambda$.
Indeed, the map is surjective by definition of $p\Lambda$, and its
kernel lies in $\rad P_{t(p)}$ because every element of the kernel
has all path components of positive length.
\end{remark}

\subsection{Relation chains as the syzygy unfolding}
\label{subsec:syzygyunfolding}

We identify the successive syzygies of a simple module with the
successive vertices of the relation-chain quiver.

\begin{proposition}\label{prop:relationchainsyzygy}
For every vertex $v \in Q_0$ one has
\[
\Omega_\Lambda S_v = \bigoplus_{\substack{\alpha \in Q_1 \\
s(\alpha) = v}} \alpha\Lambda.
\]
For every nonzero path $p \in \operatorname{Pa}_{>0}(\Lambda)$ one
has
\[
\Omega_\Lambda(p\Lambda) \cong
\bigoplus_{u \in \mathcal{M}(p)} u\Lambda.
\]
For a vertex $u$ of $\mathcal{R}(\Lambda)$, the principal right ideal 
$u\Lambda$ is projective if and only if $u$ is a terminal vertex of the 
relation-chain quiver $\mathcal{R}(\Lambda)$. Consequently, the rooted unfolding of $\mathcal{R}(\Lambda)$ from
the arrows leaving $v$ records all principal-ideal summands in the
minimal projective resolution of $S_v$, with projective summands
occurring as terminal leaves.
\end{proposition}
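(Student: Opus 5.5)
The plan is to prove the three assertions in order, each from the projective-cover formalism of Section~\ref{subsec:syzygies} together with Lemma~\ref{lem:kernel} and Remark~\ref{rem:projectivecoverofpA}, and then to obtain the unfolding statement by induction using Lemma~\ref{lem:additivity}.

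\emph{Syzygy of a simple module.} The canonical map $P_v = e_v\Lambda \to S_v$ is a projective cover, with kernel $\rad P_v = e_v\rad\Lambda$. Since $I$ is monomial, the nonzero paths form a $\mathbf{k}$-basis of $\Lambda$. Every nontrivial nonzero path starting at $v$ begins with a unique arrow $\alpha$ with $s(\alpha)=v$. Hence $\rad P_v$ has a basis that is partitioned according to the first arrow, and each block spans $\alpha\Lambda$. The sum $\sum_\alpha \alpha\Lambda$ is therefore direct, which gives the first displayed formula.

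\emph{Syzygy of a principal ideal.} For $p\in\operatorname{Pa}_{>0}(\Lambda)$, Remark~\ref{rem:projectivecoverofpA} says that $\mu_p\colon P_{t(p)}\to p\Lambda$ is a projective cover. Its kernel lies in $\rad P_{t(p)}$ because $pe_{t(p)}=p\neq 0$, so any kernel element has zero coefficient on $e_{t(p)}$. By definition of the syzygy and uniqueness of projective covers, $\Omega_\Lambda(p\Lambda)\cong\ker\mu_p$. Lemma~\ref{lem:kernel} then identifies this kernel with $\bigoplus_{u\in\mathcal{M}(p)}u\Lambda$.

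\emph{Projectivity criterion.} Let $u$ be a vertex of $\mathcal{R}(\Lambda)$. Then $u\neq 0$, so $u\Lambda\neq 0$, and $\mu_u\colon P_{t(u)}\to u\Lambda$ is a projective cover. If $u\Lambda$ is projective, its projective cover is an isomorphism, so $\ker\mu_u=0$. By Lemma~\ref{lem:kernel} this forces $\mathcal{M}(u)=\varnothing$, that is, $u$ is terminal. Conversely, if $u$ is terminal then $\mathcal{M}(u)=\varnothing$, so $\ker\mu_u=0$ and $u\Lambda\cong P_{t(u)}$ is projective.

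The point I expect to need the most care is this step. One must check that ``terminal in $\mathcal{R}(\Lambda)$'' means exactly $\mathcal{M}(u)=\varnothing$, since the arrows of $\mathcal{R}(\Lambda)$ out of $u$ are precisely the elements of $\mathcal{M}(u)$, all of which are again vertices. One must also check that no summand $u\Lambda$ is zero, so that the projective cover argument applies. Both hold because annihilators are taken in $\operatorname{Pa}_{>0}(\Lambda)$.

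\emph{Unfolding.} Finally, I argue by induction on $n$, applying Lemma~\ref{lem:additivity} at each stage. The claim is that $\Omega^{n}_\Lambda S_v$ is the direct sum of the $u\Lambda$ over the end-vertices $u$ of directed paths of length $n-1$ in $\mathcal{R}(\Lambda)$ starting at an arrow $\alpha$ with $s(\alpha)=v$, counted with multiplicity. The first formula gives the base case $n=1$. For the inductive step, the second formula replaces each summand $u\Lambda$ by the sum over its out-neighbours. Summands indexed by terminal vertices are projective by the criterion, so their syzygy is zero and they appear as leaves. Hence the rooted unfolding records exactly the principal-ideal summands of the minimal resolution.
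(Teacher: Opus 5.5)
Your proposal is correct and follows the paper's proof step for step. The steps are the same: split the path basis of $\rad P_v$ by first arrow, use $\mu_p$ as a projective cover together with Lemma~\ref{lem:kernel}, characterise projectivity of $u\Lambda$ by $\ker\mu_u=0 \iff \mathcal{M}(u)=\varnothing$, and iterate using Lemma~\ref{lem:additivity}. Your explicit induction indexing the summands of $\Omega^n_\Lambda S_v$ by directed paths of length $n-1$ from arrows leaving $v$ is simply a more detailed version of the paper's ``iterate the second assertion.''
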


\begin{proof}
We prove the three assertions in turn.

\emph{First assertion.} The radical of $P_v$ has the monomial path
basis consisting of the paths of positive length starting at $v$,
partitioned by the first arrow of the path. Hence
\[
\rad P_v = \bigoplus_{\substack{\alpha \in Q_1 \\
s(\alpha) = v}} \alpha\Lambda,
\]
and $\Omega_\Lambda S_v = \rad P_v$ because $P_v \to S_v$ is a
projective cover with kernel $\rad P_v$.

\emph{Second assertion.} The map
$\mu_p : P_{t(p)} \to p\Lambda$ is a projective cover of $p\Lambda$
by Remark~\ref{rem:projectivecoverofpA}, and its kernel is
$\bigoplus_{u \in \mathcal{M}(p)} u\Lambda$ by
Lemma~\ref{lem:kernel}. Hence
\[
\Omega_\Lambda(p\Lambda) \cong \ker \mu_p
\cong \bigoplus_{u \in \mathcal{M}(p)} u\Lambda.
\]

\emph{Third assertion.} A principal right ideal $u\Lambda$ for a vertex $u \in \operatorname{Pa}_{>0}(\Lambda)$ is projective if and only if
$u\Lambda \cong P_w$ for some $w \in Q_0$, equivalently if and only
if the projective cover $\mu_u : P_{t(u)} \to u\Lambda$ has zero
kernel. By Lemma~\ref{lem:kernel} this happens if and only if
$\mathcal{M}(u) = \emptyset$, that is, if and only if $u$ has no
minimal right annihilator. By
Definition~\ref{def:relationchainquiver}, this is exactly the
condition that $u$ is a terminal vertex of $\mathcal{R}(\Lambda)$.

\emph{Fourth assertion.} The final statement follows by iterating
the second assertion, using Lemma~\ref{lem:additivity} and the first
assertion to initiate the iteration.
\end{proof}

\begin{remark}\label{rem:relationchainterminal}
The third assertion of Proposition~\ref{prop:relationchainsyzygy}
makes precise the sense in which the relation-chain quiver unfolds
the minimal projective resolution of $S_v$. Starting from the
arrows leaving $v$, one follows the arrows of $\mathcal{R}(\Lambda)$;
the resolution terminates at the point where all paths in the
current layer are terminal. A chain of length $m$ in
$\mathcal{R}(\Lambda)$ beginning with an arrow corresponds to a
summand that survives through $m$ layers of the resolution and
contributes $m$ to the projective dimension of the corresponding
simple module.
\end{remark}

\subsection{The Xin--Zhang relation-chain formula} \label{subsec:zhengzhang}

We state and prove the relation-chain formula for the global
dimension of a string algebra.

\begin{theorem}[Xin--Zhang \cite{ZZ26}]\label{thm:zhengzhang}
Let $\Lambda = \mathbf{k}Q/I$ be a finite dimensional string algebra.
Then
\[
\gldim\Lambda = \sup\bigl(\{0\} \cup
\{|W| : W \text{ a minimal relation chain beginning with an arrow}\}\bigr),
\]
where $|W| = \infty$ for an infinite chain. Equivalently,
\[
\gldim\Lambda = \sup\bigl(\{0\} \cup
\{1 + \operatorname{ht}_{\mathcal{R}(\Lambda)}(\alpha) :
\alpha \in Q_1\}\bigr).
\]
Furthermore $\gldim\Lambda = \infty$ if and only if there is an
infinite minimal relation chain beginning with an arrow. Since
$\mathcal{R}(\Lambda)$ is finite, this is equivalent to a directed
cycle of $\mathcal{R}(\Lambda)$ being reachable from an arrow.
\end{theorem}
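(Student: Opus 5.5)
The plan is to deduce the theorem from the universal height formula by computing projective dimensions through the principal-ideal states of Proposition~\ref{prop:relationchainsyzygy}.

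First I would show that, for every vertex $p$ of $\mathcal{R}(\Lambda)$,
\[
\pd_\Lambda(p\Lambda) = \operatorname{ht}_{\mathcal{R}(\Lambda)}(p),
\]
with the convention that a terminal vertex gives the projective module $p\Lambda$ and so projective dimension $0$. By the second assertion of Proposition~\ref{prop:relationchainsyzygy} and Lemma~\ref{lem:additivity}, iterating gives $\Omega^n_\Lambda(p\Lambda)\cong\bigoplus u\Lambda$. The sum runs over the endpoints $u$ of directed paths of length $n$ starting at $p$ in $\mathcal{R}(\Lambda)$, counted with multiplicity as walks. This is the same induction as in Lemma~\ref{lem:pathsyzygy}, with $\mathcal{R}(\Lambda)$ in place of $\Gamma_\Omega$.

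Next I would establish the key point that this sum is projective exactly when every such endpoint is terminal, which is the third assertion. If $\operatorname{ht}(p)=h<\infty$, then every walk of length $h$ ends at a terminal vertex, because otherwise the walk could be extended to length $h+1$. So $\Omega^h(p\Lambda)$ is projective. On the other hand, some walk of length $h-1$ ends at a nonterminal vertex, so $\Omega^{h-1}(p\Lambda)$ has a nonprojective summand by Krull--Schmidt. If $\operatorname{ht}(p)=\infty$, then walks of every length exist and keep extending, so no syzygy of $p\Lambda$ is projective.

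Then I would apply the first assertion of Proposition~\ref{prop:relationchainsyzygy}: $\Omega S_v=\bigoplus_{s(\alpha)=v}\alpha\Lambda$. For nonprojective $S_v$ this gives
\[
\pd S_v = 1+\max_{s(\alpha)=v}\pd(\alpha\Lambda) = 1+\max_{s(\alpha)=v}\operatorname{ht}(\alpha).
\]
The only edge case is a vertex $v$ with arrows leaving it, since then $S_v$ is nonprojective. Vertices with no outgoing arrows give projective simples and contribute $0$. Combining this with Lemma~\ref{lem:simplegldim} yields the height formula. Every arrow is a vertex of $\mathcal{R}(\Lambda)$ by definition, so the supremum over $\alpha\in Q_1$ is legitimate.

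To translate into chains, I would note that directed walks from $\alpha$ of length $\ell$ are exactly minimal relation chains $(\alpha,w_2,\dots,w_{\ell+1})$ of length $\ell+1$. Hence $1+\operatorname{ht}(\alpha)$ is the supremum of $|W|$ over chains starting at $\alpha$. Infinite height corresponds, by König's lemma using the finiteness given by Lemma~\ref{lem:atmosttwo}, to an infinite chain.

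For the cycle criterion, $\mathcal{R}(\Lambda)$ is finite by Remark~\ref{rem:relationchainfinite}. The pigeonhole argument of Theorem~\ref{thm:universalheight}(4) then shows that infinite height from $\alpha$ is equivalent to a reachable directed cycle.

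The main obstacle I expect is the $h=\infty$ direction. It needs arbitrarily long walks to give nonprojective syzygies, which is fine, and it needs the direct-sum identification to be an isomorphism, not merely an equality of sets. This is handled by Lemma~\ref{lem:kernel}.
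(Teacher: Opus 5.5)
Your proposal is correct and follows the paper's route: reduce to simple modules via Lemma~\ref{lem:simplegldim}, iterate Proposition~\ref{prop:relationchainsyzygy} to match syzygy summands with walks (equivalently, minimal relation chains) in $\mathcal{R}(\Lambda)$, and use finiteness of $\mathcal{R}(\Lambda)$ with pigeonhole for the cycle criterion. The only difference is organisational: you first isolate the identity $\pd_\Lambda(p\Lambda)=\operatorname{ht}_{\mathcal{R}(\Lambda)}(p)$, which the paper proves separately as Lemma~\ref{lem:pathidealsyzygyheight} for the finitistic bound, whereas the paper's proof of this theorem argues directly with the layers of the resolution of $S_v$.
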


The statement is due to \cite{ZZ26}; the proof we give below is
different, and proceeds by unfolding the relation-chain quiver.

\begin{proof}
By Lemma~\ref{lem:simplegldim} it suffices to compute the projective
dimension of each simple right $\Lambda$-module.

Fix $v \in Q_0$. By Proposition~\ref{prop:relationchainsyzygy}, the
minimal projective resolution of $S_v$ has successive nonprojective
layers
\[
\Omega_\Lambda^1 S_v = \bigoplus_{s(\alpha) = v} \alpha\Lambda,
\qquad
\Omega_\Lambda^2 S_v = \bigoplus_{s(\alpha) = v}
\bigoplus_{u \in \mathcal{M}(\alpha)} u\Lambda,
\qquad \dots
\]
Each summand corresponds to a minimal relation chain beginning with
an arrow leaving $v$: a summand of the form $u\Lambda$ appearing at
level $n$ corresponds to a chain $(w_1, \ldots, w_n)$ where $w_1$ is
an arrow leaving $v$ and $w_{j+1} \in \mathcal{M}(w_j)$ for each $j$.
The chain has length $n$; the corresponding summand survives through
$n$ layers of the resolution. By the third assertion of
Proposition~\ref{prop:relationchainsyzygy}, the resolution terminates
at the first level at which all summands are projective, that is, at
the first level at which all chains reach a terminal vertex.

Therefore
\[
\pd_\Lambda S_v = \sup\{|W| : W \text{ a minimal relation chain
beginning with an arrow leaving } v\},
\]
with the convention that the supremum over the empty set equals $0$
if $v$ is not the source of any arrow, in which case $S_v = P_v$ is
projective.

Taking the supremum over all $v \in Q_0$ gives the first displayed
formula.

For the second formula, note that each arrow $\alpha$ of $Q$ is a
vertex of $\mathcal{R}(\Lambda)$, and the minimal relation chains
beginning with $\alpha$ are exactly the directed paths in
$\mathcal{R}(\Lambda)$ beginning at $\alpha$. A chain of length $m$
corresponds to a directed path of length $m - 1$ beginning at
$\alpha$, so $m = 1 + (\text{length of the path})$. Taking the
supremum over all $\alpha$ gives the second displayed formula.

Finally, if $\mathcal{R}(\Lambda)$ contains a directed cycle
reachable from an arrow $\alpha$, then traversing the cycle
repeatedly produces directed paths of arbitrary length starting at
$\alpha$, hence minimal relation chains of arbitrary length, hence
$\pd_\Lambda S_{s(\alpha)} = \infty$ and $\gldim\Lambda = \infty$.
Conversely, if $\gldim\Lambda = \infty$, then some simple module has
infinite projective dimension, so there are minimal relation chains
of arbitrary length beginning with an arrow. Since
$\mathcal{R}(\Lambda)$ is finite, this forces a directed cycle
reachable from an arrow by the pigeonhole principle.
\end{proof}

\begin{remark}\label{rem:zhengzhangcomparison}
Theorem~\ref{thm:zhengzhang} is the specialisation of
Theorem~\ref{thm:gldimspecialbis} to the monomial case. In the
string-syzygy quiver, states are string modules; in the
relation-chain quiver, states are individual paths. The two quivers
are related by the compression map of
Section~\ref{subsec:compression}. This compression is available
precisely because, in the monomial case, the kernel of a principal
path map is a direct sum of principal path ideals.
\end{remark}

\subsection{Finitistic dimension for string algebras}
\label{subsec:findimstring}

The relation-chain quiver also yields a combinatorial lower bound
for the finitistic dimension of a string algebra, which becomes an
exact equality whenever the global dimension is finite.

\begin{lemma}\label{lem:pathidealsyzygyheight}
Let $\Lambda = \mathbf{k}Q/I$ be a finite dimensional string algebra
and let $p \in \operatorname{Pa}_{>0}(\Lambda)$ be a vertex of
$\mathcal{R}(\Lambda)$. Then
\[
\pd_\Lambda(p\Lambda) = \operatorname{ht}_{\mathcal{R}(\Lambda)}(p),
\qquad
\pd_\Lambda(P_{s(p)}/p\Lambda) = 1 +
\operatorname{ht}_{\mathcal{R}(\Lambda)}(p).
\]
In particular, $P_{s(p)}/p\Lambda$ has finite projective dimension if
and only if $\operatorname{ht}_{\mathcal{R}(\Lambda)}(p) < \infty$.
\end{lemma}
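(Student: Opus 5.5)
The plan is to prove the first identity $\pd_\Lambda(p\Lambda)=\operatorname{ht}_{\mathcal{R}(\Lambda)}(p)$ by exploiting the recursion supplied by Proposition~\ref{prop:relationchainsyzygy}, and then to deduce the second identity from a one-step projective cover.

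\emph{Recursion on $\mathcal{R}(\Lambda)$.} The recursion has two ingredients. First, by the second assertion of Proposition~\ref{prop:relationchainsyzygy}, $\Omega_\Lambda(p\Lambda)\cong\bigoplus_{u\in\mathcal{M}(p)}u\Lambda$. Each such $u$ is again a vertex of $\mathcal{R}(\Lambda)$, because it is reached from $p$ by taking a minimal right annihilator. Second, by the third assertion, $p\Lambda$ is projective exactly when $p$ is terminal, that is, when $\operatorname{ht}(p)=0$. Since the projective dimension of a finite direct sum is the maximum over the summands, we obtain, for nonterminal $p$,
\[
\pd_\Lambda(p\Lambda)=1+\max_{u\in\mathcal{M}(p)}\pd_\Lambda(u\Lambda).
\]
Height satisfies the same recursion, namely $\operatorname{ht}(p)=1+\max_{u\in\mathcal{M}(p)}\operatorname{ht}(u)$ for nonterminal $p$, with $\mathcal{M}(p)$ finite by Lemma~\ref{lem:atmosttwo}.

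\emph{Finite height.} When $\operatorname{ht}(p)<\infty$, I induct on $\operatorname{ht}(p)$. The base case $\operatorname{ht}(p)=0$ is the terminal case just described. For the inductive step, every $u\in\mathcal{M}(p)$ has strictly smaller height, so the two recursions together give $\pd_\Lambda(p\Lambda)=\operatorname{ht}(p)$.

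\emph{Infinite height.} When $\operatorname{ht}(p)=\infty$, I argue directly rather than by induction. Since $\mathcal{R}(\Lambda)$ is finite (Remark~\ref{rem:relationchainfinite}), for every $n$ there is a directed path $p=w_0\to w_1\to\cdots\to w_n$. Iterating the syzygy formula together with Lemma~\ref{lem:additivity}, I show by induction on $j$ that $w_j\Lambda$ is a direct summand of $\Omega^j_\Lambda(p\Lambda)$. Moreover, $w_n\Lambda$ is nonprojective whenever $w_n$ is nonterminal, which we may arrange by extending the path by one further step. Hence $\Omega^n_\Lambda(p\Lambda)$ is never projective, and $\pd_\Lambda(p\Lambda)=\infty$. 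This is essentially the argument of Lemma~\ref{lem:pathsyzygy}, transplanted to path states. I expect this bookkeeping, that is, checking that the summand identification persists through the iterated syzygies, to be the only point needing care.

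\emph{The quotient.} For the second identity, I use the short exact sequence $0\to p\Lambda\to P_{s(p)}\to P_{s(p)}/p\Lambda\to 0$. Since $p$ has positive length, $p\Lambda\subseteq\rad P_{s(p)}$, so the surjection $P_{s(p)}\to P_{s(p)}/p\Lambda$ is a projective cover. It follows that $\Omega_\Lambda(P_{s(p)}/p\Lambda)\cong p\Lambda$, which is nonzero because $p\neq0$ in $\Lambda$. The quotient $P_{s(p)}/p\Lambda$ is nonprojective: otherwise the sequence would split, giving a nonzero direct summand $p\Lambda$ of $P_{s(p)}$ contained in its radical, which is impossible. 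Therefore $\pd_\Lambda(P_{s(p)}/p\Lambda)=1+\pd_\Lambda(p\Lambda)=1+\operatorname{ht}(p)$, and the finiteness statement is immediate.
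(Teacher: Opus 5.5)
Your proof is correct and is essentially the paper's argument. Both unfold the syzygy formula of Proposition~\ref{prop:relationchainsyzygy} along directed paths of $\mathcal{R}(\Lambda)$, and both then use the projective cover $P_{s(p)}\to P_{s(p)}/p\Lambda$ with kernel $p\Lambda$. The paper writes $\Omega^k_\Lambda(p\Lambda)$ as a sum of $u_k\Lambda$ over all length-$k$ paths $p=u_0\to\cdots\to u_k$ and reads off projectivity uniformly, whereas you use an induction on height plus a single-path witness for infinite height.

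Your explicit check that $P_{s(p)}/p\Lambda$ is nonprojective fills a step the paper leaves implicit. In the infinite case you do not need finiteness of $\mathcal{R}(\Lambda)$, since arbitrarily long paths from $p$ are exactly what $\operatorname{ht}_{\mathcal{R}(\Lambda)}(p)=\infty$ means.
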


\begin{proof}
By Proposition~\ref{prop:relationchainsyzygy},
$\Omega_\Lambda(p\Lambda) \cong \bigoplus_{u \in \mathcal{M}(p)}
u\Lambda$, and a summand $u\Lambda$ is projective if and only if
$u$ is a terminal vertex of $\mathcal{R}(\Lambda)$ (that is,
$\mathcal{M}(u) = \emptyset$). By induction on $k \ge 0$, the $k$-th
syzygy decomposes over all directed paths of length $k$ in
$\mathcal{R}(\Lambda)$ starting at $p$:
\[
\Omega_\Lambda^k(p\Lambda) \cong
\bigoplus_{p = u_0 \to u_1 \to \cdots \to u_k} u_k\Lambda.
\]
Consequently, $\Omega_\Lambda^k(p\Lambda)$ is projective if and only
if every directed path of length $k$ starting at $p$ ends at a
terminal vertex, which holds if and only if there is no directed
path of length $k + 1$ in $\mathcal{R}(\Lambda)$ starting at $p$
(equivalently, $\operatorname{ht}_{\mathcal{R}(\Lambda)}(p) \le k$).
The smallest nonnegative integer $k$ with this property is
$\operatorname{ht}_{\mathcal{R}(\Lambda)}(p)$, and thus
$\pd_\Lambda(p\Lambda) = \operatorname{ht}_{\mathcal{R}(\Lambda)}(p)$.

Since $p$ has positive length, $p\Lambda \subseteq \rad P_{s(p)}$.
The canonical projection $P_{s(p)} \to P_{s(p)}/p\Lambda$ is
therefore a projective cover with kernel
$\Omega_\Lambda(P_{s(p)}/p\Lambda) = p\Lambda$. Consequently,
\[
\pd_\Lambda(P_{s(p)}/p\Lambda) = 1 + \pd_\Lambda(p\Lambda)
= 1 + \operatorname{ht}_{\mathcal{R}(\Lambda)}(p). \qedhere
\]
\end{proof}

\begin{theorem}\label{thm:findimstring}
Let $\Lambda = \mathbf{k}Q/I$ be a finite dimensional string
algebra. Then
\[
\findim\Lambda \ge \sup\bigl(\{0\} \cup
\{1 + \operatorname{ht}_{\mathcal{R}(\Lambda)}(p) :
p \in \mathcal{R}(\Lambda) \text{ with }
\operatorname{ht}_{\mathcal{R}(\Lambda)}(p) < \infty\}\bigr).
\]
If $\gldim\Lambda < \infty$, then equality holds.
\end{theorem}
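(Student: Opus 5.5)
The lower bound comes directly from Lemma~\ref{lem:pathidealsyzygyheight}, and the equality from Theorem~\ref{thm:zhengzhang}.

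\emph{Lower bound.} Let $p$ be a vertex of $\mathcal{R}(\Lambda)$ with $\operatorname{ht}_{\mathcal{R}(\Lambda)}(p) < \infty$. Lemma~\ref{lem:pathidealsyzygyheight} gives a witness module
\[
M_p := P_{s(p)}/p\Lambda, \qquad \pd_\Lambda M_p = 1 + \operatorname{ht}_{\mathcal{R}(\Lambda)}(p) < \infty.
\]
Since $M_p$ is finitely generated of finite projective dimension, the definition of $\findim\Lambda$ in Section~\ref{subsec:gldimdef} gives $\findim\Lambda \ge 1 + \operatorname{ht}_{\mathcal{R}(\Lambda)}(p)$. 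The value $0$ is trivially a lower bound. Taking the supremum over all such $p$ yields the stated inequality. No appeal to Theorem~\ref{thm:findimheight} is needed.

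\emph{Equality when $\gldim\Lambda<\infty$.} Here $\findim\Lambda = \gldim\Lambda$ by Remark~\ref{rem:gldimvsfindimbasic}, so it suffices to show that the right-hand side is at least $\gldim\Lambda$ and at most $\gldim\Lambda$.
\begin{enumerate}[label=\textup{(\roman*)}]
\item For the bound ``at least'': every arrow $\alpha \in Q_1$ is a vertex of $\mathcal{R}(\Lambda)$. By Theorem~\ref{thm:zhengzhang},
\[
\gldim\Lambda = \sup\bigl(\{0\} \cup \{1+\operatorname{ht}_{\mathcal{R}(\Lambda)}(\alpha) : \alpha \in Q_1\}\bigr),
\]
and finiteness of $\gldim\Lambda$ forces $\operatorname{ht}_{\mathcal{R}(\Lambda)}(\alpha)<\infty$ for every arrow. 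Hence every arrow occurs in the index set of the supremum, and the right-hand side is at least $\gldim\Lambda$.
\item For the bound ``at most'': each admissible $p$ contributes $1+\operatorname{ht}_{\mathcal{R}(\Lambda)}(p) = \pd_\Lambda M_p \le \gldim\Lambda$.
\end{enumerate}
Combining (i) and (ii) gives equality.

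\emph{Main obstacle.} The step needing care is the ``at least'' direction in (i): we must check that the supremum in Theorem~\ref{thm:zhengzhang} runs only over arrows, which are genuine vertices of $\mathcal{R}(\Lambda)$ by Definition~\ref{def:relationchainquiver}, and that these arrows have finite height. The first point holds because arrows are the roots from which $\mathcal{R}(\Lambda)$ is built. The second follows because finite global dimension forces every height in the formula to be finite. Alternatively, one could argue directly that for an arrow $\alpha$ we have $M_\alpha = S_{s(\alpha)}$-type quotients with $\Omega S_v = \bigoplus \alpha\Lambda$, but the route through Theorem~\ref{thm:zhengzhang} is cleaner.
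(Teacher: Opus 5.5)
Your proposal is correct and is essentially the paper's proof. The witnesses $M_p = P_{s(p)}/p\Lambda$ from Lemma~\ref{lem:pathidealsyzygyheight} give the lower bound. When $\gldim\Lambda<\infty$, the arrows (vertices of $\mathcal{R}(\Lambda)$ of finite height) recover $\gldim\Lambda$ through Theorem~\ref{thm:zhengzhang}, while $\pd_\Lambda M_p \le \gldim\Lambda$ bounds the right-hand side from above. One small blemish is your unused closing aside. $P_{s(\alpha)}/\alpha\Lambda$ equals the simple $S_{s(\alpha)}$ only when $\alpha$ is the unique arrow leaving $s(\alpha)$, so drop that remark rather than rely on it.
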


\begin{proof}
For every vertex $p \in \mathcal{R}(\Lambda)$ with
$\operatorname{ht}_{\mathcal{R}(\Lambda)}(p) < \infty$,
Lemma~\ref{lem:pathidealsyzygyheight} shows that the module
$M_p := P_{s(p)}/p\Lambda$ has finite projective dimension equal to
$1 + \operatorname{ht}_{\mathcal{R}(\Lambda)}(p)$. Taking the
supremum over all such $p$ yields the lower bound for
$\findim\Lambda$.

If $\gldim\Lambda < \infty$, then every vertex of
$\mathcal{R}(\Lambda)$ has finite height, and
$\findim\Lambda = \gldim\Lambda$. Since every arrow $\alpha \in Q_1$
is a vertex of $\mathcal{R}(\Lambda)$, the right-hand side is at
least $\sup\bigl(\{0\} \cup
\{1 + \operatorname{ht}_{\mathcal{R}(\Lambda)}(\alpha) :
\alpha \in Q_1\}\bigr) = \gldim\Lambda$ by
Theorem~\ref{thm:zhengzhang}, while no module can have projective
dimension exceeding $\gldim\Lambda$. Thus equality holds.
\end{proof}

\begin{remark}\label{rem:findimstringnaive}
The formula in Theorem~\ref{thm:findimstring} is stated in terms of
the height condition $\operatorname{ht}_{\mathcal{R}(\Lambda)}(p) <
\infty$ rather than the length of an individual finite minimal
relation chain starting at $p$. This distinction is essential for
two reasons.

First, a finite chain $(w_1, \ldots, w_m)$ that is not maximal can
be extended further. The algebra
$B = \mathbf{k}Q/(\alpha\beta, \beta\alpha)$ of
Theorem~\ref{thm:twostring} illustrates this: $\mathcal{R}(B)$ is a
directed $2$-cycle, so no vertex of $\mathcal{R}(B)$ has finite
height. By the Butler--Ringel classification
(Theorem~\ref{thm:butlerringel}) and the fact that $B$ has no bands,
the indecomposable right $B$-modules are $S_1$, $S_2$, $P_1$, $P_2$.
Since $\pd_B S_1 = \pd_B S_2 = \infty$ by
Theorem~\ref{thm:twostring} and $\pd_B P_1 = \pd_B P_2 = 0$, every
nonprojective $B$-module has infinite projective dimension, and
hence $\findim B = 0$.

Second, by Lemma~\ref{lem:atmosttwo}, a path $p$ may have two
minimal right annihilators $q$ and $r$; if $q$ is terminal while $r$
lies on a directed cycle, then $(p, q)$ is a maximal chain ending at
a terminal vertex, yet $\Omega_\Lambda(p\Lambda) \cong q\Lambda
\oplus r\Lambda$ has infinite projective dimension because
$\operatorname{ht}_{\mathcal{R}(\Lambda)}(r) = \infty$. Requiring
$\operatorname{ht}_{\mathcal{R}(\Lambda)}(p) < \infty$ ensures that
every branch of the syzygy resolution of $p\Lambda$ terminates.
\end{remark}

\subsection{The compression phenomenon}\label{subsec:compression}

We close this section with a description of the compression that
restricts the string-syzygy quiver of
Section~\ref{sec:stringsyzygy} to the relation-chain quiver of this
section, and of the algebraic limits of this compression.

\begin{remark}\label{rem:compressionmap}
Let $\Lambda$ be a string algebra. For a nonzero path $p$, the principal
right ideal $p\Lambda$ is a string module with a one-dimensional top
generated by $p$. The nonprojective principal path ideals that occur as
direct summands of iterated syzygies of simple modules are in bijection
with the nonterminal vertices of the relation-chain quiver
$\mathcal{R}(\Lambda)$. Under this bijection, the transition
\[
\Omega_\Lambda(p\Lambda) \cong \bigoplus_{u \in \mathcal{M}(p)} u\Lambda
\]
of Proposition~\ref{prop:relationchainsyzygy} corresponds to the arrows
$p \to u$ of $\mathcal{R}(\Lambda)$. A terminal vertex $u$ corresponds
to a projective summand $u\Lambda$, which is discarded in the
string-syzygy quiver but retained in $\mathcal{R}(\Lambda)$ to record
the termination of the resolution. The relation-chain quiver is
therefore obtained from the string-syzygy quiver by restricting to the
principal path ideals that occur in the resolution, replacing each such
ideal $p\Lambda$ by the path $p$, and adjoining the terminal projective
principal path ideals as sinks. This combinatorial model computes the
global dimension by Theorem~\ref{thm:zhengzhang}. It is not a direct
quotient of the full string-syzygy quiver, since not every string module
is a principal path ideal (see Remark~\ref{rem:compressionnotall}).
\end{remark}

\begin{remark}\label{rem:compressionnotall}
Not every string module over a string algebra is a principal path
ideal. Consider the string algebra with quiver
\[
1 \xrightarrow{a} 2 \xrightarrow{b} 3, \qquad 1 \xrightarrow{c} 3,
\]
and relation $ab = 0$. The string module $M(a)$ has basis
$\{z_1, z_2\}$ with $z_1 \cdot a = z_2$, so it has dimension $2$.
The principal path ideals of $\Lambda$ are
$P_1 = \langle e_1, a, c\rangle$ of dimension $3$,
$P_2 = \langle e_2, b\rangle$ of dimension $2$,
$P_3 = \langle e_3\rangle$ of dimension $1$,
$a\Lambda = \langle a\rangle$ of dimension $1$, and
$b\Lambda, c\Lambda$ of dimension $1$. None of these equals $M(a)$.
Nevertheless, the projective dimension of $M(a)$ is determined by
the relation-chain quiver. The projective cover of $M(a)$ is $P_1$,
and its kernel is $c\Lambda$. Since $c$ is terminal (its target is
a sink), the chain is $(c)$ of length $1$ and $\pd M(a) = 1$. In this example, the relation-chain quiver computes the projective dimension of the string module $M(a)$ via its first syzygy, even 
though $M(a)$ itself is not a principal path ideal.
\end{remark}

\begin{remark}\label{rem:compressionalgorithm}
The relation-chain quiver can be computed from the bound quiver
presentation in finitely many steps. Starting from the arrows of
$Q$, one computes the minimal right annihilators of each path by
inspecting the relations in $I$, and one builds the quiver
$\mathcal{R}(\Lambda)$ by adding an arrow $p \to u$ for each such
annihilator. The global dimension and the finitistic dimension are
then obtained by the formulas of
Theorems~\ref{thm:zhengzhang} and~\ref{thm:findimstring}. We refer
to Section~\ref{sec:examples} for concrete computations.
\end{remark}

\begin{remark}\label{rem:compressionfails}
The compression of Remark~\ref{rem:compressionmap} is available in
the monomial case. If the ideal of relations is not generated by
paths, then the kernel of a principal path map may be generated by
a linear combination of paths rather than by a single path, and the
path state alone no longer determines the syzygy. We illustrate
this in Section~\ref{sec:binomial}.
\end{remark}

\section{Binomial relations and string states}\label{sec:binomial}

In Section~\ref{sec:stringalgebra} we showed that for a string
algebra the reachable syzygy quiver compresses to the relation-chain
quiver, whose vertices are nonzero paths. The compression uses the
monomiality of the ideal of relations: when all relations are paths,
the kernel of the map $P_{t(p)} \to P_{s(p)}$ sending $x$ to $px$ is
generated by minimal right annihilators of $p$, and each such
annihilator is again a path. In this section we show that the
compression fails once binomial relations are present, and we
identify the exact point at which it fails.

In Section~\ref{subsec:commutativediamond} we introduce the
commutativity diamond, the smallest special biserial algebra with a
binomial relation, and compute its global dimension. In
Section~\ref{subsec:naiveextension} we show that a naive extension of
the path-chain rule to the commutativity diamond gives an incorrect
answer. In Section~\ref{subsec:stringquotient} we introduce the
associated string quotient and show that it does not preserve the
global dimension in general. In
Section~\ref{subsec:monomialreductionfails} we prove a negative
result: no admissible monomial ideal on the same quiver that contains
every monomial occurring in a relation of the original algebra can
achieve pointwise matching of projective dimensions.

Throughout this section $\Lambda = \mathbf{k}Q/I$ is a special
biserial algebra. We follow the conventions of
Sections~\ref{sec:background} and~\ref{sec:stringsyzygy}.

\subsection{The commutativity diamond}\label{subsec:commutativediamond}

We begin with the standard example that exhibits the difference
between monomial and binomial relations.

\begin{example}[Commutativity diamond]\label{ex:commutativediamond}
Let $Q$ be the quiver with vertices $1, 2, 3, 4$ and arrows
\[
a : 1 \to 2, \quad b : 1 \to 3, \quad c : 2 \to 4, \quad
d : 3 \to 4,
\]
as in Figure~\ref{fig:diamond}, and let
\[
A := \mathbf{k}Q/(ac - bd).
\]
Then $A$ is a finite dimensional special biserial algebra that is
not a string algebra, and $\gldim A = 2$.
\end{example}

\vspace{-1.5em}
\begin{figure}[h]
\centering
\begin{tikzpicture}[scale=0.9,>=Stealth]
\node (1) at (0,0) {$1$};
\node (2) at (-1.2,-1.2) {$2$};
\node (3) at (1.2,-1.2) {$3$};
\node (4) at (0,-2.4) {$4$};
\draw[->] (1) -- node[above left] {$a$} (2);
\draw[->] (1) -- node[above right] {$b$} (3);
\draw[->] (2) -- node[below left] {$c$} (4);
\draw[->] (3) -- node[below right] {$d$} (4);
\end{tikzpicture}
\caption{The commutativity diamond.}
\label{fig:diamond}
\end{figure}

\vspace{-0.8em}
\begin{proof}
We verify that $A$ is special biserial. For \textup{(SB1)}, the
out-degrees of $Q$ are $(2,1,1,0)$ and the in-degrees are
$(0,1,1,2)$, so both are at most two at every vertex. For
\textup{(SB2)}, vertex $2$ has the unique incoming arrow $a$ and the
unique outgoing arrow $c$, vertex $3$ has the unique incoming arrow
$b$ and the unique outgoing arrow $d$, and vertices $1$ and $4$ are
respectively a source and a sink. Hence for every arrow the possible
continuations, and the possible predecessors, form a set of at most
one element, so \textup{(SB2)} holds regardless of $I$. Therefore $A$
is special biserial.

The ideal $I = (ac - bd)$ is generated by $ac - bd$. We claim that
neither $ac$ nor $bd$ lies in $I$. The ideal $I$ consists of the
elements $u(ac - bd)v$ where $u, v$ are paths in $Q$. The only path
of $Q$ ending at $1$ is $e_1$, and the only path of $Q$ starting at
$4$ is $e_4$. Since $ac - bd$ goes from $1$ to $4$, we have
$u = e_1$ and $v = e_4$, so $I = \mathbf{k}\cdot(ac - bd)$. If
$ac \in I$, then $ac = \lambda(ac - bd)$ for some
$\lambda \in \mathbf{k}$, forcing $\lambda = 1$ and $\lambda = 0$,
a contradiction. Hence $ac \notin I$, and similarly $bd \notin I$.
Therefore $A$ is not a string algebra.

We compute the global dimension. Write $z := ac = bd$ in $A$. The
$\mathbf{k}$-basis of $A$ is the set of residue classes of the paths
of $Q$ not in $I$, namely
\[
e_1, \quad e_2, \quad e_3, \quad e_4, \quad a, \quad b, \quad c,
\quad d, \quad z.
\]
Indeed, $Q$ has no paths of length $\ge 3$, the paths of length two
are $ac$ and $bd$, and these are identified to a single element $z$.

The indecomposable projective right $A$-modules have the
$\mathbf{k}$-bases
\[
P_1 = \langle e_1, a, b, z\rangle, \quad
P_2 = \langle e_2, c\rangle, \quad
P_3 = \langle e_3, d\rangle, \quad
P_4 = \langle e_4\rangle.
\]
The right $A$-module structure is determined by
$a\cdot e_2 = a$, $a\cdot c = z$, $b\cdot e_3 = b$, $b\cdot d = z$,
$c\cdot e_4 = c$, $d\cdot e_4 = d$, $z\cdot e_4 = z$, and all other
concatenations vanish.

The projective cover of $S_1$ is the natural surjection
$\varepsilon_0 : P_1 \to S_1$ with kernel
$\rad P_1 = \langle a, b, z\rangle$. Consider the right $A$-module
homomorphism
\[
\psi : P_2 \oplus P_3 \longrightarrow \rad P_1,
\qquad \psi(x, y) = ax + by.
\]
The map $\psi$ is surjective: $\psi(e_2, 0) = a$, $\psi(0, e_3) = b$,
and $\psi(c, 0) = ac = z$.

We compute the kernel. Let $x = \lambda e_2 + \mu c \in P_2$ and
$y = \nu e_3 + \rho d \in P_3$ with $\lambda, \mu, \nu, \rho \in
\mathbf{k}$. Then
\[
\psi(x, y) = \lambda a + \mu ac + \nu b + \rho bd
= \lambda a + \nu b + (\mu + \rho) z.
\]
This is zero in $\rad P_1$ if and only if $\lambda = \nu = 0$ and
$\mu + \rho = 0$. Hence
\[
\ker \psi
= \{(\mu c, -\mu d) : \mu \in \mathbf{k}\}
= \mathbf{k}\cdot(c, -d).
\]
As a right $A$-module, $(c, -d)\cdot e_4 = (c, -d)$ and
$(c, -d)\cdot \alpha = 0$ for every other arrow $\alpha$. Hence
$\ker \psi \cong S_4$. Since $S_4$ lies in
$\rad(P_2 \oplus P_3) = \langle c\rangle \oplus \langle d\rangle$, the
map $\psi$ is a minimal projective cover of $\rad P_1$. Therefore
\[
\Omega_A^2 S_1 = \ker \psi \cong S_4.
\]
The module $S_4$ is projective, since $P_4 = \langle e_4\rangle$ and
$\rad P_4 = 0$. Hence $\Omega_A^3 S_1 = 0$ and $\pd_A S_1 = 2$.

The remaining simple modules are treated similarly. The projective
cover of $S_2$ is $P_2 \to S_2$ with kernel
$\langle c\rangle = cA$. Since $c\cdot e_4 = c$ and
$c\cdot \alpha = 0$ for $\alpha \neq e_4$, we have
$cA \cong S_4 = P_4$, which is projective. Hence $\pd_A S_2 = 1$.
The same argument gives $\pd_A S_3 = 1$. Finally $\pd_A S_4 = 0$
since $S_4 = P_4$. By Lemma~\ref{lem:simplegldim},
\[
\gldim A = \sup\{2, 1, 1, 0\} = 2.
\qedhere
\]
\end{proof}

\begin{remark}\label{rem:stringmodulediamond}
The first syzygy $\rad P_1 = \langle a, b, z\rangle$ of
Example~\ref{ex:commutativediamond} is a string module. Consider the
quiver $W$ with vertices $u, v, w$ and arrows $u \xrightarrow{c} v$
and $w \xrightarrow{d} v$, together with the morphism
$\ell : W \to Q$ sending $u \mapsto 2$, $v \mapsto 4$, $w \mapsto 3$,
and the arrows to $c$ and $d$. The underlying unoriented graph of $W$
is the path $2 - 4 - 3$, so $W$ is a string diagram. The associated
module $M(W)$ has basis $\{z_u, z_v, z_w\}$ with actions
$z_u\cdot c = z_v$ and $z_w\cdot d = z_v$, and all other actions
zero. The linear map $M(W) \to \rad P_1$ sending $z_u \mapsto a$,
$z_v \mapsto z$, $z_w \mapsto b$ is an isomorphism of right
$A$-modules. The relation $ac = bd$ acts as zero on $M(W)$ because
no basis element is mapped to vertex $1$. Hence $\rad P_1 \cong M(W)$
is a string module.
\end{remark}

\subsection{Failure of a naive path-chain extension}
\label{subsec:naiveextension}

The relation-chain formula of Theorem~\ref{thm:zhengzhang} relies on
the monomiality of the ideal of relations. A natural question is
whether the same formula holds, with the obvious modifications, when
binomial relations are allowed. We show in this subsection that it
does not.

\begin{proposition}\label{prop:naivepathchain}
In Example~\ref{ex:commutativediamond} neither $ac$ nor $bd$ is zero
in $A$. Consequently, an algorithm that computes minimal right
annihilators of $a$ and of $b$ in $A$ finds none, and would conclude,
if it applied the path-chain rule verbatim, that the principal right
ideals $aA$ and $bA$ are projective and that $\rad P_1 = aA \oplus bA$
is projective. The actual projective dimension of $S_1$ is $2$.
\end{proposition}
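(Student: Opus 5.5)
We split the statement into three pieces and verify each directly from the computations already carried out in Example~\ref{ex:commutativediamond}.

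\emph{Step 1: nonvanishing of $ac$ and $bd$.} In the proof of Example~\ref{ex:commutativediamond} we showed that $I = \mathbf{k}\cdot(ac - bd)$, because the only path ending at $1$ is $e_1$ and the only path starting at $4$ is $e_4$. Hence $ac \in I$ would force $ac = \lambda(ac - bd)$ in $\mathbf{k}Q$, which is impossible by comparing coefficients of the basis paths $ac$ and $bd$. The same applies to $bd$. Thus both equal the nonzero element $z$ of $A$.

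\emph{Step 2: no minimal right annihilators.} By Definition~\ref{def:minimalannihilator}, a minimal right annihilator of $a$ is a nontrivial surviving path $u$ with $s(u) = t(a) = 2$ and $au = 0$. The only nontrivial path starting at $2$ is $c$, and $ac = z \neq 0$. So $\mathcal{M}(a) = \emptyset$, and symmetrically $\mathcal{M}(b) = \emptyset$, since $d$ is the only nontrivial path from $3$.

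\emph{Step 3: what the naive rule would conclude.} The verbatim path-chain rule consists of the third assertion of Proposition~\ref{prop:relationchainsyzygy} together with the first assertion $\rad P_v = \bigoplus_\alpha \alpha\Lambda$. Applied blindly, this rule declares $aA$ and $bA$ projective, because they are terminal, and declares $\rad P_1 = aA \oplus bA$ projective. That would give $\pd S_1 = 1$. By contrast, Example~\ref{ex:commutativediamond} computes $\Omega_A^2 S_1 \cong S_4 \neq 0$, so $\pd_A S_1 = 2$.

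\emph{Step 4: locating the error.} The discrepancy comes from two sources. First, $aA = \langle a, z\rangle$ and $bA = \langle b, z\rangle$ intersect in $\langle z\rangle$, so the sum $aA + bA$ is not direct. Second, $\rad P_1$, which is the three-dimensional string module $M(W)$ of Remark~\ref{rem:stringmodulediamond}, is not projective: its projective cover $P_2 \oplus P_3$ is four-dimensional. We cite the kernel computation $\ker\psi = \mathbf{k}(c,-d)$ as the witness.

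The only delicate point is phrasing the hypothetical conclusion of the algorithm precisely. This is handled by citing exactly which lemmas, Lemma~\ref{lem:kernel} and Proposition~\ref{prop:relationchainsyzygy}, rely on monomiality. In particular, the directness of the sum fails at the shared element $z$.
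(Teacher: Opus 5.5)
Your proposal is correct and follows the paper's proof essentially step for step. Both arguments proceed in the same order: the nonvanishing of $ac$ and $bd$ from $I = \mathbf{k}\cdot(ac-bd)$, the emptiness of $\mathcal{M}(a)$ and $\mathcal{M}(b)$, the naive conclusion via the third assertion of Proposition~\ref{prop:relationchainsyzygy}, the failure of directness at $aA \cap bA = \langle z\rangle$, and $\pd_A S_1 = 2$ from Example~\ref{ex:commutativediamond}. The only difference is cosmetic: the paper also checks $aA \cong P_2$ and $bA \cong P_3$, showing the naive claim about the individual ideals is actually true and the error lies only in directness, whereas you show $\rad P_1$ is nonprojective by the dimension count $\dim(P_2 \oplus P_3) = 4 \neq 3 = \dim \rad P_1$.
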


\begin{proof}
By the proof of Example~\ref{ex:commutativediamond}, neither $ac$ nor
$bd$ lies in $I = (ac - bd)$, so both are nonzero in $A$. The only
path of $Q$ starting at $t(a) = 2$ is $c$, and $ac \neq 0$; hence
there is no minimal right annihilator of $a$ in $A$. Similarly the
only path of $Q$ starting at $t(b) = 3$ is $d$, and $bd \neq 0$;
hence there is no minimal right annihilator of $b$ in $A$.

An algorithm applying the path-chain rule of
Proposition~\ref{prop:relationchainsyzygy} verbatim would proceed as
follows. For $p = a$, no minimal right annihilator exists, so $aA$
is terminal and, by the third assertion of
Proposition~\ref{prop:relationchainsyzygy}, $aA$ is projective.
Similarly $bA$ is projective. The radical of $P_1$ is the sum
$aA + bA$; if this sum were direct, $\rad P_1$ would be the direct
sum of two projective modules and hence projective, giving
$\pd_A S_1 \le 1$.

We show that this conclusion is incorrect. The right $A$-module
$aA = \langle a, z\rangle$ is isomorphic to $P_2$: the map sending
$a \mapsto e_2$ and $z \mapsto c$ is a bijection of $\mathbf{k}$-bases
and preserves the right $A$-module structure. Similarly $bA \cong P_3$.
However, $aA \cap bA = \langle z\rangle \neq 0$, so
$\rad P_1 = aA + bA$ is not a direct sum. The sum of two projective
submodules of a projective module need not be projective, and here
it is not: by Example~\ref{ex:commutativediamond},
$\pd_A S_1 = 2$, so $\rad P_1$ is not projective.
\end{proof}

\begin{remark}\label{rem:linearcombination}
The kernel vector $(c, -d)$ of the projective cover
$\psi : P_2 \oplus P_3 \to \rad P_1$ is not detected by any rule that
inspects isolated paths. A change of basis of $P_2$ rescales $c$ by
a nonzero scalar, and a change of basis of $P_3$ rescales $d$ by a
nonzero scalar, so neither component can be eliminated. Since
$ac = bd$ in $A$, the element $c$ alone does not lie in the kernel
(as $ac = z \neq 0$), and neither does $d$ alone. The kernel is the
linear combination $(c, -d)$, and it is responsible for the second
syzygy of $S_1$.
\end{remark}

\subsection{The associated string quotient does not preserve homology}\label{subsec:stringquotient}

A natural attempt to reduce a special biserial algebra to a string
algebra on the same quiver is to replace the binomial relations by
the monomials appearing in them, and to work with the resulting
string algebra. We show that this reduction does not preserve the
global dimension in general.

\begin{definition}\label{def:stringquotient}
Let $A$ be a finite dimensional special biserial algebra. The
associated string quotient of $A$ is the algebra $A/J$, where $J$ is
the two-sided ideal generated by the paths that appear as terms in
the binomial relations of $A$. More precisely, if the binomial
relations of $A$ are of the form $p_r - \lambda_r q_r = 0$ with
$\lambda_r \in \mathbf{k}^\ast$ and $p_r, q_r$ paths, then $J$ is
the two-sided ideal generated by $\{p_r, q_r\}_r$.
\end{definition}

\begin{remark}\label{rem:stringquotientdiamond}
For the commutativity diamond of Example~\ref{ex:commutativediamond},
the associated string quotient is
\[
A/J = \mathbf{k}Q/(ac, bd).
\]
This is a string algebra, since the defining relations are
monomials. A direct computation, similar to that in
Example~\ref{ex:commutativediamond}, gives $\gldim A/J = 2$, which
coincides with $\gldim A$. The two homological kernels, however,
differ. Over $A$ the kernel of the minimal projective cover of
$\rad P_1$ is generated by the coupled vector $(c, -d)$ and is
isomorphic to $S_4$; over $A/J$ the kernel is generated by $(c, 0)$
and $(0, d)$ and is isomorphic to $S_4 \oplus S_4$. The resolutions
agree in length but differ in the syzygy modules.
\end{remark}

The coincidence of global dimensions in
Remark~\ref{rem:stringquotientdiamond} is not a general phenomenon,
as the following example shows.

\begin{example}\label{ex:parallelarrows}
Let $Q'$ be the quiver with vertices $1, 2, 3, 4, 5$ and arrows
\[
a : 1 \to 2, \quad b : 1 \to 3, \quad c : 2 \to 4, \quad
d : 3 \to 4, \quad x : 4 \to 5,
\]
as in Figure~\ref{fig:parallelarrows}, and let
\[
A' := \mathbf{k}Q'/(ac - bd, cx).
\]
Then $A'$ is special biserial with $\gldim A' = 2$, while the
associated string quotient
\[
A'/J = \mathbf{k}Q'/(ac, bd, cx)
\]
is a string algebra with $\gldim(A'/J) = 3$.
\end{example}

\vspace{-1.2em}
\begin{figure}[h]
\centering
\begin{tikzpicture}[scale=0.9,>=Stealth]
\node (1) at (0,0) {$1$};
\node (2) at (-1.2,-1.2) {$2$};
\node (3) at (1.2,-1.2) {$3$};
\node (4) at (0,-2.4) {$4$};
\node (5) at (0,-3.6) {$5$};
\draw[->] (1) -- node[above left] {$a$} (2);
\draw[->] (1) -- node[above right] {$b$} (3);
\draw[->] (2) -- node[below left] {$c$} (4);
\draw[->] (3) -- node[below right] {$d$} (4);
\draw[->] (4) -- node[right] {$x$} (5);
\end{tikzpicture}
\caption{The quiver of $A' = \mathbf{k}Q'/(ac-bd,\, cx)$.}
\label{fig:parallelarrows}
\end{figure}

\begin{proof}
Condition \textup{(SB1)} is immediate from the shape of $Q'$, whose
out-degrees $(2,1,1,1,0)$ and in-degrees $(0,1,1,2,1)$ are at most
two at every vertex. For \textup{(SB2)}, the only point that is not
immediate from the presentation is $dx \notin I$: since $I$ is
generated by the degree-$2$ elements $ac-bd$ and $cx$, its degree-$2$
component is $\mathbf{k}(ac-bd)\oplus\mathbf{k}(cx)$, which does not
contain the basis path $dx$. Given this, every arrow has at most one
continuation and at most one predecessor outside $I$: the arrow $a$
has the unique continuation $c$ and no predecessor; $b$ has the
unique continuation $d$ and no predecessor; $c$ has the unique
predecessor $a$ and no continuation, since $cx \in I$; $d$ has the
unique predecessor $b$ and the unique continuation $x$; and $x$ has
the unique predecessor $d$ and no continuation, since $cx \in I$
while $dx \notin I$. Hence $A'$ is special biserial. It is not a
string algebra because $ac-bd$ is a binomial relation.

\emph{The global dimension of $A'$ is $2$.} Write $z := ac = bd$ in
$A'$. The basis of $A'$ is
\[
e_1, \ldots, e_5, \ a, b, c, d, x, \ z, \ dx.
\]
The indecomposable projective right $A'$-modules have
$\mathbf{k}$-bases
\[
P_1 = \langle e_1, a, b, z\rangle, \quad
P_2 = \langle e_2, c\rangle, \quad
P_3 = \langle e_3, d, dx\rangle, \quad
P_4 = \langle e_4, x\rangle, \quad
P_5 = \langle e_5\rangle.
\]
The radicals are
\[
\rad P_1 = \langle a, b, z\rangle, \quad
\rad P_2 = \langle c\rangle, \quad
\rad P_3 = \langle d, dx\rangle, \quad
\rad P_4 = \langle x\rangle, \quad
\rad P_5 = 0.
\]
The projective cover of $S_1$ is $P_1 \to S_1$ with kernel
$\rad P_1$. The projective cover of $\rad P_1$ is
$\psi : P_2 \oplus P_3 \to \rad P_1$ sending $e_2 \mapsto a$ and
$e_3 \mapsto b$. Its kernel consists of the elements
$(\mu c, -\mu d + \tau dx)$ for $\mu, \tau \in \mathbf{k}$. The map
sending $(\mu c, -\mu d + \tau dx) \mapsto \mu e_4 - \tau x$ is an
isomorphism of right $A'$-modules onto $P_4$: the nonzero right
actions are by $e_4$ (identity on both sides) and by $x$ (which sends
$(c, -d)$ to $(0, -dx)$ on the left and $e_4$ to $x$ on the right,
and these match under the identification
$(0, -dx) \leftrightarrow x$). Hence $\Omega_{A'}^2 S_1 \cong P_4$,
projective, and $\pd_{A'} S_1 = 2$.

The simple $S_2$ has $\Omega_{A'} S_2 = \langle c\rangle \cong S_4$,
and $\Omega_{A'} S_4 = \langle x\rangle = P_5$, so
$\pd_{A'} S_2 = 2$. The simple $S_3$ has
$\Omega_{A'} S_3 = \langle d, dx\rangle \cong P_4$, so
$\pd_{A'} S_3 = 1$. Finally $\pd_{A'} S_4 = 1$ and
$\pd_{A'} S_5 = 0$. Hence
\[
\gldim A' = \sup\{2, 2, 1, 1, 0\} = 2.
\]

\emph{The global dimension of $A'/J$ is $3$.} The string quotient
$A'/J = \mathbf{k}Q'/(ac, bd, cx)$ has the same projective modules
as $A'$ except that $ac$ and $bd$ vanish independently, so
$P_1 = \langle e_1, a, b\rangle$ has dimension $3$ and radical
$\langle a, b\rangle$. The projective cover
$P_2 \oplus P_3 \to \rad P_1$ sending $e_2 \mapsto a$ and
$e_3 \mapsto b$ has kernel $S_4 \oplus P_4$: the component $S_4$
arises because $cA'/J$ is annihilated by the radical, and the
component $P_4$ arises because
$\langle d, dx\rangle \cong P_4$. Hence
$\Omega_{A'/J}^2 S_1 = S_4 \oplus P_4$, and the nonprojective
summand $S_4$ has projective dimension $1$ (via
$\Omega S_4 = P_5$). Therefore $\pd_{A'/J} S_1 = 3$. The other
simple modules satisfy $\pd_{A'/J} S_2 = 2$, $\pd_{A'/J} S_3 = 1$,
$\pd_{A'/J} S_4 = 1$, $\pd_{A'/J} S_5 = 0$, and hence
\[
\gldim(A'/J) = \sup\{3, 2, 1, 1, 0\} = 3. \qedhere
\]
\end{proof}

\begin{remark}\label{rem:stringquotientanomaly}
Example~\ref{ex:parallelarrows} shows that the global dimension of a
special biserial algebra and that of its associated string quotient
can differ. The coincidence in
Remark~\ref{rem:stringquotientdiamond} is therefore not a general
phenomenon.
\end{remark}

\subsection{Failure of monomial reduction preserving the relations}\label{subsec:monomialreductionfails}

Example~\ref{ex:parallelarrows} shows that the associated string
quotient, which discards binomial relations in favour of their
terms, does not preserve the global dimension. A more refined
reduction keeps every monomial that appears in a relation of the
original algebra, and allows additional monomial relations. We show
that even this refined reduction fails to achieve pointwise matching
of projective dimensions.

\begin{proposition}\label{prop:pointwisefails}
There exists a special biserial algebra $\Lambda = \mathbf{k}Q/I$
such that no admissible monomial ideal $I'$ on $Q$ containing every
monomial occurring in a relation of $I$ yields a string algebra
$\Lambda' = \mathbf{k}Q/I'$ with
$\pd_{\Lambda'}(S_v) = \pd_\Lambda(S_v)$ for every vertex $v$ of
$Q$.
\end{proposition}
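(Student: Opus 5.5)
We take the algebra $A' = \mathbf{k}Q'/(ac - bd,\, cx)$ of Example~\ref{ex:parallelarrows} as the witness $\Lambda$. That example already gives the projective dimensions of all its simples: $\pd S_1 = 2$, $\pd S_2 = 2$, $\pd S_3 = 1$, $\pd S_4 = 1$, $\pd S_5 = 0$. The commutativity diamond of Example~\ref{ex:commutativediamond} is not a witness. There the only admissible monomial ideal containing $ac$ and $bd$ is $(ac, bd)$, and a direct check shows that it matches the projective dimensions of all simples pointwise. So the extra arrow $x$ and the relation $cx$ are essential.

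The first step is to enumerate every candidate ideal $I'$. The nonzero paths of $\mathbf{k}Q'$ of length $\ge 2$ are
\[
ac,\; bd,\; cx,\; dx,\; acx,\; bdx,
\]
and there are no longer paths. Let $I'$ be admissible and monomial, with $I' \subseteq (\mathbf{k}Q'^+)^2$, and assume it contains the monomials $ac$, $bd$, $cx$ occurring in the relations of $I$. Then $I'$ already contains $acx$ and $bdx$. The only remaining freedom is whether $dx \in I'$. Hence exactly two candidates exist:
\[
I'_1 = (ac, bd, cx), \qquad I'_2 = (ac, bd, cx, dx).
\]
Admissibility holds in both cases, since all paths of length $3$ lie in the ideal. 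Whether $\mathbf{k}Q'/I'_j$ is a string algebra does not matter: we only need a mismatch of projective dimensions.

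The second step treats $I'_1$. The quotient $\mathbf{k}Q'/I'_1$ is precisely the associated string quotient $A'/J$. Example~\ref{ex:parallelarrows} computes $\pd_{A'/J} S_1 = 3 \neq 2 = \pd_{A'} S_1$, which rules out $I'_1$.

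The third step treats $I'_2$, and needs a short computation in $\Lambda'' = \mathbf{k}Q'/I'_2$. Here $P_3 = \langle e_3, d\rangle$, so
\[
\Omega_{\Lambda''} S_3 = d\Lambda'' \cong S_4.
\]
Next, $P_4 = \langle e_4, x\rangle$ is not simple, so $S_4$ is nonprojective, with $\Omega S_4 = x\Lambda'' \cong S_5 = P_5$ projective. Hence $\pd_{\Lambda''} S_3 = 2 \neq 1 = \pd_{A'} S_3$.

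The only delicate point is the completeness of the enumeration in the first step. We must check that no monomial ideal can contain a proper subpath of a length-$2$ generator, which would violate $I' \subseteq (\mathbf{k}Q'^+)^2$. We must also check that requiring the monomials of the relations of $I$ forces exactly $ac$, $bd$, $cx$. Once that is settled, the two case computations are routine and conclude the proof.
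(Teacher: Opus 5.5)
Your proposal is correct and follows essentially the same route as the paper. Both use the witness $A'$. Both enumerate $(ac, bd, cx)$ and $(ac, bd, cx, dx)$ as the only admissible monomial candidates. Both dispose of the first candidate with $\pd S_1 = 3 \neq 2$ from Example~\ref{ex:parallelarrows}.

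The only difference is in the second candidate. You exhibit the mismatch at $S_3$, namely $\pd_{\Lambda''} S_3 = 2 \neq 1$. The paper uses $S_1$, namely $\pd_{\Lambda''} S_1 = 1 + \max\{\pd S_2, \pd S_3\} = 3 \neq 2$, which requires computing $\pd_{\Lambda''} S_3 = 2$ anyway. Your version is therefore marginally shorter.
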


\begin{proof}
Take $\Lambda = A' = \mathbf{k}Q'/(ac - bd, cx)$, where $Q'$ is the
quiver of Figure~\ref{fig:parallelarrows}. The relations of $I$
contain the monomials $ac$, $bd$, and $cx$. By
Example~\ref{ex:parallelarrows}, $\pd_{A'}(S_1) = 2$ and
$\pd_{A'}(S_2) = 2$.

Let $I'$ be an admissible monomial ideal on $Q'$ containing
$\langle ac, bd, cx\rangle$. The length-two paths in $Q'$ that are
not in $\langle ac, bd, cx\rangle$ are $dx$; paths of length at
least three are automatic consequences of the length-two paths. The
only paths of length three in $Q'$ are $acx$ and $bdx$, and both
contain a path already in the ideal. Hence there are exactly two
admissible monomial ideals on $Q'$ containing
$\langle ac, bd, cx\rangle$, and they correspond to the two cases in
the following table.

\begin{center}
\begin{tabular}{lcc}
\hline
Generators of $I'$ & $\pd_{\Lambda'}(S_1)$ &
$\pd_{\Lambda'}(S_2)$ \\
\hline
$\langle ac, bd, cx\rangle$ & $3$ & $2$ \\
$\langle ac, bd, cx, dx\rangle$ & $3$ & $2$ \\
\hline
\end{tabular}
\end{center}

The first row is $\Lambda' = A'/J$, treated in
Example~\ref{ex:parallelarrows}, with $\pd(S_1) = 3$ and
$\pd(S_2) = 2$. For the second row, set
$\Lambda'' = \mathbf{k}Q'/(ac, bd, cx, dx)$. Then
$P_1 = \langle e_1, a, b\rangle$ and $P_3 = \langle e_3, d\rangle$
(since $dx = 0$), and the projective cover
$P_2 \oplus P_3 \to \rad P_1$ has kernel
$\langle(c, 0), (0, d)\rangle \cong S_4 \oplus S_4$. Since
$c\Lambda'' \cong S_4$ and $\Omega S_4 = P_5$, we have
$\pd_{\Lambda''}(S_2) = 2$. The syzygy
$\Omega_{\Lambda''}(S_3) = \langle d\rangle \cong S_4$ is
nonprojective, so $\pd_{\Lambda''}(S_3) = 2$. Hence
$\pd_{\Lambda''}(S_1) = 1 + \max\{\pd_{\Lambda''}(S_2),
\pd_{\Lambda''}(S_3)\} = 1 + 2 = 3$. So
$(\pd(S_1), \pd(S_2)) = (3, 2)$ in this case as well.

Neither case yields $(2, 2)$, and the pointwise matching fails.
\end{proof}

\begin{remark}\label{rem:weakerproblem}
Proposition~\ref{prop:pointwisefails} shows that the pointwise
reduction to a monomial ideal on the same quiver, under the
hypothesis that every monomial occurring in a relation of $I$ is
contained in $I'$, is impossible. It does not rule out the weaker
possibility that some monomial ideal on the same quiver, possibly
discarding some of the monomials of the relations of $I$, matches
the pointwise projective dimensions. Indeed, the ideal
$\langle bd, cx\rangle$ on $Q'$ yields
$(\pd S_1, \pd S_2, \pd S_3, \pd S_4, \pd S_5) = (2, 2, 1, 1, 0)$,
which coincides with the tuple for $A'$; this is not covered by
Proposition~\ref{prop:pointwisefails} because the monomial $ac$ is
discarded. The gap between preserving the relations and matching
the pointwise dimensions is therefore genuine, and the intermediate
problem is stated in Section~\ref{sec:future}.
\end{remark}

\begin{remark}\label{rem:stringquotientconclusion}
The only construction available for computing the global dimension
of a special biserial algebra is the string-syzygy quiver of
Section~\ref{sec:stringsyzygy}, applied to the string modules that
occur as syzygies of the simple modules of the algebra. The
associated string quotient, though useful in other contexts, is not
a shortcut for the computation of homological invariants. The
failure of the naive path-chain rule in
Proposition~\ref{prop:naivepathchain} and the failure of the
associated string quotient in
Example~\ref{ex:parallelarrows} arise from the same phenomenon: a
binomial relation couples two paths into a single syzygy, and no
reduction to individual paths can reproduce the coupling.
\end{remark}

\section{The universal formula in the biserial case}
\label{sec:biserial}

In Sections~\ref{sec:stringsyzygy} and~\ref{sec:stringalgebra} we
obtained combinatorial formulas for the global dimension of special
biserial and string algebras. In this section we consider the larger
class of biserial algebras. The universal formula of
Section~\ref{sec:universal} remains valid, since every biserial
algebra is finite dimensional; but the string-syzygy compression of
Section~\ref{sec:stringsyzygy} is no longer available, because the
Liu--Morin closure theorem does not extend beyond special biserial
algebras. We give an explicit example, due to Skowro\'nski and
Waschb\"usch, of a biserial algebra whose first syzygies are not
string modules. This example serves as the standard obstruction to
the compression. We then record what is known and what remains open,
and we note that the invariants we study are Morita invariant.

We follow the conventions of Sections~\ref{sec:background}
through~\ref{sec:stringsyzygy}. Throughout, $\Lambda$ is a finite
dimensional biserial algebra over $\mathbf{k}$.

\subsection{The universal formula for biserial algebras}
\label{subsec:universalbiserial}

Since every biserial algebra is finite dimensional by
Definition~\ref{def:biserial}, the universal height formula of
Section~\ref{sec:universal} applies without change.

\begin{corollary}\label{cor:universalbiserial}
Let $\Lambda$ be a finite dimensional biserial algebra. Then the
following hold.
\begin{enumerate}[label=\textup{(\arabic*)}]
\item For every vertex $[X]$ of the reachable syzygy quiver
$\Gamma_\Omega(\Lambda)$,
\[
\pd_\Lambda X = 1 + \operatorname{ht}([X]).
\]
\item $\gldim\Lambda = \sup\bigl(\{0\} \cup
\{1 + \operatorname{ht}([S]) : S \text{ a nonprojective simple
right } \Lambda\text{-module}\}\bigr)$.
\item $\findim\Lambda = \sup\bigl(\{0\} \cup \{1 + \operatorname{ht}([X]) :
[X] \in \Gamma_\Omega^{\mathrm{fin}}(\Lambda)\}\bigr)$.
\item $\gldim\Lambda < \infty$ if and only if
$\Gamma_\Omega(\Lambda)$ is finite and acyclic.
\end{enumerate}
\end{corollary}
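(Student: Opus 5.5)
The plan is to obtain Corollary~\ref{cor:universalbiserial} as a direct specialisation of the universal results of Section~\ref{sec:universal}. None of those results used any structural hypothesis beyond finite dimensionality of $\Lambda$ over $\mathbf{k}$. The first step is to note that a biserial algebra in the sense of Definition~\ref{def:biserial} is by definition a finite dimensional $\mathbf{k}$-algebra. Hence both $\Gamma_\Omega(\Lambda)$ of Definition~\ref{def:reachable} and $\Gamma_\Omega^{\mathrm{fin}}(\Lambda)$ of Definition~\ref{def:finitisticquiver} are well posed. Finite dimensionality is all that the well-posedness discussion after each definition required: Krull--Schmidt, existence of projective covers, and finite generation of syzygies.

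Next I would read off each part in turn.
\begin{enumerate}[label=\textup{(\arabic*)}]
\item Part \textup{(1)} is Theorem~\ref{thm:universalheight}\textup{(1)} verbatim. Its proof uses only Lemma~\ref{lem:pathsyzygy}, which rests on the additivity Lemma~\ref{lem:additivity} and the Krull--Schmidt theorem.
\item Part \textup{(2)} is Theorem~\ref{thm:universalheight}\textup{(2)}, which combines part \textup{(1)} with Lemma~\ref{lem:simplegldim}.
\item Part \textup{(3)} is Theorem~\ref{thm:findimheight}. It uses Lemma~\ref{lem:findimvertexfiniteheight} and the reduction of the finitistic supremum to indecomposable modules of finite projective dimension.
\item Part \textup{(4)} is Theorem~\ref{thm:universalheight}\textup{(3)}. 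It uses local finiteness, Lemma~\ref{lem:reachablelocalfinite}, to show finiteness of the quiver when the global dimension is finite, together with the cycle-pumping argument.
\end{enumerate}

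The only point requiring care is to confirm that no step in these proofs silently invoked the special biserial or string hypotheses, such as Proposition~\ref{thm:liumorin} or Corollary~\ref{cor:stringclosure}. Those inputs enter only in Section~\ref{sec:stringsyzygy} and later. I expect this audit, rather than any computation, to be the main (mild) obstacle. In particular I would check two things. First, the vertices of the quivers are arbitrary indecomposable modules, not string modules. Second, the split basic assumption is not needed for the height formulas, since they never refer to a bound quiver presentation.

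Once the audit is done, the corollary follows by quoting Theorems~\ref{thm:universalheight} and~\ref{thm:findimheight} with $\Lambda$ biserial.
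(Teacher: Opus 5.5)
Your proposal is correct and is the paper's own argument. A biserial algebra is finite dimensional by Definition~\ref{def:biserial}, so parts \textup{(1)}, \textup{(2)}, \textup{(4)} are Theorem~\ref{thm:universalheight}\textup{(1)}--\textup{(3)} and part \textup{(3)} is Theorem~\ref{thm:findimheight}, and your check that no special biserial input such as Proposition~\ref{thm:liumorin} enters those proofs goes through.
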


\begin{proof}
Since biserial algebras are finite dimensional by
Definition~\ref{def:biserial}, the corollary is a direct application
of Theorems~\ref{thm:universalheight} and~\ref{thm:findimheight}.
\end{proof}

\begin{remark}\label{rem:universalbiserialnotcombinatorial}
Corollary~\ref{cor:universalbiserial} gives an exact formula for the
global and finitistic dimensions of a biserial algebra, but it is not
combinatorial. The vertices of $\Gamma_\Omega(\Lambda)$ are
isomorphism classes of arbitrary modules, and there is no general
statement that identifies them with a class of objects that can be
described from the bound quiver alone. This is the gap that
separates the biserial case from the special biserial case resolved
in Section~\ref{sec:stringsyzygy}.
\end{remark}

\subsection{A genuinely biserial, non-special algebra}
\label{subsec:skowronski}

We present the standard example of a biserial algebra that is not
special biserial, due to Skowro\'nski and Waschb\"usch. We verify
that it is biserial, that it is not special biserial, and that its
first syzygies are not string modules.

\begin{example}[Skowro\'nski--Waschb\"usch]\label{ex:skowronski}
Let $Q$ be the quiver with vertices $1, 2, 3, 4, 5$ and arrows
\[
a : 1 \to 2, \quad b : 2 \to 4, \quad c : 2 \to 3, \quad
d : 3 \to 4, \quad e : 4 \to 5,
\]
as in Figure~\ref{fig:skowronski}. Let
\[
C := \mathbf{k}Q/(be, ab - acd).
\]
Then $C$ is a finite dimensional biserial algebra that is not
special biserial. Moreover $\gldim C = 2$, and the first syzygy of
$S_1$ is a nonprojective module whose coefficient graph contains a
triangle, so it is not a string module. $\triangle$
\end{example}

\vspace{-0.8em}
\begin{figure}[h]
\centering
\begin{tikzpicture}[scale=1.0,>=Stealth]
\node (1) at (0,0) {$1$};
\node (2) at (2,0) {$2$};
\node (3) at (2,-1.6) {$3$};
\node (4) at (4,0) {$4$};
\node (5) at (6,0) {$5$};
\draw[->] (1) -- node[above] {$a$} (2);
\draw[->] (2) -- node[above] {$b$} (4);
\draw[->] (2) -- node[below left] {$c$} (3);
\draw[->] (3) -- node[below right] {$d$} (4);
\draw[->] (4) -- node[above] {$e$} (5);
\end{tikzpicture}
\caption{The Skowro\'nski--Waschb\"usch quiver.}
\label{fig:skowronski}
\end{figure}

\begin{proposition}\label{prop:skowronskibiserial}
The algebra $C$ of Example~\ref{ex:skowronski} is biserial.
\end{proposition}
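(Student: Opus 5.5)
The plan is to verify Definition~\ref{def:biserial} directly, by writing down an explicit $\mathbf{k}$-basis of $C$ and reading off both the right projectives $e_iC$ and the left projectives $Ce_i$. Everything is finite, so the proof is a bookkeeping argument. The key preliminary is to pin down the ideal $I=(be,\ ab-acd)$ degree by degree, using that $Q$ is acyclic with very few paths.

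\emph{Step 1: the basis of $C$.} The paths of length two are $ab, ac, cd, de, be$. Those of length three are $acd, abe, cde$, and the only path of length four is $acde$. Between a fixed pair of vertices, $I$ is spanned by explicit elements:
\begin{itemize}
\item from $2$ to $5$, only by $be$;
\item from $1$ to $4$, only by $ab-acd$;
\item from $1$ to $5$, by $abe$ and $(ab-acd)e=abe-acde$, so both $abe$ and $acde$ lie in $I$.
\end{itemize}
Hence the basis is $e_1,\dots,e_5$, $a,b,c,d,e$, $ac, cd, de$, $z:=ab=acd$ and $cde$. We also have $be=0$, $ze=0$, and $cde\neq 0$. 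Along the way we record that $C$ is not special biserial, since $ab\notin I$ and $ac\notin I$ violate (SB2) at $a$; the statement does not require this, but it costs nothing.

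\emph{Step 2: the right projectives.}
\begin{itemize}
\item $P_1=\langle e_1,a,ac,z\rangle$ has $\rad P_1=aC=\langle a,ac,z\rangle$. This is uniserial with chain $aC\supset\langle ac,z\rangle\supset\langle z\rangle\supset 0$, because $a\cdot c=ac$, $a\cdot b=z$, $ac\cdot d=z$, and $z\cdot e=0$. We take $U=aC$ and $V=0$.
\item $P_2=\langle e_2,b,c,cd,cde\rangle$ has $\rad P_2=bC\oplus cC$, where $bC=\langle b\rangle$ (as $be=0$) and $cC=\langle c,cd,cde\rangle$ is uniserial. Their intersection is $0$.
\item $P_3=\langle e_3,d,de\rangle$, $P_4=\langle e_4,e\rangle$ and $P_5=S_5$ have uniserial (or zero) radicals.
\end{itemize}

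\emph{Step 3: the left projectives.}
\begin{itemize}
\item $Ce_4=\langle e_4,b,d,cd,z\rangle$ has radical $Cb+Cd$. Here $Cb=\langle b,z\rangle$ is uniserial since $a\cdot b=z$, and $Cd=\langle d,cd,z\rangle$ is uniserial since $c\cdot d=cd$ and $a\cdot cd=z$. Their intersection is $\langle z\rangle$, which is simple.
\item $Ce_5=\langle e_5,e,de,cde\rangle$ is uniserial, because $be=0$ and $acde=0$.
\item $Ce_3=\langle e_3,c,ac\rangle$ and $Ce_2=\langle e_2,a\rangle$ are uniserial, and $Ce_1$ is simple.
\end{itemize}

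The only real obstacle is Step 1. One must certify that $cde\neq 0$ and that $ab=acd\neq 0$, while $abe$ and $acde$ vanish. This means exhibiting the homogeneous components of $I$ between each pair of vertices, which is easy here because only the components from $2$ to $5$, from $1$ to $4$ and from $1$ to $5$ are nonzero. The uniseriality checks then reduce to observing that each module has a one-dimensional top at every stage of its radical series, which the explicit actions above display.
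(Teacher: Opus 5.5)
Your proof is correct and follows the paper's route: you write down the fifteen-element basis and then check the radicals of the right projectives $e_iC$ and the left projectives $Ce_i$ directly. The only non-uniserial radicals are $\rad P_2 = bC\oplus cC$ and $\rad(Ce_4)=Cb+Cd$ with $Cb\cap Cd=\mathbf{k}z$, and your description of the homogeneous components of $I$ (spanned by $be$, by $ab-acd$, and by $abe, acde$) justifies the basis that the paper only lists. One caution about your aside: failure of (SB2) at $a$ in the given presentation does not by itself show that $C$ is not special biserial, because other presentations must also be excluded. For instance, replacing $b$ by $b-cd$ kills $ab$, and (SB2) then fails at $e$ instead; this is why the paper argues over all presentations in Proposition~\ref{prop:skowronskinonspecial}.
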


\begin{proof}
We use the notation $z := ab = acd$ in $C$. The $\mathbf{k}$-basis of
$C$ consists of the residue classes of the paths of $Q$ that do not
lie in the ideal $(be, ab - acd)$; these are
\[
e_1, e_2, e_3, e_4, e_5, \ a, b, c, d, e, \ ac, cd, de, cde, \ z,
\]
fifteen elements in total.

\emph{Right projectives.} The indecomposable projective right
$C$-modules $P_i = e_i C$ have the $\mathbf{k}$-bases consisting of
the surviving paths starting at $i$:
\begin{align*}
P_1 &= \langle e_1, a, ac, z\rangle, \\
P_2 &= \langle e_2, b, c, cd, cde\rangle, \\
P_3 &= \langle e_3, d, de\rangle, \\
P_4 &= \langle e_4, e\rangle, \\
P_5 &= \langle e_5\rangle.
\end{align*}
The radicals are
\begin{align*}
\rad P_1 &= \langle a, ac, z\rangle = aC, \\
\rad P_2 &= \langle b, c, cd, cde\rangle = bC \oplus cC, \\
\rad P_3 &= \langle d, de\rangle = dC, \\
\rad P_4 &= \langle e\rangle = eC, \\
\rad P_5 &= 0.
\end{align*}
The modules $aC$, $bC$, $cC$, $dC$, $eC$ are uniserial: the complete
submodule chain of $aC$ is
\[
0 \subset \mathbf{k} z \subset \langle ac, z\rangle \subset aC,
\]
the complete submodule chain of $cC$ is
\[
0 \subset \mathbf{k} cde \subset \langle cd, cde\rangle \subset cC,
\]
the module $dC$ has the submodule chain
$0 \subset \mathbf{k} de \subset dC$, and $bC$, $eC$ are
one-dimensional. In $\rad P_2 = bC \oplus cC$, the two uniserial
summands intersect at zero, so the biserial condition holds for
$P_2$. The remaining radicals are uniserial, so the condition holds
for them as well.

\emph{Left projectives.} An analogous computation gives
\begin{align*}
\rad(Ce_1) &= 0, \\
\rad(Ce_2) &= Ca = \langle a\rangle, \\
\rad(Ce_3) &= \langle c, ac\rangle = Cc, \\
\rad(Ce_4) &= \langle b, d, cd, z\rangle = Cb + Cd, \\
\rad(Ce_5) &= \langle e, de, cde\rangle = Ce.
\end{align*}
The modules $Ca$, $Cc$, $Cb$, $Cd$, $Ce$ are uniserial; in
particular $Cb = \langle b, z\rangle$ and
$Cd = \langle d, cd, z\rangle$, and their intersection
$Cb \cap Cd = \mathbf{k}z$ is simple. Hence all indecomposable left
projectives satisfy the biserial condition.

Therefore $C$ is biserial.
\end{proof}

\begin{proposition}\label{prop:skowronskinonspecial}
The algebra $C$ of Example~\ref{ex:skowronski} is not special
biserial.
\end{proposition}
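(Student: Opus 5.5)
The plan is to show that $C$ fails condition (SB2) of Definition~\ref{def:specialbis} at the arrow $a$. We must also rule out the possibility that some other presentation of $C$ is special biserial, since being special biserial is defined up to Morita equivalence.

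\emph{Step 1: failure of (SB2) in the given presentation.} The arrows starting at $t(a) = 2$ are $b$ and $c$. By Proposition~\ref{prop:skowronskibiserial}, the basis of $C$ contains both $ac$ and $z = ab$ as nonzero elements. So $ab \notin I$ and $ac \notin I$, where $I = (be, ab - acd)$.

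To make this rigorous, I would note that every element of $I$ is a combination of $u(be)v$ and $u(ab - acd)v$. Consider the component of $I$ spanned by paths from $1$ to $4$. There the only contributions come from $ab - acd$ itself, because $be$ ends at $5$ and nothing can be composed after it to land at $4$. This gives $I e_4 \cap e_1 \mathbf{k}Q = \mathbf{k}(ab - acd)$. Neither $ab$ nor $ac$ lies in this span: $ab$ would need coefficient $1$ and $0$ simultaneously, and $ac$ has length two and goes from $1$ to $3$, where $I$ has no component. Hence $a$ has two continuations $b, c$ with $ab, ac \notin I$, violating (SB2).

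\emph{Step 2: independence of the presentation.} This is the main obstacle. I would argue intrinsically, using the characterisation that in a special biserial algebra, for each arrow $\alpha$ the right module $\alpha\Lambda$ is uniserial, and $\alpha\Lambda \cdot \rad$ is generated by at most one arrow continuation. In $C$, the module $\rad P_1 = aC$ has top $S_2$. Its radical $\langle ac, z\rangle$ has top $S_3$ and socle $S_4$.

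The cleaner invariant route goes through the Gabriel quiver, which is determined by $C$. Any presentation $C \cong \mathbf{k}Q/I'$ with the same $Q$ differs by a change of arrows within each space $e_i(\rad/\rad^2)e_j$, and every such space here is one-dimensional. So each arrow can only be rescaled, up to corrections in $\rad^2$. The corrections available are limited: the only parallel path of length $\ge 2$ is $cd$, parallel to $b$. The substitution $b' = b + \lambda cd$ gives $ab' = ab + \lambda acd = (1+\lambda)z$. For $\lambda \ne -1$ this is still nonzero, alongside $ac \ne 0$. For $\lambda = -1$ we get $ab' = 0$, but then $b'e = be - cde = -cde \neq 0$ while $cd \cdot e \neq 0$, so now the arrow $c$...

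Here I would instead check the left-hand condition. At vertex $4$ the incoming arrows are $b'$ and $d$, and the compositions $c\,d \ne 0$ and $c\,b'$ are not both zero for the arrow $c$... Rather, I would check whether $b'$ with predecessor $a$ and $d$ with predecessor $c$ now satisfy (SB2), and look for failure at $e$: both $b'e = -cde \neq 0$ and $de \neq 0$. Then $e$ has two predecessors $b'$ and $d$ outside $I'$, violating the dual half of (SB2).

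So every admissible presentation fails (SB2). Since $C$ is basic, Morita equivalence to a basic special biserial algebra would mean isomorphism to one, and this would yield such a presentation. This contradiction completes the argument. The delicate part is enumerating the changes of arrows and checking both halves of (SB2) in each case.
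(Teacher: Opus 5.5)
Your proposal is correct and follows essentially the same approach as the paper. The paper also uses that the Gabriel quiver of $C$ is $Q$ and normalises any presentation so that only $b$ can change, to $b+\mu cd$. It then shows that (SB2) fails at $a$ when $\mu\neq -1$, and fails at $e$ (which then has the two predecessors $b+\mu cd$ and $d$) when $\mu=-1$. Your Step 1 is just the case $\mu=0$. Your closing remark, that a basic algebra Morita equivalent to a basic special biserial algebra is isomorphic to it, makes explicit a reduction the paper leaves implicit.
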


\begin{proof}
The Gabriel quiver of $C$ is the original quiver $Q$. Indeed, under
our left-to-right path composition convention, the dimension of
$e_i(\rad C/\rad^2 C)e_j$ counts the number of surviving paths of
positive length from $i$ to $j$ that do not lie in $\rad^2 C$. For
$(i, j) = (1, 2)$, $(2, 3)$, $(3, 4)$, and $(4, 5)$, the spaces
$e_i(\rad C)e_j$ are one-dimensional and spanned by $a$, $c$, $d$,
and $e$ respectively. For $(i, j) = (2, 4)$, we have
$e_2(\rad C)e_4 = \langle b, cd\rangle$, and since $cd = c \cdot d
\in \rad^2 C$, its image in $e_2(\rad C/\rad^2 C)e_4$ vanishes, so
$\dim_{\mathbf{k}} e_2(\rad C/\rad^2 C)e_4 = 1$. For all other pairs
$(i, j)$ the space is zero, so the Gabriel quiver of $C$ is $Q$.

Let $\Phi : \mathbf{k}Q \twoheadrightarrow C$ be any admissible bound
quiver presentation of $C$ with kernel $I' = \ker \Phi$. Since the
spaces $e_1(\rad C)e_2$, $e_2(\rad C)e_3$, $e_3(\rad C)e_4$, and
$e_4(\rad C)e_5$ are one-dimensional, after rescaling the arrows of
$Q$ by nonzero scalars we may assume that $\Phi(a) = a$,
$\Phi(c) = c$, $\Phi(d) = d$, and $\Phi(e) = e$, while
\[
\Phi(b) = b + \mu cd \quad \text{for some } \mu \in \mathbf{k},
\]
since $e_2(\rad C)e_4 = \langle b, cd\rangle$ and $cd \in \rad^2 C$.

We consider two cases according to the value of $\mu$.
\begin{itemize}
\item If $\mu \neq -1$, then in $C$ we have
$\Phi(ab) = a(b + \mu cd) = ab + \mu acd = (1 + \mu)z \neq 0$ and
$\Phi(ac) = ac \neq 0$. Thus $ab \notin I'$ and $ac \notin I'$, so
the arrow $a$ admits two distinct continuations $b$ and $c$ outside
$I'$, violating condition \textup{(SB2)} at $a$.
\item If $\mu = -1$, then in $C$ we have
$\Phi(be) = (b - cd)e = be - cde = -cde \neq 0$ and
$\Phi(de) = de \neq 0$. Thus $be \notin I'$ and $de \notin I'$, so
the arrow $e$ admits two distinct predecessors $b$ and $d$ outside
$I'$, violating condition \textup{(SB2)} at $e$.
\end{itemize}
In every admissible presentation of $C$, condition \textup{(SB2)}
fails. Therefore $C$ is not special biserial.
\end{proof}

\begin{proof}[Completion of the proof of Example~\ref{ex:skowronski}]
We compute the global dimension of $C$ and show that the first
syzygy of $S_1$ is not a string module.

\emph{The global dimension of $C$ is $2$.} Write $z := ab = acd$ in
$C$. The projective cover of $S_1$ is $P_1 \to S_1$ with kernel
$\rad P_1 = aC = \langle a, ac, z\rangle$. Consider the right
$C$-module homomorphism
\[
\psi : P_2 \longrightarrow aC, \qquad e_2 \longmapsto a.
\]
This map is well defined: assigning $e_2 \mapsto a$ extends uniquely
to a $C$-linear map $P_2 \to aC$ because $a \in aC e_2$, and the
assignment is compatible with the relations, since for a path $p$
starting at $2$ that vanishes in $C$, the product $ap$ also vanishes
in $C$. For instance
$\psi(e_2 \cdot c) = \psi(c) = a \cdot c = ac = \psi(e_2) \cdot c$,
and $\psi(e_2 \cdot b) = \psi(b) = ab = z = \psi(e_2) \cdot b$.

The map $\psi$ is surjective: $\psi(e_2) = a$, $\psi(c) = ac$, and
$\psi(b) = z$. We compute the kernel. An element of $P_2$ has the
form $\lambda e_2 + \mu b + \nu c + \rho cd + \sigma cde$ with
$\lambda, \mu, \nu, \rho, \sigma \in \mathbf{k}$. Its image is
\[
\lambda a + \mu ab + \nu ac + \rho acd + \sigma acde
= \lambda a + \nu ac + (\mu + \rho) z,
\]
since $acde = 0$ in $C$. This vanishes if and only if
$\lambda = \nu = 0$ and $\mu + \rho = 0$. Hence
\[
\ker \psi
= \{\mu b - \mu cd + \sigma cde : \mu, \sigma \in \mathbf{k}\}
= \langle b - cd, cde\rangle.
\]
Set $M := \langle b - cd, cde\rangle$. The set $\{b - cd, cde\}$ is a
$\mathbf{k}$-basis of $M$, and $M$ is a right $C$-submodule of
$P_2$: indeed,
\[
(b - cd) \cdot e_4 = b - cd, \qquad
(b - cd) \cdot e = be - cde = -cde, \qquad
cde \cdot e_5 = cde,
\]
and all other concatenations of $b - cd$ and $cde$ by elements of
$C$ evaluate to zero. The map
\[
P_4 \longrightarrow M, \qquad e_4 \mapsto b - cd, \quad
e \mapsto -cde,
\]
is an isomorphism of right $C$-modules; the verification follows
the same pattern as the isomorphism \(dC \cong P_4\) established
below. Hence \(M \cong P_4\), which is projective, and
\[
\Omega_C^2 S_1 = M \cong P_4
\]
is projective. Therefore $\pd_C S_1 = 2$.

The remaining simple modules are treated as follows. The projective
cover of $S_2$ is $P_2 \to S_2$ with kernel
$\rad P_2 = bC \oplus cC$. The module $bC = \langle b\rangle$ is
isomorphic to $S_4$; the module $cC = \langle c, cd, cde\rangle$ is
isomorphic to $P_3$ via \(e_3 \mapsto c\), \(d \mapsto cd\),
\(de \mapsto cde\). Hence
\(\Omega_C S_2 = bC \oplus cC \cong S_4 \oplus P_3\), and the
nonprojective summand is $bC \cong S_4$. Then
$\Omega_C^2 S_2 = \Omega_C(S_4) = eC \cong S_5 = P_5$, projective,
so $\pd_C S_2 = 2$.

The projective cover of $S_3$ is $P_3 \to S_3$ with kernel
$\rad P_3 = dC = \langle d, de\rangle$. The map \(P_4 \to dC\)
sending \(e_4 \mapsto d\) and \(e \mapsto de\) is an isomorphism of
right $C$-modules, so \(dC \cong P_4\) and $\pd_C S_3 = 1$. The
projective cover of $S_4$ is $P_4 \to S_4$ with kernel
\(eC \cong S_5 = P_5\), so $\pd_C S_4 = 1$. Finally $\pd_C S_5 = 0$.
By Lemma~\ref{lem:simplegldim},
\[
\gldim C = \sup\{2, 2, 1, 1, 0\} = 2.
\]

\emph{The first syzygy of $S_1$ is not a string module.} The module
$aC = \langle a, ac, z\rangle$ has nonzero arrow actions
\[
a \cdot c = ac, \qquad ac \cdot d = z, \qquad a \cdot b = z.
\]
Since the nonzero weight spaces $aCe_2 = \mathbf{k}a$,
$aCe_3 = \mathbf{k}ac$, and $aCe_4 = \mathbf{k}z$ are all
one-dimensional, any homogeneous basis of $aC$ is a scalar multiple
of $\{a, ac, z\}$; thus the underlying unoriented graph of its
coefficient graph is rigidly a triangle for every homogeneous basis,
and $aC$ is not a string module.
\end{proof}

\begin{remark}\label{rem:skowronskiconsequence}
Example~\ref{ex:skowronski} shows that for a general biserial
algebra the first syzygy of a simple module need not be a string
module. This is the obstruction to extending the string-syzygy
formula of Theorem~\ref{thm:gldimspecialbis} to biserial algebras.
The universal formula of Corollary~\ref{cor:universalbiserial}
still applies, but it requires the explicit computation of $aC$ and
its syzygies as abstract modules rather than as string diagrams.
\end{remark}

\begin{remark}\label{rem:skowronskiquiver}
The reachable syzygy quiver of $C$ has vertices $[S_1]$, $[S_2]$,
$[S_3]$, $[S_4]$, and $[aC]$. The arrows are
\[
[S_1] \to [aC], \qquad [S_2] \to [S_4],
\]
while $[S_3]$, $[S_4]$, and $[aC]$ are sinks. The longest directed
path originating from a simple state has length one, so
Corollary~\ref{cor:universalbiserial} gives
\(\gldim C = 1 + 1 = 2\), in agreement with the computation above.
\end{remark}

\subsection{What is known and what is not known}
\label{subsec:knownunknown}

We state what is known about the global dimension of biserial
algebras and what remains open.

\begin{theorem}\label{thm:whatisknown}
Let $\Lambda$ be a finite dimensional biserial algebra over a field
$\mathbf{k}$. Then the following hold.
\begin{enumerate}[label=\textup{(\arabic*)}]
\item The universal syzygy-quiver formula of
Corollary~\ref{cor:universalbiserial} holds for $\Lambda$.
\item If $\mathbf{k}$ is algebraically closed and $\Lambda$ is
representation finite, then $\Lambda$ is Morita equivalent to a
special biserial algebra. After passing to the basic representative,
the combinatorial string-syzygy formula of
Theorem~\ref{thm:gldimspecialbis} applies.
\item If $\mathbf{k}$ is algebraically closed, every basic biserial
algebra admits a bisected presentation in the sense of
\cite{VFCB98}, and in particular a bound quiver presentation;
consequently the syzygy transitions can be computed from this
presentation by linear algebra.
\item There exists a representation-infinite biserial algebra for
which the reachable syzygy quiver cannot be uniformly compressed to
a path-state or string-state quiver.
\end{enumerate}
\end{theorem}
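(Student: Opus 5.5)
The plan is to prove the four parts separately, since they have quite different character.

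Part (1) is immediate from Corollary~\ref{cor:universalbiserial}, which was itself obtained from Theorems~\ref{thm:universalheight} and~\ref{thm:findimheight}. Those results use only that $\Lambda$ is finite dimensional, together with Lemma~\ref{lem:additivity} and Krull--Schmidt.

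For part (2) I would cite the classical literature rather than reprove it. Over an algebraically closed field, the structure theory of representation-finite biserial algebras (Skowro\'nski--Waschb\"usch \cite{SW83}) shows that such an algebra is special biserial. I will state this as: its basic algebra admits a bound quiver presentation satisfying (SB1) and (SB2). Global and finitistic dimensions are Morita invariant (Section~\ref{subsec:boundquivers}), so we may replace $\Lambda$ by its basic representative $\mathbf{k}Q/I$. Theorem~\ref{thm:gldimspecialbis} then applies verbatim, with Proposition~\ref{thm:liumorin} supplying the closure of string states.

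For part (3), again over an algebraically closed field, I would invoke the bisected presentation theorem of \cite{VFCB98}. Every basic biserial algebra is then isomorphic to $\mathbf{k}Q/I$ for a bound quiver whose arrows at each vertex split into two "sides" compatible with the uniserial decomposition of radicals. The computability claim then follows the pattern of Construction~\ref{con:stringtransition}, steps (1)--(4), which never used special biseriality:
\begin{itemize}
\item For a finite dimensional module $X$ given by matrices on a homogeneous basis, the top $X/X\rad\Lambda$ and the projective cover are obtained by finite linear algebra over $\mathbf{k}$.
\item The kernel of $P(X)\to X$ is likewise obtained by finite linear algebra.
\item The decomposition into indecomposables can be computed by computing $\operatorname{End}(\ker\pi)$ and splitting idempotents, again linear algebra over $\mathbf{k}$.
\end{itemize}
The Skowro\'nski--Waschb\"usch computation of Example~\ref{ex:skowronski} serves as a template.

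Part (4) is the main obstacle, because ``uniform compression'' must first be given a precise meaning. I would formalise it as follows: a compression to string states is a labelling of vertices of $\Gamma_\Omega(\Lambda)$ by string modules (or paths) over some bound quiver presentation, such that each state is realised by its label. I would then use the algebra $C$ of Example~\ref{ex:skowronski}. By Remark~\ref{rem:skowronskiquiver}, $[aC]$ is a vertex of $\Gamma_\Omega(C)$, and by the completion of the proof of Example~\ref{ex:skowronski} its coefficient graph is rigidly a triangle in every homogeneous basis. Hence $aC$ is not a string module, and it is not a principal ideal $p\Lambda$ of path type over any presentation either. Here I would use Proposition~\ref{prop:skowronskinonspecial}: in every admissible presentation $\Phi(b)=b+\mu cd$, and the same weight-space rigidity argument applies for each $\mu$. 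So no path-state or string-state labelling can exist.

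It remains to check that $C$ is representation-infinite. My first attempt would be to exhibit a one-parameter family of modules. A natural candidate is the Kronecker-type subquiver at vertices $2$ and $4$ formed by $b$ and $cd$. Restricting to modules annihilated by $e_1$ and $e_5$, and choosing dimension vectors so that $b$ and $cd$ act as independent maps, should produce a Kronecker family. If that fails, I would fall back on a Tits-form argument or on citing \cite{SW83}, where this algebra appears as a representation-infinite example.
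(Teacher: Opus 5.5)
Parts (1)--(3) agree with the paper, which routes (2) through Jans's distributivity theorem and \cite[Lemma~2]{SW83}. Your Kronecker idea for representation-infiniteness also works. Modules with $Me_1=Me_5=0$ are exactly the modules over $C/C(e_1+e_5)C\cong\mathbf{k}\widetilde{A}_2$, which is the paper's argument. The assignment $(V,W,f,g)\mapsto(V\xrightarrow{c=\mathrm{id}}V\xrightarrow{d=g}W,\ b=f)$ embeds Kronecker modules fully faithfully, so $C$ is representation-infinite over every field.

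The genuine gap is in the compression half of (4). Your claim that the weight-space rigidity argument ``applies for each $\mu$'' fails at $\mu=-1$.
\begin{itemize}
\item Put $b'=b-cd$. Then $C\cong\mathbf{k}Q/(ab',\,b'e+cde)$; this is the case $\mu=-1$ of Proposition~\ref{prop:skowronskinonspecial}.
\item In this presentation $a\cdot b'=ab-acd=0$, so in the basis $\{a,ac,z\}$ the coefficient graph of $aC$ is the path $a\xrightarrow{c}ac\xrightarrow{d}z$.
\item Hence $aC\cong M(2\xrightarrow{c}3\xrightarrow{d}4)$, which is annihilated by $ab'$ and $b'e+cde$. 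So $aC$ is a string module for this presentation.
\item The remaining vertices $[S_1],\dots,[S_4]$ of $\Gamma_\Omega(C)$ are simple, so every state is a string module here.
\end{itemize}
Under your own formalization (labels ``over some bound quiver presentation''), statement (4) is therefore false for $C$, and no argument can close it. Your path-state claim is also wrong: $aC$ is by definition the principal right ideal of the arrow $a$, in every presentation.

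The repair is to do what the paper does and fix the given presentation $C=\mathbf{k}Q/(be,\,ab-acd)$. This presentation is the input to the bound-quiver procedures of Sections~\ref{sec:stringsyzygy}--\ref{sec:stringalgebra}. For it, your rigidity argument is correct, and $\Omega_C S_1=aC$ is not a string module. If path states are read as in Remark~\ref{rem:compressionmap}, i.e.\ as the string modules $p\Lambda$, they are then excluded \emph{a fortiori}.

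If you want a direct path-level witness, look at the transition rather than the state. In this presentation $\mathcal{M}(a)=\{cde\}$, so the rule of Lemma~\ref{lem:kernel} predicts the one-dimensional $\Omega_C(aC)\cong cde\,C$. The actual kernel is $\ker\mu_a=\langle b-cd,\,cde\rangle\cong P_4$, which is two-dimensional and generated by a binomial. Note that the heights in $\mathcal{R}(C)$ still give the correct projective dimensions of all simples here. So $C$ witnesses a failure of the identification of syzygies, not of the numerical formula.
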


\begin{proof}
Part \textup{(1)} is Corollary~\ref{cor:universalbiserial}.

For part \textup{(2)}, every representation-finite algebra over an
algebraically closed field has a distributive ideal lattice by
Jans's theorem (see \cite[p.~173]{SW83}), and every distributive
biserial algebra is Morita equivalent to a special biserial algebra
by \cite[Lemma~2]{SW83}.

Part \textup{(3)} follows from the main theorem in
\cite[Section~1]{VFCB98}, which is stated for basic biserial
algebras over an algebraically closed field.

For part \textup{(4)}, let $C$ be the Skowro\'nski--Waschb\"usch
algebra of Example~\ref{ex:skowronski}. Over an algebraically closed
field, since $C$ is basic, biserial, and not special biserial, it is
representation-infinite by the contrapositive of part \textup{(2)}
(see also \cite[p.~176]{SW83}). In fact, $C$ is
representation-infinite over every field $\mathbf{k}$: the quotient
algebra $C / C(e_1 + e_5)C$ is isomorphic to the hereditary path
algebra $\mathbf{k}Q_{\{2,3,4\}}$ of the Euclidean quiver of type
$\widetilde{A}_2$ formed by the vertices $\{2, 3, 4\}$ and arrows
$b : 2 \to 4$, $c : 2 \to 3$, $d : 3 \to 4$, which is
representation-infinite over every field $\mathbf{k}$ (see
\cite[Chapter~VII]{ASS06}). Since the first syzygy of $S_1$ over $C$
is not a string module (Example~\ref{ex:skowronski}), no compression
of the reachable syzygy quiver to a path-state or string-state
quiver is available for $C$.
\end{proof}

\begin{remark}\label{rem:biserialquotient}
The question of whether every finite dimensional biserial algebra
$\Lambda$ can be realised as an admissible quotient
$\Gamma \twoheadrightarrow \Lambda$ of a special biserial algebra
$\Gamma$ has a negative answer.

If $\Gamma = \mathbf{k}Q/I$ is special biserial and
$\Lambda = \Gamma/J$ is an admissible quotient (so that
$J \subseteq \rad^2\Gamma$), then $\Lambda \cong \mathbf{k}Q/J'$ for
an admissible ideal $J'$ containing $I$. Condition \textup{(SB1)} is inherited from $(Q, I)$.
For condition \textup{(SB2)}, if $\alpha\beta \notin J'$ for some
arrows $\alpha$, $\beta$, then $\alpha\beta \notin I$. Hence
$\beta$ is a continuation of $\alpha$ outside $I$, and by the
special biserial property of $\Gamma$ there is at most one such
continuation. Consequently condition \textup{(SB2)} is inherited,
and $\Lambda$ is special biserial.

Therefore the Skowro\'nski--Waschb\"usch algebra $C$ of
Example~\ref{ex:skowronski}, which is biserial but not special
biserial, cannot be a quotient of any special biserial algebra.
\end{remark}

\begin{remark}\label{rem:gaprepresentationinfinite}
The gap in the theory is isolated within biserial algebras for
which the first syzygy of a simple module fails to be a string
module. For such algebras, the universal formula of
Corollary~\ref{cor:universalbiserial} holds, but the state space is
not under combinatorial control. The problem of finding a class of
states for these algebras that admits a combinatorial description
from the bound quiver and is closed under first syzygies is stated
as Problem~\ref{prob:biserialcompression}.
\end{remark}

\subsection{Morita invariance}\label{subsec:moritainvariance}

We close this section by verifying that the invariants we study are
Morita invariant. This justifies the choice of a basic
representative throughout the paper.

\begin{proposition}\label{prop:moritainvariance}
Let $\Lambda$ and $\Lambda'$ be Morita equivalent finite dimensional
$\mathbf{k}$-algebras. Then
\[
\gldim\Lambda = \gldim\Lambda', \qquad
\findim\Lambda = \findim\Lambda'.
\]
Moreover, if $\Lambda$ is biserial, then so is $\Lambda'$.
\end{proposition}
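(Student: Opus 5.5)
The plan is to pass through an equivalence of module categories. Since $\Lambda$ and $\Lambda'$ are Morita equivalent, there is a $\mathbf{k}$-linear equivalence $F : \operatorname{mod}\Lambda \to \operatorname{mod}\Lambda'$ of categories of finitely generated right modules. Concretely, $F = -\otimes_\Lambda P$ for a progenerator $P$ with $\operatorname{End}(P_{\Lambda'})\cong\Lambda$. First I would record the categorical properties of $F$. It is exact, since equivalences of abelian categories preserve kernels and cokernels. It preserves projectives, because projectivity is the lifting property against epimorphisms. It preserves direct sums and indecomposability. It preserves superfluous submodules, being an isomorphism of subobject lattices. It sends simple modules to simple modules. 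Hence $F$ carries projective covers to projective covers, and gives $F(\Omega_\Lambda M)\cong\Omega_{\Lambda'}F(M)$.

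Next I would show that $\pd_{\Lambda'}F(M)=\pd_\Lambda M$ for every finitely generated $M$. By induction, $F(\Omega^n_\Lambda M)\cong\Omega^n_{\Lambda'}F(M)$, and this module is projective exactly when $\Omega^n_\Lambda M$ is. Alternatively, apply $F$ to a minimal projective resolution. Because $F$ is essentially surjective, taking the supremum over all $M$ gives $\gldim\Lambda=\gldim\Lambda'$. Restricting to modules with $\pd<\infty$, which is a class $F$ preserves in both directions, gives $\findim\Lambda=\findim\Lambda'$. As a bonus, $F$ induces an isomorphism of multigraphs $\Gamma_\Omega(\Lambda)\cong\Gamma_\Omega(\Lambda')$, compatible with Theorem~\ref{thm:universalheight}, though this is not needed.

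For biseriality I would use that Definition~\ref{def:biserial} is phrased lattice-theoretically. It asks that the radical of each indecomposable projective be a sum $U+V$ of uniserial submodules with $U\cap V$ zero or simple. The radical $\rad M$ is the intersection of maximal submodules, so it is preserved by $F$. The properties of being uniserial (a chain submodule lattice), simple, or zero are also preserved, as are sums and intersections of submodules, since $F$ induces a lattice isomorphism $\mathrm{Sub}(M)\cong\mathrm{Sub}(F M)$. Moreover $F$ restricts to a bijection on isoclasses of indecomposable projectives. So the right-hand condition transfers. For left modules I would use the induced equivalence $P\otimes_{\Lambda'}-$ between left module categories; equivalently, Morita equivalence of $\Lambda,\Lambda'$ implies that of $\Lambda^{\mathrm{op}},\Lambda'^{\mathrm{op}}$. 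The same argument then applies there.

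The main obstacle is only bookkeeping. One has to justify that an abstract Morita equivalence restricts to finitely generated modules and induces subobject-lattice isomorphisms, and one has to handle the left-module side. I would cite the standard Morita theorem from \cite{ASS06} for the finitely generated restriction and for the fact that $\Lambda^{\mathrm{op}}$ and $\Lambda'^{\mathrm{op}}$ are Morita equivalent. The remainder is formal.
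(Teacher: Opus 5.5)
Your proposal is correct and follows essentially the same route as the paper. You transport projective covers and minimal resolutions along the equivalence $F$ to get $\pd_{\Lambda'}F(M)=\pd_\Lambda M$, and you transfer the biserial condition through preservation of radicals, uniseriality, sums and intersections of submodules, handling left modules via opposite algebras; the only cosmetic difference is that you take the supremum over all finitely generated modules using essential surjectivity of $F$, whereas the paper restricts to simple modules and invokes Lemma~\ref{lem:simplegldim}.
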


\begin{proof}
Let \(F : \operatorname{mod}\Lambda \to \operatorname{mod}\Lambda'\)
be an equivalence of module categories. Such an equivalence
preserves projective modules, projective covers, exact kernels, and
finite direct sums, and it carries the composition length of a
module to the composition length of the image. It follows that for
every right $\Lambda$-module $M$, the projective dimension of $M$
equals the projective dimension of $F(M)$: the minimal projective
resolution of $M$ is carried by $F$ to a minimal projective
resolution of $F(M)$ of the same length. Consequently, the
supremum of the projective dimensions of the simple modules is
preserved, and by Lemma~\ref{lem:simplegldim} the global dimension
is preserved. The finitistic dimension, being the supremum of the
projective dimensions of the modules of finite projective
dimension, is likewise preserved.

For the biserial assertion, suppose $\Lambda$ is biserial. We use
the two-sided condition of Definition~\ref{def:biserial}: for every
indecomposable projective right $\Lambda$-module $P$,
$\rad P = U + V$ with $U$, $V$ uniserial and $U \cap V$ zero or
simple, and analogously for left projectives.

The equivalence $F$ carries $P$ to an indecomposable projective
right $\Lambda'$-module $F(P)$, since $F$ preserves projectives and
indecomposability. Because $F$ is an equivalence of categories, it
preserves maximal submodules and their intersections; in
particular $F(\rad P) = \rad F(P)$. It carries $U$, $V$ to
submodules $F(U)$, $F(V)$ of $\rad F(P)$ and preserves their
intersection. Uniseriality is preserved, since the linear
submodule lattice of a module is preserved by an equivalence.
Therefore $\rad F(P) = F(U) + F(V)$ with $F(U)$, $F(V)$ uniserial
and $F(U) \cap F(V)$ zero or simple.

For left projectives, note that a Morita equivalence on right
modules induces one on left modules by taking opposite algebras, so
the preceding argument applies to both sides. Therefore $\Lambda'$
is biserial.
\end{proof}

\begin{remark}\label{rem:basicrepresentative}
By Proposition~\ref{prop:moritainvariance}, we may replace a
biserial algebra by its basic representative when computing the
global or the finitistic dimension. The basic representative is a
bound quiver algebra, so the bound quiver presentation is available
for computations.
\end{remark}

\begin{remark}\label{rem:reachablequiverdepends}
The reachable syzygy quiver $\Gamma_\Omega(\Lambda)$ depends on the
choice of the basic representative: different basic representatives
of the same Morita class are isomorphic as algebras, and the quiver
is defined only up to isomorphism. The quiver is not an absolute
invariant of the Morita class in the sense of admitting a
description independent of the presentation. What is Morita
invariant is the height of the simple states and the resulting
supremum, not the specific combinatorial presentation of the graph.
\end{remark}

\section{Examples}\label{sec:examples}

In this section we collect several examples, accompanied by quiver
diagrams, that illustrate the strict inclusions
\[
\text{gentle} \subsetneq \text{string} \subsetneq
\text{special biserial} \subsetneq \text{biserial},
\]
the homological difference between monomial and binomial relations,
and the failure of the string-state compression for general biserial
algebras. Examples~\ref{ex:infinite}, \ref{ex:binomialexample}, and
\ref{ex:biserialexample} refer to
Sections~\ref{sec:gabriel}, \ref{sec:binomial}, and
\ref{sec:biserial} for the detailed computations. Examples
\ref{ex:gentle}, \ref{ex:stringnotgentle}, and
\ref{ex:symmetric} are proved in this section.

\subsection{Gentle algebras and non-gentle string algebras}
\label{subsec:gentlestring}

The following two examples exhibit the strict inclusion
$\text{gentle} \subsetneq \text{string}$. Both algebras have global
dimension two, so the class inclusion does not correlate with the
global dimension.

\begin{example}[A gentle algebra]\label{ex:gentle}
Consider the algebra $\Lambda := \mathbf{k}Q/(\alpha\beta)$, where
$Q$ is the linear quiver of Figure~\ref{fig:twocycle_gentle}.

\vspace{-1.2em}
\begin{figure}[h]
\centering
\begin{tikzpicture}[scale=1.4,>=Stealth]
\node (1) at (0,0) {$1$};
\node (2) at (1.5,0) {$2$};
\node (3) at (3,0) {$3$};
\draw[->] (1) -- node[above] {$\alpha$} (2);
\draw[->] (2) -- node[above] {$\beta$} (3);
\end{tikzpicture}
\caption{The quiver $Q$ of the gentle algebra $\Lambda$.}
\label{fig:twocycle_gentle}
\end{figure}

Then $\Lambda$ is a gentle algebra, and $\gldim\Lambda = 2$.
$\triangle$
\end{example}

\begin{proof}
The ideal $I = (\alpha\beta)$ is monomial, so $\Lambda$ is a string
algebra. Condition \textup{(SB1)} holds because every vertex of $Q$
has out-degree and in-degree at most one. For condition
\textup{(SB2)}: the only composable pair of arrows in $Q$ is
$\alpha\beta$, which lies in $I$; consequently no arrow of $Q$
admits a continuation or a predecessor outside $I$, and
\textup{(SB2)} holds vacuously. For the gentle condition of
Definition~\ref{def:gentle}, the unique arrow $\beta'$ with
$\alpha\beta' \in I$ is $\beta$, and the unique arrow $\gamma$ with
$\gamma\beta \in I$ is $\gamma = \alpha$. Hence $\Lambda$ is gentle.

The indecomposable projective right $\Lambda$-modules have
$\mathbf{k}$-bases
\[
P_1 = \langle e_1, \alpha\rangle, \qquad
P_2 = \langle e_2, \beta\rangle, \qquad
P_3 = \langle e_3\rangle.
\]
The radicals are
\[
\rad P_1 = \mathbf{k}\alpha \cong S_2, \qquad
\rad P_2 = \mathbf{k}\beta \cong S_3 = P_3, \qquad
\rad P_3 = 0.
\]
Hence $\Omega_\Lambda S_1 \cong S_2$,
$\Omega_\Lambda S_2 \cong S_3 = P_3$, and
$\Omega_\Lambda S_3 = 0$. The minimal projective resolution of $S_1$
is
\[
0 \to P_3 \to P_2 \to P_1 \to S_1 \to 0,
\]
which gives $\pd_\Lambda S_1 = 2$, $\pd_\Lambda S_2 = 1$,
$\pd_\Lambda S_3 = 0$, and $\gldim\Lambda = 2$ by
Lemma~\ref{lem:simplegldim}.
\end{proof}

\begin{example}[A string algebra that is not gentle]
\label{ex:stringnotgentle}
Consider the algebra $\Lambda' := \mathbf{k}Q/(\alpha\beta_1,
\alpha\beta_2)$, where $Q$ is the quiver of
Figure~\ref{fig:stringnotgentle}.

\vspace{-1.2em}
\begin{figure}[h]
\centering
\begin{tikzpicture}[scale=1.25,>=Stealth]
\node (1) at (0,0) {$1$};
\node (2) at (1.5,0) {$2$};
\node (3) at (3,0) {$3$};
\draw[->] (1) -- node[above] {$\alpha$} (2);
\draw[->] (2) to[bend left=25] node[above] {$\beta_1$} (3);
\draw[->] (2) to[bend right=25] node[below] {$\beta_2$} (3);
\end{tikzpicture}
\caption{The quiver $Q$ of the non-gentle string algebra
$\Lambda'$.}
\label{fig:stringnotgentle}
\end{figure}

Then $\Lambda'$ is a string algebra that is not gentle, and
$\gldim\Lambda' = 2$. $\triangle$
\end{example}

\begin{proof}
The algebra $\Lambda'$ is a string algebra: condition \textup{(SB1)}
is immediate from the shape of $Q$, and condition \textup{(SB2)}
holds vacuously because the only composable pairs of arrows in $Q$
are $\alpha\beta_1$ and $\alpha\beta_2$, both of which lie in the
monomial ideal $I$. It is not gentle, since the arrow $\alpha$
admits the two continuations $\beta_1,\beta_2$ with
$\alpha\beta_i \in I$, so the uniqueness condition of
Definition~\ref{def:gentle} fails.

The indecomposable projective right $\Lambda'$-modules have
$\mathbf{k}$-bases
\[
P_1 = \langle e_1, \alpha\rangle, \qquad
P_2 = \langle e_2, \beta_1, \beta_2\rangle, \qquad
P_3 = \langle e_3\rangle.
\]
The radicals are
\[
\rad P_1 = \mathbf{k}\alpha \cong S_2, \qquad
\rad P_2 = \mathbf{k}\beta_1 \oplus \mathbf{k}\beta_2
\cong S_3^{\oplus 2}, \qquad
\rad P_3 = 0.
\]
Here each $\mathbf{k}\beta_i$ is one-dimensional and annihilated by
the radical of $\Lambda'$, so $\mathbf{k}\beta_i \cong S_3$. Hence
$\Omega_{\Lambda'} S_1 \cong S_2$,
$\Omega_{\Lambda'} S_2 \cong S_3^{\oplus 2}$ (projective), and
$\Omega_{\Lambda'} S_3 = 0$. The minimal projective resolution of
$S_1$ is
\[
0 \to P_3^{\oplus 2} \to P_2 \to P_1 \to S_1 \to 0,
\]
which gives $\pd_{\Lambda'} S_1 = 2$, $\pd_{\Lambda'} S_2 = 1$,
$\pd_{\Lambda'} S_3 = 0$, and $\gldim\Lambda' = 2$.
\end{proof}

\begin{remark}\label{rem:gentlestringcomparison}
Examples~\ref{ex:gentle} and~\ref{ex:stringnotgentle} exhibit the
strict inclusion $\text{gentle} \subsetneq \text{string}$. Both
algebras have global dimension two, so the structural hierarchy is
not detected by the global dimension alone.
\end{remark}

\subsection{A string algebra with infinite global dimension}
\label{subsec:exampleinfinite}

\begin{example}\label{ex:infinite}
Consider the algebra $B = \mathbf{k}Q/(\alpha\beta, \beta\alpha)$ of
Theorem~\ref{thm:twostring}, where $Q$ is the oriented $2$-cycle of
Figure~\ref{fig:twocycle}. Then $B$ is a string algebra with
$\gldim B = \infty$ and $\findim B = 0$. $\triangle$
\end{example}

\begin{proof}
The algebra $B$ is a string algebra by
Theorem~\ref{thm:twostring}, where the direct computation of the
periodic resolution of $S_1$ gives $\gldim B = \infty$ and the
conclusion $\findim B = 0$ is obtained from the classification of
indecomposable $B$-modules. Alternatively, the relation-chain quiver
$\mathcal{R}(B)$ has vertices $\alpha$, $\beta$ and arrows
$\alpha \to \beta$, $\beta \to \alpha$, since
$\alpha\beta = \beta\alpha = 0$ in $B$. This is a directed $2$-cycle,
so Theorem~\ref{thm:zhengzhang} gives $\gldim B = \infty$, and
Remark~\ref{rem:findimstringnaive} gives $\findim B = 0$.
\end{proof}

\subsection{A special biserial algebra with a binomial relation}
\label{subsec:examplebinomial}

\begin{example}\label{ex:binomialexample}
Consider the commutativity diamond $A = \mathbf{k}Q/(ac - bd)$ of
Example~\ref{ex:commutativediamond}, with quiver $Q$ as in
Figure~\ref{fig:diamond}. Then $A$ is a special biserial algebra that
is not a string algebra, and $\gldim A = 2$. $\triangle$
\end{example}

\begin{proof}
The proof of Example~\ref{ex:commutativediamond} shows that $A$ is
special biserial and not a string algebra, and computes
$\gldim A = 2$ by an explicit syzygy computation. The string-syzygy
quiver $\Gamma_\Omega^{\mathrm{str}}(A)$ has vertices $[S_1]$,
$[S_2]$, $[S_3]$, and $[\rad P_1]$, with a single arrow
$[S_1] \to [\rad P_1]$ and with $[S_2]$, $[S_3]$, and
$[\rad P_1]$ acting as sinks because
$\Omega_A(\rad P_1) \cong S_4 = P_4$ is projective.
Theorem~\ref{thm:gldimspecialbis} yields
$\gldim A = 1 + 1 = 2$.
\end{proof}

\subsection{A symmetric special biserial algebra}
\label{subsec:examplesymmetric}

\begin{example}\label{ex:symmetric}
Let $Q$ be the quiver of Figure~\ref{fig:twoloops} with a single
vertex $1$ and two loops $\alpha$, $\beta$, and let
\[
\Lambda := \mathbf{k}Q/(\alpha^2, \beta^2, \alpha\beta - \beta\alpha).
\]
Then $\Lambda$ is a symmetric special biserial algebra with
$\gldim\Lambda = \infty$ and $\findim\Lambda = 0$. $\triangle$
\end{example}

\vspace{-1.5em}
\begin{figure}[h]
\centering
\begin{tikzpicture}[scale=1.0,>=Stealth]
\node (1) at (0,0) {$1$};
\draw[->] (1) to[out=135,in=225,looseness=10] node[left] {$\alpha$} (1);
\draw[->] (1) to[out=45,in=-45,looseness=10] node[right] {$\beta$} (1);
\end{tikzpicture}
\caption{The two-loop quiver $Q$ of the symmetric special biserial
algebra $\Lambda$.}
\label{fig:twoloops}
\end{figure}

\begin{proof}
Vertex $1$ is the source and target of exactly two arrows, so
\textup{(SB1)} holds. Since $\alpha^2, \beta^2 \in I$ while
$\alpha\beta \notin I$, the arrow $\alpha$ has $\beta$ as its unique
continuation and unique predecessor outside $I$, and symmetrically
for $\beta$; hence \textup{(SB2)} holds and $\Lambda$ is special
biserial. The relations imply
\[
(\alpha\beta)^2 = \alpha\beta\alpha\beta
= \alpha(\alpha\beta)\beta
= \alpha(\beta\alpha)\beta
= \alpha^2\beta^2 = 0,
\]
so $\Lambda$ has $\mathbf{k}$-basis
$\{e_1, \alpha, \beta, \alpha\beta\}$ and dimension four. The
radical is $\rad\Lambda = \langle\alpha, \beta\rangle$, with
$\rad^2\Lambda = \langle\alpha\beta\rangle$ and
$\rad^3\Lambda = 0$. The socle is
$\soc\Lambda = \langle\alpha\beta\rangle$, which is one-dimensional.

Since $\Lambda$ is a finite dimensional local algebra with
one-dimensional socle, it is a Frobenius algebra. Because $\Lambda$
is commutative, it is symmetric. By
Corollary~\ref{cor:symmetricinfinite}, $\gldim\Lambda = \infty$ and
$\findim\Lambda = 0$.
\end{proof}

\begin{remark}\label{rem:symmetricexamplebasis}
The algebra $\Lambda$ of Example~\ref{ex:symmetric} is a local
commutative symmetric algebra whose radical requires two generators,
so it is not a Nakayama algebra. In the framework of
Section~\ref{subsec:liumorin}, a string diagram for $\Lambda$
contains no two consecutive identical direct syllables (since
$\alpha^2 = \beta^2 = 0$), and no consecutive direct syllables
$\alpha\beta$ or $\beta\alpha$ (since, at the base vertex of the
first syllable, the other factor of the binomial acts as zero,
contradicting the annihilation condition). By the symmetry of the
inverse subpaths, the string diagram also contains no two
consecutive inverse syllables. Hence every string diagram of length at least two strictly
alternates direct and inverse syllables. Up to cyclic permutation
and inversion, the unique band over $\Lambda$ is therefore
$\beta^{-1}\alpha$. Consequently, $\Lambda$ is a domestic symmetric
special biserial algebra. The conclusion $\findim\Lambda = 0$ stated
in Example~\ref{ex:symmetric} follows from self-injectivity: every
module of finite projective dimension over a self-injective algebra
is projective, so every nonprojective module has infinite projective
dimension.
\end{remark}

\subsection{A biserial non-special algebra}
\label{subsec:examplebiserial}

\begin{example}\label{ex:biserialexample}
Consider the Skowro\'nski--Waschb\"usch algebra
$C = \mathbf{k}Q/(be, ab - acd)$ of Example~\ref{ex:skowronski},
with quiver $Q$ as in Figure~\ref{fig:skowronski}. Then $C$ is a
biserial algebra that is not special biserial, and $\gldim C = 2$.
The first syzygy of $S_1$ over $C$ is a nonprojective module whose
coefficient graph contains a triangle, so it is not a string module.
$\triangle$
\end{example}

\begin{proof}
The proof of Example~\ref{ex:skowronski} verifies that $C$ is
biserial, that $C$ is not special biserial, and that
$\gldim C = 2$. The first syzygy
$\Omega_C S_1 = aC = \langle a, ac, z\rangle$, where
$z = ab = acd$, has nonzero arrow actions
\[
a \cdot c = ac, \qquad ac \cdot d = z, \qquad a \cdot b = z.
\]
Its coefficient graph is therefore the triangle with vertices $a$,
$ac$, $z$ and edges
\[
a \xrightarrow{c} ac, \qquad ac \xrightarrow{d} z, \qquad
a \xrightarrow{b} z.
\]
Since the basis elements $a$, $ac$, $z$ reside in distinct
components $Ce_2$, $Ce_3$, $Ce_4$ respectively (as determined by the
targets $2, 3, 4$ of the corresponding paths), this homogeneous basis
is unique up to scalar multiplication. Since the underlying
unoriented graph is rigidly a triangle and not a simple path, $aC$
cannot be a string module.
\end{proof}

\begin{remark}\label{rem:exampleconclusion}
The examples of
Sections~\ref{subsec:gentlestring}--\ref{subsec:examplebiserial}
together witness the strict inclusions
\[
\text{gentle} \subsetneq \text{string} \subsetneq
\text{special biserial} \subsetneq \text{biserial},
\]
and illustrate the homological phenomena discussed in
Sections~\ref{sec:gabriel}--\ref{sec:biserial}: the failure of the
Gabriel quiver to determine the global dimension, the passage from
string algebras to special biserial algebras, the failure of the
path-state compression for binomial relations, and the failure of
the string-state compression for biserial algebras that are not
special biserial.
\end{remark}

\section{Further directions and open problems}\label{sec:future}

In this paper we introduced the reachable syzygy quiver of a finite
dimensional algebra and showed that the projective dimension of a
vertex equals one plus its directed path height. For special
biserial algebras the reachable syzygy quiver admits a combinatorial
description in terms of string modules, and for string algebras it
compresses to the relation-chain quiver of Xin and Zhang. The
biserial case, in which string states are no longer sufficient, was
discussed in Section~\ref{sec:biserial}, and the negative result on
pointwise monomial reduction was proved in
Section~\ref{sec:binomial}. Several questions related to this
material remain open. We collect them in this section.

\emph{A combinatorial state space for biserial algebras.} The
Skowro\'nski--Waschb\"usch algebra (Example~\ref{ex:skowronski})
shows that string modules are not sufficient for biserial algebras
in general: the first syzygy of $S_1$ over this algebra is not a
string module. The string-syzygy quiver of
Section~\ref{sec:stringsyzygy} therefore has no analogue for general
biserial algebras. The following question asks whether some other
class of modules can serve as a state space.

\begin{problem}\label{prob:biserialcompression}
Let $\Lambda$ be a finite dimensional biserial algebra. Does there
exist a class $\mathcal{C}$ of finite dimensional right
$\Lambda$-modules, describable from the bound quiver, such that the
following hold?
\begin{enumerate}[label=\textup{(\alph*)}]
\item Every indecomposable nonprojective summand of
$\Omega_\Lambda^n S$ belongs to $\mathcal{C}$, for every simple
right $\Lambda$-module $S$ and every $n \ge 0$.
\item The first syzygy of every module in $\mathcal{C}$ decomposes
into a finite direct sum of projective modules and modules in
$\mathcal{C}$.
\item The projective covers, the radicals, and the syzygy
transitions of modules in $\mathcal{C}$ can be computed from the
bound quiver.
\end{enumerate}
If such a class exists, the global and finitistic dimensions of
$\Lambda$ can be computed from the bound quiver by the analog of
Algorithm~\ref{alg:breadthfirst} applied to the class $\mathcal{C}$.
\end{problem}

Example~\ref{ex:skowronski} shows that the class of string modules
does not satisfy the conditions of
Problem~\ref{prob:biserialcompression} for general biserial
algebras. Whether some other class $\mathcal{C}$ satisfies these
conditions is open.

\emph{Weak monomial reduction.} Proposition~\ref{prop:pointwisefails}
shows that no admissible monomial ideal on the same quiver
containing every monomial occurring in a relation of the original
algebra can achieve pointwise matching of the projective dimensions
in general. Once the requirement of containing every monomial of
the relations is dropped, the following questions remain open.

\begin{problem}\label{prob:weakmonomial}
Let $\Lambda = \mathbf{k}Q/I$ be a special biserial algebra with
$\gldim\Lambda = d < \infty$. Does there exist an admissible
monomial ideal $I'$ on $Q$ such that $\Lambda' = \mathbf{k}Q/I'$ is
a string algebra and $\gldim\Lambda' = d$? More strongly, does
there exist such $I'$ with $\pd_{\Lambda'}(S_v) = \pd_\Lambda(S_v)$
for every $v \in Q_0$, or with
$\findim\Lambda' = \findim\Lambda$?
\end{problem}

As noted in Remark~\ref{rem:weakerproblem}, the algebra $A'$ of
Example~\ref{ex:parallelarrows} admits the monomial witness
$I' = \langle bd, cx\rangle$ on $Q'$, which discards the monomial
$ac$ and satisfies both $\pd_{\Lambda'}(S_v) = \pd_{A'}(S_v)$ for
all $v \in Q'_0$ and $\gldim(\mathbf{k}Q'/I') = 2 = \gldim A'$. It is not known whether such a monomial witness exists for every
special biserial algebra.

\emph{Derived invariance.} Section~\ref{sec:gabriel} shows that two
string algebras with the same Gabriel quiver may have different
global dimensions, and Corollary~\ref{cor:notderived} shows that
the two examples are not derived equivalent. Derived equivalence
preserves the finiteness of the global dimension, but the value
itself may change.

\begin{problem}\label{prob:derivedinvariant}
Let $\Lambda$ and $\Lambda'$ be finite dimensional special biserial
algebras that are derived equivalent. What are the sharpest bounds
on $|\gldim\Lambda - \gldim\Lambda'|$ in terms of the structure of
the tilting complex? Is $\findim\Lambda = \findim\Lambda'$?
\end{problem}

For gentle algebras, the derived category determines the surface
model up to homeomorphism, and the finiteness of the global
dimension corresponds to the absence of unmarked boundary
components. It would be interesting to know whether the exact
values of the global and finitistic dimensions are also determined
by the surface model.

\emph{Computational complexity.}
Algorithm~\ref{alg:breadthfirst} computes the global dimension of a
special biserial algebra when the string-syzygy quiver is finite
and acyclic, and detects an infinite global dimension when a
directed cycle is reachable from a simple state. When the
string-syzygy quiver is infinite and acyclic, the algorithm does
not terminate, although the global dimension is infinite by
Theorem~\ref{thm:gldimspecialbis}.

\begin{problem}\label{prob:complexity}
What is the worst-case computational complexity of constructing the
string-syzygy quiver of a finite dimensional special biserial
algebra from its bound quiver presentation? Is there a
polynomial-time algorithm? Is the problem of deciding whether the
global dimension is finite decidable in polynomial time?
\end{problem}

The question is open even for the class of string algebras, where
the relation-chain quiver replaces the string-syzygy quiver. The
size of the relation-chain quiver is bounded by the number of paths
in $Q$ that survive modulo $I$, which can be exponential in
$|Q_1|$ in the worst case. Whether this bound can be improved using
the structure of the algebra is not known.

\emph{Other classes of algebras.} The string-syzygy compression of
Section~\ref{sec:stringsyzygy} uses conditions \textup{(SB1)} and
\textup{(SB2)} of Definition~\ref{def:specialbis}. Several
neighbouring classes of algebras share some of the properties of
special biserial algebras, but the compression does not extend to
them.

\begin{problem}\label{prob:skewgentle}
Let $\Lambda$ be a skew-gentle, semi-gentle, or folded gentle
algebra. Does there exist a geometric criterion for the finiteness
of the global dimension of $\Lambda$?
\end{problem}

For skew-gentle algebras, the surface models suggest that such a
criterion should exist, but the algebraic translation has not been
carried out. Semi-gentle algebras arise from gentle algebras by
allowing relations of length at least three, and folded gentle
algebras arise from group actions on gentle algebras. The universal
formula of Theorem~\ref{thm:universalheight} applies to all these
classes because the algebras are finite dimensional, but no
combinatorial compression of the reachable syzygy quiver to a
smaller string-like state space is known.

\emph{The stable rank.} The stable rank of a module category is the
smallest ordinal $\lambda$ such that
$\operatorname{rad}_\Lambda^\lambda
= \operatorname{rad}_\Lambda^{\lambda+1}$, where
$\operatorname{rad}_\Lambda$ is the radical of the module category.
In \cite[Corollary~4.3.4]{GKS21} it is shown that the stable rank of
a meta-torsion-free string algebra takes values in
$\{\omega, \omega+1, \omega+2\}$, and in
\cite[Conjecture~4.4.1]{GKS21} it is conjectured that the stable
rank of any special biserial algebra with at least one band
satisfies
$\omega \le \operatorname{st}(\Lambda) < \omega^2$. The
string-syzygy quiver determines the global and finitistic
dimensions by Theorems~\ref{thm:gldimspecialbis}
and~\ref{thm:findimspecialbis}. It is natural to ask whether it also
determines the stable rank.

\begin{problem}\label{prob:stablerank}
Is there a relationship between the stable rank of the module
category of a special biserial algebra and the height function of
its string-syzygy quiver? Does the string-syzygy quiver determine
the stable rank in the same way that it determines the global and
finitistic dimensions?
\end{problem}

The methods of \cite{GKS21} rely on the prime-band version of the
bridge quiver and on recursive systems for showing that graph maps
lie in the stable radical. It would be interesting to know whether
the string-syzygy quiver, which determines the projective
dimension, can be used to recover these results and to extend them
to the class of special biserial algebras.

\vspace{-0.3em}
\section*{Acknowledgements}

The author gratefully acknowledges the members of Centro Marplatense de
Investigaciones Matem\'aticas (CEMIM) at Universidad Nacional de Mar del Plata (UNMDP), especially Prof.\
Sonia Trepode, for stimulating and generous discussions on
homological aspects in the representation theory of algebras, which
have greatly enriched this work. The author is also indebted to the
PICT 202101154 ANPCyT project ``Dimer Models, $\tau$-tilting
Theory, Trivial Extensions and Brauer Configurations'' at FECyN,
UNMDP, for invaluable assistance.


\begin{thebibliography}{99}

\bibitem[ASS06]{ASS06}
I.~Assem, D.~Simson, and A.~Skowro\'nski,
\emph{Elements of the Representation Theory of Associative Algebras,
Vol.~1: Techniques of Representation Theory},
London Mathematical Society Student Texts \textbf{65},
Cambridge University Press, Cambridge, 2006.

\bibitem[BR87]{BR87}
M.~C.~R. Butler and C.~M.~Ringel,
Auslander--Reiten sequences with few middle terms and applications to
string algebras,
\emph{Comm. Algebra} \textbf{15} (1987), no.~1--2, 145--179.

\bibitem[CB98]{CB98}
W.~W. Crawley-Boevey,
Infinite-dimensional modules in the representation theory of
finite-dimensional algebras,
in: \emph{Algebras and Modules, I} (Trondheim, 1996),
CMS Conf. Proc. \textbf{23},
Amer. Math. Soc., Providence, RI, 1998, pp.~29--54.

\bibitem[EHIS04]{EHIS04}
K.~Erdmann, T.~Holm, O.~Iyama, and J.~Schr\"oer,
Radical embeddings and representation dimension,
\emph{Adv. Math.} \textbf{185} (2004), no.~1, 159--177.

\bibitem[GKS21]{GKS21}
E.~Gupta, A.~Kuber, and S.~Sardar,
On the stable radical of some non-domestic string algebras,
\emph{Algebr. Represent. Theory} \textbf{25} (2022), no.~5,
1207--1230.

\bibitem[Kra91]{Kra91}
H.~Krause,
Maps between tree and band modules,
\emph{J. Algebra} \textbf{137} (1991), no.~1, 186--194.

\bibitem[Lak16]{Lak16}
R.~Laking,
\emph{String algebras in representation theory},
Ph.D. thesis, University of Manchester, 2016.

\bibitem[LM04]{LM04}
S.~Liu and J.-P. Morin,
The strong no loop conjecture for special biserial algebras,
\emph{Proc. Amer. Math. Soc.} \textbf{132} (2004), no.~12,
3513--3523.

\bibitem[Sch15]{Sch15}
S.~Schroll,
Trivial extensions of gentle algebras and Brauer graph algebras,
\emph{J. Algebra} \textbf{444} (2015), 183--200.

\bibitem[SW83]{SW83}
A.~Skowro\'nski and J.~Waschb\"usch,
Representation-finite biserial algebras,
\emph{J. Reine Angew. Math.} \textbf{345} (1983), 172--181.

\bibitem[VFCB98]{VFCB98}
R.~Vila-Freyer and W.~Crawley-Boevey,
The structure of biserial algebras,
\emph{J. London Math. Soc. (2)} \textbf{57} (1998), no.~1, 41--54.

\bibitem[WW85]{WW85}
B.~Wald and J.~Waschb\"usch,
Tame biserial algebras,
\emph{J. Algebra} \textbf{95} (1985), no.~2, 480--500.

\bibitem[ZZ26]{ZZ26}
Z.~Xin and L.~Zhang,
Global dimension of a string algebra,
\emph{Arch. Math. (Basel)} (2026),
\url{https://doi.org/10.1007/s00013-026-02262-x}.

\end{thebibliography}
\end{document}